\newtheorem{thm}{Theorem}[section]
\newtheorem{proposition}{Proposition}[section]
\newtheorem{example}{\bf Example}[section]
\newtheorem{lemma}{\bf Lemma}[section]
\newtheorem{definition}{Definition}[section]
\numberwithin{equation}{section}
\newtheorem{assumption}{Assumption}[section]
\begin{document}
	
	\baselineskip=17pt
	
	\title[]
	{ Zero-Sum Games for Continuous-time Markov decision processes with risk-sensitive average cost criterion}
	
	\author[M. K. Ghosh]{Mrinal K. Ghosh}
\address{ Department of Mathematics\\
	Indian Institute of Science\\
	Bangalore-560012, India.}
\email{mkg@iisc.ac.in}

\author[S. Golui]{Subrata Golui}
\address{Department of Mathematics\\
	Indian Institute of Technology Guwahati\\
	Guwahati, Assam, India}
\email{golui@iitg.ac.in}

\author[C. Pal]{Chandan Pal}
\address{Department of Mathematics\\
	Indian Institute of Technology Guwahati\\
	Guwahati, Assam, India}
\email{cpal@iitg.ac.in}

\author[S. Pradhan]{Somnath Pradhan}
\address{Department of Mathematics and Statistics\\
	Queen's University\\
	 Kingston, Ontario-K7L 3N6, Canada}
\email{sp165@queensu.ca}

	
	\date{}
	
	\begin{abstract}
		\vspace{2mm}
		\noindent
		 We consider zero-sum stochastic games for continuous time Markov decision processes with risk-sensitive average cost criterion. Here the transition and cost rates may be unbounded. We prove the existence of the value of the game and a saddle-point equilibrium in the class of all stationary strategies under a Lyapunov stability condition. This is accomplished by  establishing the existence of a principal eigenpair for the corresponding Hamilton-Jacobi-Isaacs (HJI) equation. This in turn is established by using the nonlinear version of Krein-Rutman theorem. We then obtain a characterization of the saddle-point equilibrium in terms of the corresponding HJI equation. Finally, we use a controlled population system to illustrate  results.
		
		\vspace{2mm}
		
		\noindent
		{\bf Keywords:}
		Zero-sum game; risk-sensitive average cost criterion; history dependent strategy;  HJI equation; saddle point equilibrium.
		
	\end{abstract}
	
	\maketitle
	
\section{INTRODUCTION}
	 Markov decision processes (MDPs) are widely used for modeling control problems that arise naturally in many real-life problems, for example in queueing models, epidemiology models, birth-death models etc, see \cite{BR1}, \cite{GH4}, \cite{PZ1}, \cite{P}. When there is more than one controller (or player) the stochastic control problem is referred to as stochastic game problem. Stochastic dynamic game  was first introduced in \cite{SH} and has been studied extensively in the literature due to its immense applications; see \cite{BG}, \cite{BR}, \cite{GKP},  \cite{GP1}, \cite{GH3}, \cite{WC2}, \cite{W2}, \cite{ZG} and the references therein. In this article we consider the risk-sensitive ergodic zero-sum game for continuous-time Markov decision processes (CTMDPs). In zero-sum game, one player is trying to minimize her/his cost and the other player is trying to maximize the same. In literature, the expected average cost criterion is a commonly used optimality criterion in the theory of CTMDPs and has been widely studied under the different sets of optimality conditions;  for control problems see, \cite{GH4}, \cite{Z1} and the references therein; for game problems see \cite{GH1}, \cite{HL}, \cite{WC4} and the references therein. In these papers  the decision-makers are risk-neutral.  However, risk preferences may vary from person to person in the real-world applications. In order to address this concern one of the approaches that is available in the literature is risk-sensitive criterion. In this criterion one investigates the expectation of an exponential of the random quantity. This takes into account the attitude of the controller with respect to risk.  The performance of a pair of strategies is measured by risk-sensitive average cost criterion, which in our present case is defined by (\ref{eq 2.4}), below.  The analysis of risk-sensitive control is technically more involved because of the exponential nature of the cost. The risk-sensitive average cost stochastic optimal control problems for CTMDPs were first considered in \cite{GS2} and have been studied extensively in the literature due to its applications in finance and large deviation theory. Recently, there has been an extensive work on risk-sensitive average cost criterion problems for CTMDPs; see, for example  \cite{BP}, \cite{GYH}, \cite{KP1}, \cite{KP2}, \cite{PP} and the references therein. The risk-sensitive stochastic zero-sum games for MDPs have been studied in [\cite{BG}, \cite{BR}, \cite{GKP}, \cite{GP1}, \cite{W2}] and [\cite{BG1}, \cite{K1}, \cite{WC5}] consider the nonzero-sum games for MDPs.
	 In [\cite{BG}, \cite{BR}], the authors study zero-sum risk-sensitive stochastic games for discrete time MDPs with bounded cost. Both of the papers considered first the discounted cost and then ergodic cost. In \cite{BR}, the authors extended the results of \cite{BG} to the general state space case. The zero-sum risk-sensitive average games have been studied in \cite{GKP} and  discounted risk-sensitive zero-sum games were studied in  \cite{PP1} for CTMDPs with bounded cost and transition rates. But this boundedness requirement restricts our domain of application, since in many real-life situations we see that the reward/cost and transition rates are unbounded as for example in queueing, telecommunication and population processes. In \cite{GP1} and \cite{W2}, the authors study finite horizon zero-sum risk-sensitive continuous-time stochastic games. In \cite{GP1}, unbounded costs and transition rates are considered while \cite{W2} considers unbounded transition but bounded cost. The  discounted risk-sensitive zero-sum game for CTMDPs was studied in \cite{GP2} with unbounded cost and transition rates.
	
Here we study  zero-sum ergodic risk-sensitive stochastic games for CTMDPs having the following features: (a) transition and cost rates may be unbounded (b) state space is countable (c) at any state of the system the space of admissible actions is compact (d) the strategies may be history dependent. To the best of our knowledge, this is the first work which deals with infinite horizon continuous-time zero-sum risk-sensitive stochastic games for ergodic criterion on countable state space for unbounded transition and cost rates.  Under a Lyapunov stability condition,  we prove the existence of a saddle-point equilibrium in the class of stationary strategies. Using Krein-Rutman theorem, we first prove that the corresponding HJI equation has a unique solution for any finite subset of the state space. Then using the Lyapunov stability condition, we establish the existence of a unique solution for the corresponding HJI equation on the whole state space. Also we give a complete characterization of saddle point equilibrium in terms of the corresponding HJI equation.
	
The rest of this article is arranged as follows. Section 2 gives the description of the problem and assumptions. We also show in this section that the required risk-sensitive optimality equation (HJI equation) has a solution. In Section 3, we completely characterize all possible saddle point equilibria in the class of stationary Markov strategies. In Section 4, we present an illustrative example.

	\section{The game model}
	In this section we introduce the continuous-time zero-sum stochastic game model described by the following elements
	\begin{equation}
	\{S, A,B, (A(i)\subset A, B(i)\subset B,i\in S),q( \cdot|i, a,b),c(i, a,b)
	\}, \label{eq 2.1}
	\end{equation}
where
	\begin{itemize}
		\item  $S$, called the state space, is the set of all nonnegative integers. 	
		\item   $A$ and $B$ are the action sets for players 1 and 2, respectively. The action spaces $A$ and $B$ are assumed to be Borel spaces with the Borel $\sigma$-algebras $\mathcal{B}(A)$ and $\mathcal{B}(B)$, respectively.
		\item  For each $i\in S$, $A(i)\in \mathcal{B}(A)$ and $B(i)\in \mathcal{B}(B)$ denote the sets of admissible actions for players 1 and 2 in state $i$, respectively. Let $K:=\{(i, a,b)|i\in S, a\in A(i), b\in B(i)\}$, which is a Borel subset of $S\times A\times B$.
	Throughout this paper, we assume that the admissible action spaces $A(i)(\subset A)$ and $B(i)(\subset B)$ are compact for each $i$.
		\item Given any $(i, a,b)\in K$, the transition rate $q(j | i, a,b)$ is a signed kernel on $S$ such that  $q(j | i, a,b)\geq 0 $ for all $j, i\in S$ with $j\neq i$. Moreover, we assume that $q(j | i, a,b)$ satisfies the following conservative and stable conditions:
			for any $i\in S,$
		\begin{align}
		&\sum_{j\in S}q(j|i,a,b)=0~\text{for all}~(a,b)\in A(i)\times B(i) ~~~\text{and}\nonumber\\
		&~q^{*}(i):=\sup_{(a,b)\in A(i)\times B(i)}q(i,a,b)<\infty,\label{eq 2.2}
		\end{align}
		where $q(i,a,b):=-q(i | i, a,b)\geq 0.$
		\item Finally, the measurable function $c:K \to \mathbb{R}_{+}$ denotes the cost rate (representing cost for player 2 and payoff for player 1).
	\end{itemize}
		The game evolves as follows. The players observe continuously the current state of the system. When the system is in state $i\in S$ at time $t\geq0$, the players independently choose actions $a_t\in A(i)$ and  $b_t\in B(i)$ according to some strategies, respectively. As a consequence of this, the following happens:
	\begin{itemize}
		\item player 2 pays an immediate cost at rate $c(i, a_t, b_t )$ to player 1;
		\item the system stays in state $i$ for a random time, with rate of leaving $i$ given by $q(i,a_t,b_t)$, and then jumps to a new state $j\neq i$ with the probability determined by $\dfrac{q(\cdot|i,a_t,b_t)}{q(i,a_t,b_t)}$ (see Proposition in [\cite{GH4}, p. 205] for details).
	\end{itemize}
	When the state of the system transits to a new state $j$, the above procedure is repeated.
	
	The goal of player 2 is to minimize his/her accumulated cost, whereas  player 1 tries to maximize the same with respect to some performance criterion $J(\cdot,\cdot, \cdot,\cdot)$, which in our present case is defined by (\ref{eq 2.4}), below. Such a model is relevant in worst-case scenarios, e.g., in financial applications when a risk-averse investor is trying to maximize his long-term portfolio gain against the market which, by default, is the minimizer in this case.
	
	To formalize what is described above, below we describe the construction of continuous time Markov decision processes (CTMDPs) under possibly admissible history-dependent strategies.
	To construct the underlying CTMDPs (as in [\cite{GP}, \cite{K}, \cite{PZ}]), we introduce some notations: let $S_\Delta:=S \cup \{\Delta\}$ (with some $\Delta \notin S$), $\Omega^0:=(S\times(0,\infty))^\infty$, $\Omega:=\Omega^0 \cup \{(i_0, \theta_1, i_1, \cdots ,\theta_k, i_k, \infty, \Delta, \infty,\Delta, \cdots)| i_l \in S, \theta_l\in (0,\infty),$ \text{for each} $0\leq l\leq k, k\geq 0\}$, and let $\mathscr{F}$ be the Borel $\sigma$-algebra on $\Omega$. Then we obtain the measurable space $(\Omega, \mathscr{F})$.
	For each $k\geq 0$, $ \omega:=(i_0, \theta_1, i_1, \cdots , \theta_k, i_k, \cdots)\in \Omega,$ define $T_0(\omega):=0$, $T_k(\omega):= T_{k-1}(\omega)+\theta_{k}$, $T_\infty(\omega):=\lim_{k\rightarrow\infty}T_k(\omega)$. Using $\{T_k\}$, we define the state process $\{\xi_t\}_{t\geq 0}$ as
	\begin{equation}\label{eq 2.3}
	\xi_t(\omega):=\sum_{k\geq 0}I_{\{T_k\leq t<T_{k+1}\}}i_k+ I_{\{t\geq T_\infty\}}\Delta, \text{ for } t\geq 0.	
	\end{equation}
	Here, $I_{E}$ denotes the indicator function of a set $E$, and we use the convention that $0+z=:z$ and $0\cdot  z=:0$ for all $z\in S_\Delta$. The process after $T_\infty$ is regarded to be absorbed in the state $\Delta$. Thus, let $q(\cdot | \Delta, a_\Delta,b_\Delta):\equiv 0$, $A_\Delta:=A\cup \{a_\Delta\}$, $B_\Delta:=B\cup \{b_\Delta\}$, $ A(\Delta):=\{a_\Delta\}$, $B(\Delta):=\{b_\Delta\}$, $c(\Delta, a,b):\equiv 0$ for all $(a,b)\in A_\Delta\times B_\Delta$, where $a_\Delta$, $b_\Delta$ are isolated points. Moreover, let $\mathscr{F}_t:=\sigma(\{T_k\leq s,\xi_{T_k}\in D\}:D\in \mathcal{B}(S),0\leq s\leq t, k\geq0)$ for all $t\geq 0$, $\mathscr{F}_{s-}=:\bigvee_{0\leq t<s}\mathscr{F}_t$, and $\mathscr{P}:=\sigma(\{A\times \{0\},A\in \mathscr{F}_0\} \cup \{ B\times (s,\infty),B\in \mathscr{F}_{s-}\})$ which denotes the $\sigma$-algebra of predictable sets on $\Omega\times [0,\infty)$ related to $\{\mathscr{F}_t\}_{t\geq 0}$.
	
	In order to define the risk sensitive cost criterion, we need to introduce the definition of strategy below.

\begin{definition}
	An admissible history-dependent strategy for player 1, denoted by $\pi^1$, is determined by a sequence $\{\pi _k^1,k\geq 0\}$ of stochastic kernel on $A$ such that
	\begin{align*}
	\pi^1(da | \omega,t)&=I_{\{t=0\}}(t)\pi _0^1(da|i_0, 0)+\sum_{k\geq 0}I_{\{T_k< t\leq T_{k+1}\}}\pi^1 _k(da|i_0, \theta_1, i_1, \dots , \theta_k, i_k, t-T_k)\notag\\
	&+I_{\{t\geq  T_\infty\}}\delta_{a_\Delta}(da),
	\end{align*}
	where $\pi _0^1(da|i_0, 0)$ is a stochastic kernel on $A$ given $S$ such that $\pi _0^1(A(i_0)|i_0, 0)=1$, $\pi^1 _k (k\geq 1)$ are stochastic kernels on $A$ given $(S\times (0,\infty))^{k+1}$ such that $\pi _k^1(A(i_k)|i_0,\theta_1,i_1,\cdots,\theta_k,i_k,t-T_k )=1$, and $\delta_{a_\Delta}(da)$ denotes the Dirac measure at the point $a_\Delta$.
\end{definition}
The set of all admissible history-dependent strategies for player 1 is denoted by $\Pi^1$.
A strategy $\pi^1\in \Pi^1$ for player 1, is called a Markov if $\pi^1(da | \omega,t)=\pi^1(da | \xi_{t-}(w),t)$ for every $w\in \Omega$ and $t\geq 0$, where $\xi_{t-}(w):=\lim_{s\uparrow t}\xi_s(w)$. A Markov stragegy $\pi^1$ is called a stationary Markov strategy if $\pi^1$ does not have explicit dependence on time. We denote by  $\Pi^{m}_1$ and $\Pi^s_1$ the family of all Markov strategies and stationary Markov strategies, respectively, for player 1. The  sets of all admissible history-dependent strategies $\Pi^2$, all Markov strategies $\Pi^m_2$ and all stationary strategies $\Pi^s_2$ for player 2 are defined similarly.

For any compact metric space $Y$, let $P(Y)$ denote the space of probability measures on $\mathcal{B}(Y)$ with Prohorov topology. Since for each $i\in S$, $ A(i)$ and $ B(i)$ are compact sets,  $ P(A(i))$ and $ P(B(i))$ are compact metric spaces. For each $i ,j\in  S$, $\mu\in P(A(i))$ and $\nu\in P(B(i))$, the associated cost and transition rates are defined, respectively, as follows:
$$c(i,\mu,\nu):=\int_{B(i)}\int_{A(i)}c(i,a,b)\mu(da)\nu(db),$$
$$q(j|i,\mu,\nu):=\int_{B(i)}\int_{A(i)}q(j|i,a,b)\mu(da)\nu(db).$$
Note that $\pi^1 \in {\Pi_{1}^s}$ can be identified with a map $\pi^1: S \to {P}(A)$ such that  $ \pi^1(\cdot|j) \in {P}(A(j))$ for each $j \in S$. Thus, we have $  \Pi_{1}^s=\displaystyle \Pi_{i\in S} {P}(A(i))$ and $\Pi_{2}^s=\displaystyle  \Pi_{i\in S} {P}(B(i))$. Therefore by Tychonoff theorem, the sets ${\Pi_{1}^s}$ and ${\Pi_{2}^s}$ are compact metric spaces.
Also, note that under Assumption \ref{assm 2.1} (given below) for any initial state $i\in S$ and any pair of strategies $(\pi^1,\pi^2)\in \Pi^1\times \Pi^2$, Theorem 4.27 in \cite{KR1} yields the existence of a unique probability measure denoted by $P^{\pi^1,\pi^2}_i$ on $(\Omega,\mathscr{F})$.  Let $E^{\pi^1,\pi^2}_i$ be the expectation operator with respect to  $P^{\pi^1,\pi^2}_i$. Also, from [\cite{GH4}, pp.13-15], we know that $\{\xi_t\}_{t\geq 0}$ is a Markov process under any $(\pi^1,\pi^2)\in \Pi^m_1\times\Pi^m_2$ (in fact, strong Markov).
 Now we give the definition of the risk-sensitive average cost criterion for zero-sum continuous-time games. Since the risk-sensitive parameter remains fixed throughout  we assume
  without any loss of generality that the risk-sensitivity coefficient $\theta=1$. For each $i\in S$ and any $(\pi^1,\pi^2) \in \Pi^1\times \Pi^2$, the risk-sensitive ergodic cost criterion is given by
\begin{equation}
J(i,c,\pi^1,\pi^2):= \limsup_{T\rightarrow \infty}\frac{1}{ T}\ln E^{\pi^1,\pi^2}_i\biggl[e^{\int_{0}^{T}\int_{B}\int_{A} c(\xi_t,a,b)\pi^1(da|\omega,t)\pi^2(db|\omega,t)dt}\biggr].\label{eq 2.4}
\end{equation}
Player 1 tries to maximize the above over his/her admissible strategies whereas player 2 tries to minimize the same.
Now we define the lower/upper value of the game. The functions on S defined by $L(i):=\displaystyle \sup_{\pi^1\in \Pi^1}\inf_{\pi^2\in \Pi^2}J(i,c,\pi^1,\pi^2)$ and $U(i):=\displaystyle \inf_{\pi^2\in \Pi^2}\sup_{\pi^1\in \Pi^1}J(i,c,\pi^1,\pi^2)$ are called, respectively, the lower value and the upper value of the game. It is easy to see that
$$L(i)\leq U(i)~ \text{for all}~i\in S.$$
\begin{definition}
	If $L(i)=U(i)$ for all $i\in S$, then the common function is called the value of the game and is denoted by $J^{*}(i)$.
\end{definition}

\begin{definition}
	Suppose that the game admits a value $J^{*}$. Then a strategy $\pi^{*1}$ in $\Pi^1$ is said to be optimal for player 1 if
	$$\inf_{\pi^2\in \Pi^2}J(i,c,\pi^{*1},\pi^2)=J^{*}(i)~ \text{for all}~ i\in S.$$
	Similarly, $\pi^{*2}\in \Pi^2$ is optimal for player 2 if
	$$\sup_{\pi^1\in \Pi^1}J(i,c,\pi^1,\pi^{*2})=J^{*}(i) ~ \text{for all}~i\in S.$$
	If $\pi^{*k}\in \Pi^k$ is optimal for player k (k=1,2), then $(\pi^{*1},\pi^{*2})$ is called a pair of optimal strategies and also called a saddle-point equilibrium.
\end{definition}
Next we list the commonly used notations below:
\begin{itemize}
	\item For any finite set $\mathcal{D}\subset S$, we define $\mathcal{B}_{\mathcal{D}} = \{f:S\to\mathbb{R}\mid \,\, f(i) = 0\,\,\, \forall \,\, i\in \mathcal{D}^c\}$\,.
	\item $\mathcal{B}_{\mathcal{D}}^{+}\subset \mathcal{B}_{\mathcal{D}}$ denotes the cone of all nonnegative functions vanishing outside $\mathscr{D}.$
	\item  Given any real-valued function $\mathcal{V}\geq 1$ on $S$, we define a Banach space $(L^\infty_{\mathcal{V}},\|\cdot\|^\infty_\mathcal{V})$ of $\mathcal{V}$-weighted  functions by
	$$L^\infty_\mathcal{V}=\biggl\{u: S\rightarrow\mathbb{R}\mid \|u\|^\infty_\mathcal{V}:=\sup_{i\in  S}\frac{|u(i)|}{\mathcal{V}(i)}< \infty\biggr\}.$$
\item $\|c\|_\infty:=\displaystyle \sup_{(i,a,b)\in K}c(i,a,b)$.
\item For any function $f\in \mathcal{B}_{\mathcal{D}}$, $\|f\|_\mathcal{D}=\max\{|f(i)|:i\in \mathcal{D}\}$.
\item For any finite set $\mathscr{B}\subset S$, $\tilde{\tau}(\mathscr{B}):=\inf\{t>0:\xi_t\in \mathscr{B}\}$.
\end{itemize}
Our main goal is to establish the existence of a saddle-point equilibrium among the class of admissible history-dependent strategies.
 To this end, following \cite{BG} and \cite{BP}, we investigate the HJI equation given by
\begin{align}
\rho \psi(i)&=\sup_{\mu\in P(A(i))}\inf_{\nu\in P(B(i))}\bigg[\sum_{j\in S}\psi(j)q(j|i,\mu,\nu)+c(i,\mu,\nu)\psi(i)\bigg]\nonumber\\
&=\inf_{\nu\in P(B(i))}\sup_{\mu\in P(A(i))}\bigg[\sum_{j\in S}\psi(j)q(j|i,\mu,\nu)+c(i,\mu,\nu)\psi(i)\bigg].
	\end{align}
Here $\rho$ is a scalar and $\psi$ is an appropriate function. The above is clearly an eigenvalue problem related to a nonlinear operator on an appropriate space.
By a nonlinear version of Krein-Rutman theorem, we first show that Dirichlet eigenvalue problem associated with the above equation admits a solution in the space of bounded functions. Then by using a suitable limiting argument we show that the above HJI equation admits a principal eigenpair in an appropriate space. Finally exploiting the HJI equation, we completely characterize all possible saddle-point equilibria in the space of stationary Markov strategies. This is a brief outline of our procedure of establishing a saddle point equilibrium.
 The details now follow.

Since the transition rates (i.e., $q(j |i, a,b)$ ) may be unbounded, to avoid the explosion of the state process $\{\xi_t, t\geq 0\}$,  the following assumption is imposed on the transition rates, which had been widely used in CTMDPs; see, for instance, [\cite{GS1}, \cite{GL}, \cite{GP}] and references therein.
\begin{assumption}\label{assm 2.1}
	There exist  real-valued function $\tilde{V}\geq 1$ on $S$, constants $b_0\neq 0$ and $b_1\geq 0$, and $b_2>0$ such that :
	\begin{enumerate}
		\item [(i)] $\sum_{j\in S}\tilde{V}(j)q(j | i, a,b)\leq b_0\tilde{V}(i)+b_1$ for all $(i, a,b)\in K$;
			\item [(ii)] $q^{*}(i)\leq b_2 \tilde{V}(i)$ for all $i\in S$, where $q^{*}(i)$ is as in (\ref{eq 2.2}).
	\end{enumerate}
\end{assumption}
Throughout the rest of this article we are going to assume that Assumption \ref{assm 2.1} holds. Note that if $\sup_{i \in S}q^{*}(i)<\infty$ then Assumption \ref{assm 2.1} holds trivially. In this case we can choose $\tilde{V}$ to be a suitable constant.

Since we are allowing our transition and cost rates to be unbounded, to guarantee the finiteness of $J(i,c,\pi^1,\pi^2)$, we need the following Assumption.
\begin{assumption}\label{assm 2.2}
	We assume that the CTMDP $\{\xi_t\}_{t\geq 0}$ is irreducible under every  pair of stationary Markov strategies $(\pi^1,\pi^2)\in \Pi^s_1 \times \Pi^s_2$. Assume  that the cost function $c$ is bounded below. Thus without loss of generality we assume that $c \geq 0$.
	Furthermore, suppose there exist a constant $C>0$, a finite set $\hat{\mathscr{K}}$ and a Lyapunov function $V : S \to [1,\infty)$ such that one of the following hold.
	\begin{itemize}
		\item[(a)]\textbf{When the running cost is bounded:} For some positive constant $\hat{\gamma} > \|c\|_{\infty}$, we have following blanket stability condition
		\begin{align}
	\sup_{(a,b)\in A(i)\times B(i)}\sum_{j\in S}V(j)q(j|i,a,b)\leq C I_{\hat{\mathscr{K}}}(i)-\hat{\gamma} V(i) ~\forall i\in S.\label{eq 2.5}
		\end{align}
	
		\item[(b)]\textbf{When the running cost is unbounded:} For some norm-like function $\hat{\ell} :S\rightarrow\mathbb{R}_{+}$, the function $\hat{\ell}(\cdot)-\displaystyle \max_{(a,b)\in A(\cdot)\times B(\cdot)}c(\cdot,a,b)\;$ is norm-like and we have the following blanket stability condition
		\begin{align}
	\sup_{(a,b)\in A(i)\times B(i)}\sum_{j\in S}V(j)q(j|i,a,b)\leq C I_{\hat{\mathscr{K}}}(i)-\hat{\ell}(i)V(i) ~\forall i\in S.\label{eq 2.6}
		\end{align}
	\end{itemize}
\end{assumption}
\noindent We wish to establish the  existence of a saddle-point equilibrium in the class of all stationary strategies. In view of this we also need the following assumptions. Let $i_0 \in S$ be a fixed point (a reference state).
\begin{assumption}\label{assm 2.3}
	\begin{enumerate}
		\item [(i)] For any fixed $i,  j\in S$ the functions  $q(j|i,a,b)$ and ${c}(i, a,b)$ are continuous in $(a,b)\in A(i)\times B(i)$\,.
		
		\item [(ii)] The sum $\displaystyle \sum_{j\in S}V(j)q(j|i,a,b)$ is continuous in $(a,b)\in A(i)\times B(i)$ for any given $i\in S$, where $V$ is as Assumption \ref{assm 2.2}.
		\item [(iii)] There exists $i_0\in S$ such that any state can be reached from $i_0$, i.e., $q(j|i_0,a,b)>0$ for all $j\neq i_0$ and $(a,b)\in A(i_0)\times B(i_0)$.
	\end{enumerate}	
\end{assumption}
We first construct an increasing sequence of finite subsets $\hat{\mathscr{D}}_n\subset S$ such that $\displaystyle \cup_{i=0}^{\infty}\hat{\mathscr{D}}_n=S$ and $i_0\in \hat{\mathscr{D}}_n$ for all $n\in \mathbb{N}$. Define $\tau_n:=\tau(\hat{\mathscr{D}}_n):=\inf\{t\geq 0:\xi_t\notin \hat{\mathscr{D}}_n\}$, first exit time from $\hat{\mathscr{D}}_n$.
\begin{proposition}\label{prop 2.1}
	Suppose Assumption \ref{assm 2.3} holds. Let $\tilde{c} : K \to \mathbb{R}$  be a function continuous in $(a,b)\in A(i)\times B(i)$ for each fixed $i\in S$.
Suppose the cost function $\tilde{c}$ satisfies the relation $\tilde{c} < - \delta$ in $\hat{\mathscr{D}}_n$ for some $\delta > 0$ and $n\in\mathbb{N}$\,. Then for any $g\in \mathcal{B}_{\hat{\mathscr{D}}_n}$ there exists a unique $\varphi\in \mathcal{B}_{\hat{\mathscr{D}}_n}$ satisfying the following nonlinear equation
\begin{align}
-g(i)&=\sup_{\mu\in P(A(i))}\inf_{\nu\in P(B(i))}\biggl[\sum_{j\in S}\varphi(j)q(j|i,\mu,\nu) + \tilde{c}(i,\mu,\nu)\varphi(i)\biggr]\nonumber\\
&=\inf_{\nu\in P(B(i))}\sup_{\mu\in P(A(i))}\biggl[\sum_{j\in S}\varphi(j)q(j|i,\mu,\nu) + \tilde{c}(i,\mu,\nu)\varphi(i)\biggr]~\forall i\in \hat{\mathscr{D}}_n, \label{eq 2.7}
\end{align}
with $\varphi(i)=0$ for all $i\in \hat{\mathscr{D}}_n^c$\,.
Moreover the unique solution of the above equation satisfies
\begin{align}
\varphi(i)&=\sup_{\pi^1\in \Pi^1}\inf_{\pi^2\in \Pi^2}E^{\pi^1,\pi^2}_i\biggl[\int_{0}^{\tau_n}e^{\int_{0}^{t} \tilde{c}(\xi_s,\pi^1(s),\pi^2(s))ds}g(\xi_t)dt\biggr]\nonumber\\
&=\inf_{\pi^2\in \Pi^2}\sup_{\pi^1\in \Pi^1}E^{\pi^1,\pi^2}_i\biggl[\int_{0}^{\tau_n}e^{\int_{0}^{t} \tilde{c}(\xi_s,\pi^1(s),\pi^2(s))ds}g(\xi_t)dt\biggr]~\forall i\in S,\label{eq 2.8}
\end{align}
where as before $\tau_n=\inf\{t\geq 0:\xi_t\notin \hat{\mathscr{D}}_n\}$.
\begin{proof}
	Let $(y_i)_{i\in \hat{\mathscr{D}}_n}$ be a sequence in $\mathbb{R}$. Fix $i\in \hat{\mathscr{D}}_n$. Let $F:\mathbb{R}\rightarrow \mathbb{R}$ be defined by
	\begin{align}
	x\rightarrow F(x)=\sup_{\mu\in P(A(i))}\inf_{\nu\in P(B(i))}\biggl[\sum_{i\neq j\in \hat{\mathscr{D}}_n}y_jq(j|i,\mu,\nu)+\biggl(q(i|i,\mu,\nu)+ \tilde{c}(i,\mu,\nu)\biggr)x\biggr], i\in \hat{\mathscr{D}}_n.\label{eq 2.9}
	\end{align}
	Suppose $x_2>x_1$. Let $\varepsilon>0$. Then there exists  $\pi^1_\varepsilon \in \Pi^s_1$ for which the following holds
	\interdisplaylinepenalty=0
	\begin{align*}
	F(x_1)-F(x_2)
	=&\sup_{\mu\in P(A(i))}\inf_{\nu\in P(B(i))}\biggl[\sum_{i\neq j\in \hat{\mathscr{D}}_n}y_jq(j|i,\mu,\nu)+\biggl(q(i|i,\mu,\nu)+ \tilde{c}(i,\mu,\nu)\biggr)x_1\biggr]\\
	&-\sup_{\mu\in P(A(i))}\inf_{\nu\in P(B(i))}\biggl[\sum_{i\neq j\in \hat{\mathscr{D}}_n}y_jq(j|i,\mu,\nu)+\biggl(q(i|i,\mu,\nu)+ \tilde{c}(i,\mu,\nu)\biggr)x_2\biggr]\\
	\geq & \inf_{\nu\in P(B(i))}\biggl[\sum_{i\neq j\in \hat{\mathscr{D}}_n}y_jq(j|i,\pi^1_\varepsilon(i),\nu)+\biggl(q(i|i,\pi^1_\varepsilon(i),\nu)+ \tilde{c}(i,\pi^1_\varepsilon(i),\nu)\biggr)x_1\biggr]\\
	&-\inf_{\nu\in P(B(i))}\biggl[\sum_{i\neq j\in \hat{\mathscr{D}}_n}y_jq(j|i,\pi^1_\varepsilon(i),\nu)+\biggl(q(i|i,\pi^1_\varepsilon(i),\nu)+ \tilde{c}(i,\pi^1_\varepsilon(i),\nu)\biggr)x_2+\varepsilon\biggr]\\
	\geq & \inf_{\nu\in P(B(i))}\biggl[\biggl(q(i|i,\pi^1_\varepsilon(i),\nu)+ \tilde{c}(i,\pi^1_\varepsilon(i),\nu)\biggr)(x_1-x_2)\biggr]-\varepsilon\\
	\geq & \inf_{\nu\in P(B(i))}\biggl[- \tilde{c}(i,\pi^1_\varepsilon(i),\nu)(x_2-x_1)\biggr]-\varepsilon\\
	> &\,\, \delta(x_2-x_1)-\varepsilon.
	\end{align*}
	Since $\varepsilon > 0$ is arbitrary we get $F(x_1)>F(x_2)$. Also, we see that $\lim_{x\rightarrow +\infty}F(x)=-\infty$ and $\lim_{x\rightarrow -\infty}F(x)=+\infty$. Since $F$ is continuous in $x$, for every $y\in \mathbb{R}$, there exists a unique $x$ satisfying $F(x)=y$. Now using the definition of $F$, for fixed $g\in \mathcal{B}_{\hat{\mathscr{D}}_n}$, we can define a map $\hat{T}:\mathcal{B}_{\hat{\mathscr{D}}_n}\rightarrow\mathcal{B}_{\hat{\mathscr{D}}_n}$ satisfying
	\begin{align}
	\sup_{\mu\in P(A(i))}\inf_{\nu\in P(B(i))}\biggl[\sum_{i\neq j\in \hat{\mathscr{D}}_n}\tilde{\phi}(j)q(j|i,\mu,\nu)+\biggl(q(i|i,\mu,\nu)+ \tilde{c}(i,\mu,\nu)\biggr)(\hat{T}\tilde{\phi}(i))\biggr]=-g(i),~i\in\hat{\mathscr{D}}_n.\label{eq 2.10}
	\end{align}
Let $\tilde{\phi}_1, \tilde{\phi}_2\in \mathcal{B}_{\hat{\mathscr{D}}_n}$. Also, let $\tilde{\pi}^1$ be an outer maximizing selector of $$\sup_{\mu\in P(A(i))}\inf_{\nu\in P(B(i))}\biggl[\sum_{i\neq j\in \hat{\mathscr{D}}_n}\tilde{\phi}_2(j)q(j|i,\mu,\nu)+\biggl(q(i|i,\mu,\nu)+ \tilde{c}(i,\mu,\nu)\biggr)\hat{T}\tilde{\phi}_2(i)\biggr]\,.$$
Assumption \ref{assm 2.3}, ensures the existence of such a selector.
It then follows that
\interdisplaylinepenalty=0
\begin{align*}
0= &\sup_{\mu\in P(A(i))}\inf_{\nu\in P(B(i))}\biggl[\sum_{i\neq j\in \hat{\mathscr{D}}_n}\tilde{\phi}_1(j)q(j|i,\mu,\nu)+\biggl(q(i|i,\mu,\nu)+ \tilde{c}(i,\mu,\nu)\biggr)\hat{T}\tilde{\phi}_1(i)\biggr]\nonumber\\
&-\sup_{\mu\in P(A(i))}\inf_{\nu\in P(B(i))}\biggl[\sum_{i\neq j\in \hat{\mathscr{D}}_n}\tilde{\phi}_2(j)q(j|i,\mu,\nu)+\biggl(q(i|i,\mu,\nu)+ \tilde{c}(i,\mu,\nu)\biggr)\hat{T}\tilde{\phi}_2(i)\biggr]\nonumber\\
\geq &\inf_{\nu\in P(B(i))}\biggl[\sum_{i\neq j\in \hat{\mathscr{D}}_n}\tilde{\phi}_1(j)q(j|i,\tilde{\pi}^1(i),\nu)+\biggl(q(i|i,\tilde{\pi}^1(i),\nu)+ \tilde{c}(i,\tilde{\pi}^1(i),\nu)\biggr)\hat{T}\tilde{\phi}_1(i)\biggr]\nonumber\\
&-\inf_{\nu\in P(B(i))}\biggl[\sum_{i\neq j\in \hat{\mathscr{D}}_n}\tilde{\phi}_2(j)q(j|i,\tilde{\pi}^1(i),\nu)+\biggl(q(i|i,\tilde{\pi}^1(i),\nu)+ \tilde{c}(i,\tilde{\pi}^1(i),\nu)\biggr)\hat{T}\tilde{\phi}_2(i)\biggr]\nonumber\\
\geq & \inf_{\nu\in P(B(i))}\biggl[\sum_{i\neq j\in \hat{\mathscr{D}}_n}(\tilde{\phi}_1(j)-\tilde{\phi}_2(j))q(j|i,\tilde{\pi}^1(i),\nu)+\biggl(q(i|i,\tilde{\pi}^1(i),\nu)+ \tilde{c}(i,\tilde{\pi}^1(i),\nu)\biggr)(\hat{T}\tilde{\phi}_1(i)-\hat{T}\tilde{\phi}_2(i))\biggr]\,.\nonumber
\end{align*}	
Now let the infimum of the RHS (of the above) attain at $\pi^{*2}$. Then
$$\|\tilde{\phi}_1-\tilde{\phi}_2\|_{\hat{\mathscr{D}}_n}q(i|i,\tilde{\pi}^1(i),\pi^{*2}(i))+\biggl(q(i|i,\tilde{\pi}^1(i),\pi^{*2}(i))+ \tilde{c}(i,\tilde{\pi}^1(i),\pi^{*2}(i))\biggr)(\hat{T}\tilde{\phi}_1(i)-\hat{T}\tilde{\phi}_2(i))\leq 0.$$
Hence, we deduce that
\interdisplaylinepenalty=0
\begin{align*}
(\hat{T}\tilde{\phi}_2(i)-\hat{T}\tilde{\phi}_1(i))\leq \sup_{\mu\in P(A(i))}\sup_{\nu\in P(B(i))}\frac{-q(i|i,\mu,\nu)}{-q(i|i,\mu,\nu)- \tilde{c}(i,\mu,\nu)}\|\tilde{\phi}_1-\tilde{\phi}_2\|_{\hat{\mathscr{D}}_n}\,.
\end{align*}
Now in the above calculation, interchanging $\tilde{\phi}_1,\tilde{\phi}_2$, it follows that
$$\|\hat{T}\tilde{\phi}_1-\hat{T}\tilde{\phi}_2\|_{\hat{\mathscr{D}}_n}\leq \alpha_1 \|\tilde{\phi}_1-\tilde{\phi}_2\|_{\hat{\mathscr{D}}_n}\,,$$ where $\alpha_1$ is a positive constant less than $1$\,. This implies that $\hat{T}$ is a contraction map. Thus, by Banach fixed point theorem, there exists a unique $\varphi\in \mathcal{B}_{\hat{\mathscr{D}}_n}$ such that $\hat{T}(\varphi)=\varphi$.
Now by Fan's minimax theorem, see [\cite{Fan}, Theorem 3], we have
\begin{align*}
&\sup_{\mu\in P(A(i))}\inf_{\nu\in P(B(i))}\biggl[\sum_{j\in S}\varphi(j)q(j|i,\mu,\nu)+ \tilde{c}(i,\mu,\nu)\varphi(i)\biggr]\nonumber\\
&=\inf_{\nu\in P(B(i))}\sup_{\mu\in P(A(i))}\biggl[\sum_{j\in S}\varphi(j)q(j|i,\mu,\nu)+ \tilde{c}(i,\mu,\nu)\varphi(i)\biggr].
\end{align*}
This proves that (\ref{eq 2.7}) admits a unique solution. Now by using Dynkin formula as in [\cite{GH4}, Appendix C.3], for any $(\pi^1,\pi^2)\in \Pi^1\times \Pi^2$ and $T>0$, we get
\interdisplaylinepenalty=0
\begin{align}
&E^{\pi^1,\pi^2}_i\biggl[e^{\int_{0}^{T\wedge\tau_n} \tilde{c}(\xi_s,\pi^1(s),\pi^2(s))ds}\varphi(\xi_{T\wedge\tau_n})\biggr]-\varphi(i)\nonumber\\
&=E^{\pi^1,\pi^2}_i\biggl[\int_{0}^{T\wedge\tau_n}e^{\int_{0}^{t} \tilde{c}(\xi_s,\pi^1(s),\pi^2(s))ds}\bigg(\tilde{c}(\xi_t,\pi^1(t),\pi^2(t))\varphi(\xi_t)+\sum_{j\in S}\varphi(j)q(j|\xi_t,\pi^1(t),\pi^2(t))\bigg)dt\biggr].\label{eq 2.11}
\end{align}
 Using the compactness of $A(i), B(i)$ and the continuity of $\tilde{c}, q$,  there exists a pair of selectors $(\pi^{*1},\pi^{*2})\in \Pi^s_1\times \Pi^s_2$ (i.e., a mini-max selector) satisfying
\interdisplaylinepenalty=0
\begin{align}
-g(i)&=\inf_{\nu\in P(B(i))}\biggl[\sum_{j\in S}\varphi(j)q(j|i,\pi^{*1}(i),\nu)+ \tilde{c}(i,\pi^{*1}(i),\nu)\varphi(i)\biggr]\nonumber\\
&=\sup_{\mu\in P(A(i))}\biggl[\sum_{j\in S}\varphi(j)q(j|i,\mu,\pi^{*2}(i))+ \tilde{c}(i,\mu,\pi^{*2}(i))\varphi(i)\biggr].\label{eq 2.12}
\end{align}
Then, using (\ref{eq 2.11}) and (\ref{eq 2.12}), we obtain
\begin{align*}
E^{\pi^{*1},\pi^2}_i\biggl[\int_{0}^{T\wedge\tau_n}e^{\int_{0}^{t} \tilde{c}(\xi_s,\pi^{*1}(\xi_s),\pi^2(s))ds}g(\xi_t)dt\biggr]\geq -E^{\pi^{*1},\pi^2}_i\biggl[e^{\int_{0}^{T\wedge\tau_n} \tilde{c}(\xi_s,\pi^{*1}(\xi_s),\pi^2(s))ds}\varphi(\xi_{T\wedge\tau_n})\biggr]+\varphi(i).
\end{align*}
Using the dominated convergence theorem, taking $T\rightarrow\infty$ in the above equation, we get
\interdisplaylinepenalty=0
\begin{align*}
E^{\pi^{*1},\pi^2}_i\biggl[\int_{0}^{\tau_n}e^{\int_{0}^{t} \tilde{c}(\xi_s,\pi^{*1}(\xi_s),\pi^2(s))ds}g(\xi_t)dt\biggr]&\geq -E^{\pi^{*1},\pi^2}_i\biggl[e^{\int_{0}^{\tau_n} \tilde{c}(\xi_s,\pi^{*1}(\xi_s),\pi^2(s))ds}\varphi(\xi_{\tau_n})\biggr]+\varphi(i)\\
&=\varphi(i).
\end{align*}
Hence
\begin{align*}
\varphi(i)&\leq E^{\pi^{*1},\pi^2}_i\biggl[\int_{0}^{\tau_n}e^{\int_{0}^{t} \tilde{c}(\xi_s,\pi^{*1}(\xi_s),\pi^2(s))ds}g(\xi_t)dt\biggr].
\end{align*}
Since $\pi^2\in \Pi^2$ is arbitrary,
\begin{align}
\varphi(i)\leq \inf_{\pi^2\in \Pi^2} E^{\pi^{*1},\pi^2}_i\biggl[\int_{0}^{\tau_n}e^{\int_{0}^{t} \tilde{c}(\xi_s,\pi^{*1}(\xi_s),\pi^2(s))ds}g(\xi_t)dt\biggr].\label{eq 2.13}
\end{align}
Similarly, using (\ref{eq 2.11}), (\ref{eq 2.12}), and Fatou's Lemma, we get
\begin{align}
\varphi(i)&\geq\sup_{\pi^1\in \Pi^1} E^{\pi^{1},\pi^{*2}}_i\biggl[\int_{0}^{\tau_n}e^{\int_{0}^{t} \tilde{c}(\xi_s,\pi^{1}(s),\pi^{*2}(\xi_s))ds}g(\xi_t)dt\biggr].\label{eq 2.14}
\end{align}
Using (\ref{eq 2.13}) and (\ref{eq 2.14}), we obtain
\begin{align*}
\varphi(i)&=\inf_{\pi^2\in \Pi^2}\sup_{\pi^1\in \Pi^1} E^{\pi^{1},\pi^2}_i\biggl[\int_{0}^{\tau_n}e^{\int_{0}^{t} \tilde{c}(\xi_s,\pi^{1}(s),\pi^2(s))ds}g(\xi_t)dt\biggr]\\
&=\sup_{\pi^1\in \Pi^1}\inf_{\pi^2\in \Pi^2} E^{\pi^{1},\pi^2}_i\biggl[\int_{0}^{\tau_n}e^{\int_{0}^{t} \tilde{c}(\xi_s,\pi^{1}(s),\pi^2(s))ds}g(\xi_t)dt\biggr]~i\in S.
\end{align*}
This completes the proof.
\end{proof}
\end{proposition}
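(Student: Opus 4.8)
The plan is to exploit the finiteness of $\hat{\mathscr{D}}_n$: since the sought $\varphi$ must vanish off $\hat{\mathscr{D}}_n$, equation (\ref{eq 2.7}) is in effect a coupled system of $|\hat{\mathscr{D}}_n|$ nonlinear scalar equations, and I would solve it by a Banach fixed point argument after isolating the diagonal term. Writing $\sum_{j\in S}\varphi(j)q(j|i,\mu,\nu)=\sum_{j\in\hat{\mathscr{D}}_n,\,j\neq i}\varphi(j)q(j|i,\mu,\nu)+q(i|i,\mu,\nu)\varphi(i)$, the coefficient multiplying $\varphi(i)$ inside the bracket of (\ref{eq 2.7}) becomes $q(i|i,\mu,\nu)+\tilde{c}(i,\mu,\nu)=-\bigl(q(i,\mu,\nu)-\tilde{c}(i,\mu,\nu)\bigr)$, and the hypothesis $\tilde{c}<-\delta$ on $\hat{\mathscr{D}}_n$ forces this coefficient to be $\le-\delta<0$ uniformly in $i,\mu,\nu$. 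This strict negativity is what drives the whole scheme.

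For the fixed point step, given a trial $\tilde{\phi}\in\mathcal{B}_{\hat{\mathscr{D}}_n}$ I would define, for each $i\in\hat{\mathscr{D}}_n$, the scalar function $F$ of (\ref{eq 2.9}) obtained by freezing the off-diagonal entries at $\tilde{\phi}(j)$ and letting the diagonal entry be a free real variable $x$. Using an $\varepsilon$-optimal outer selector for player~$1$ one checks that $F$ is strictly decreasing (because the coefficient of $x$ is $\le-\delta$), and that $F(x)\to\mp\infty$ as $x\to\pm\infty$ (because that coefficient is also bounded away from $0$); $F$ is continuous since, $\hat{\mathscr{D}}_n$ being finite, the bracket is a finite sum of terms continuous in $(\mu,\nu)$ by Assumption~\ref{assm 2.3}(i), and sup/inf over the compacta $P(A(i)),P(B(i))$ preserve continuity. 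Hence $F(x)=-g(i)$ has a unique root, which I take to be $(\hat{T}\tilde{\phi})(i)$; this defines $\hat{T}:\mathcal{B}_{\hat{\mathscr{D}}_n}\to\mathcal{B}_{\hat{\mathscr{D}}_n}$, cf.\ (\ref{eq 2.10}). A comparison computation — picking an outer maximizing selector for player~$1$ in the problem defining $\hat{T}\tilde{\phi}_2$ and then the corresponding inner minimizer — yields $|(\hat{T}\tilde{\phi}_1)(i)-(\hat{T}\tilde{\phi}_2)(i)|\le\dfrac{q(i,\mu,\nu)}{q(i,\mu,\nu)-\tilde{c}(i,\mu,\nu)}\,\|\tilde{\phi}_1-\tilde{\phi}_2\|_{\hat{\mathscr{D}}_n}$ for suitable $\mu,\nu$; since $q(i,\mu,\nu)$ is uniformly bounded over $i\in\hat{\mathscr{D}}_n$ and $(\mu,\nu)$ by (\ref{eq 2.2}) while $-\tilde{c}>\delta$, the prefactor is bounded by some $\alpha_1<1$, so $\hat{T}$ is a contraction on the finite-dimensional Banach space $\mathcal{B}_{\hat{\mathscr{D}}_n}$ and Banach's theorem yields a unique fixed point $\varphi$, which by construction solves the first equality in (\ref{eq 2.7}). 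The second equality (interchange of $\sup\inf$ and $\inf\sup$) then follows from Fan's minimax theorem \cite{Fan}, since for fixed $i$ and fixed $\varphi$ the bracket is affine — hence concave in $\mu$ and convex in $\nu$ — and continuous on the compact convex sets $P(A(i))$, $P(B(i))$.

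For the stochastic representation (\ref{eq 2.8}) I would first apply Dynkin's formula (as in \cite{GH4}, Appendix~C.3) to $t\mapsto e^{\int_0^t\tilde{c}(\xi_s,\pi^1(s),\pi^2(s))\,ds}\varphi(\xi_t)$ stopped at $T\wedge\tau_n$; this is licit because on $[0,\tau_n)$ the state remains in the finite set $\hat{\mathscr{D}}_n$, where $\tilde{c}$, $q^{*}$ and $\varphi$ are bounded, which gives identity (\ref{eq 2.11}). Compactness of $A(i),B(i)$ together with Assumption~\ref{assm 2.3} produces a pair of stationary minimax selectors $(\pi^{*1},\pi^{*2})$ realizing (\ref{eq 2.12}). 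Substituting $\pi^{*1}$ against an arbitrary $\pi^2\in\Pi^2$ in (\ref{eq 2.11}) and using $\inf_{\nu}[\,\cdot\,]\le[\,\cdot\,]_{\nu=\pi^2(t)}$ pointwise (with the first line of (\ref{eq 2.12})) bounds the integrand below by $-g(\xi_t)$, whence $\varphi(i)\le E^{\pi^{*1},\pi^2}_i\bigl[\int_0^{T\wedge\tau_n}e^{\int_0^t\tilde{c}\,ds}g(\xi_t)\,dt\bigr]+E^{\pi^{*1},\pi^2}_i\bigl[e^{\int_0^{T\wedge\tau_n}\tilde{c}\,ds}\varphi(\xi_{T\wedge\tau_n})\bigr]$; letting $T\to\infty$, the boundary term vanishes (it is dominated by $\|\varphi\|_{\hat{\mathscr{D}}_n}e^{-\delta(T\wedge\tau_n)}$, and $\varphi(\xi_{\tau_n})=0$ on $\{\tau_n<\infty\}$ since $\xi_{\tau_n}\notin\hat{\mathscr{D}}_n$), and dominated convergence (integrand dominated by $\|g\|_{\hat{\mathscr{D}}_n}e^{-\delta t}$) gives (\ref{eq 2.13}); the symmetric argument with $\pi^{*2}$ and Fatou's lemma gives (\ref{eq 2.14}). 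Combining (\ref{eq 2.13}), (\ref{eq 2.14}) with the always-true inequality $\sup_{\pi^1}\inf_{\pi^2}\le\inf_{\pi^2}\sup_{\pi^1}$ forces equality throughout and identifies $\varphi$ with both values in (\ref{eq 2.8}).

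The step I expect to be hardest is the first half — recasting the nonlinear Dirichlet problem as a genuine contraction. The subtle points are the well-posedness and strict monotonicity of the scalar map $F$ (it is a difference of two $\sup\inf$ expressions, so one must argue through $\varepsilon$-optimal selectors), and, above all, that the contraction constant $\alpha_1$ is $<1$ rather than merely $\le1$: this uses precisely the finiteness of $\hat{\mathscr{D}}_n$ — so that the transition rates are uniformly bounded there — together with the quantitative gap $\tilde{c}<-\delta$. The remainder is a fairly standard Fan-minimax plus Dynkin-formula plus dominated-convergence routine, the only extra care being that for history-dependent $\pi^k$ one still has $\pi^k(t)\in P(A(\xi_t))$ resp.\ $P(B(\xi_t))$, so that the pointwise sup/inf comparisons remain valid.
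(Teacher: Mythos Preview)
Your proposal is correct and follows essentially the same approach as the paper's proof: isolate the diagonal term, use the strict negativity of $q(i|i,\mu,\nu)+\tilde{c}(i,\mu,\nu)$ to build the map $\hat{T}$ via the scalar equation (\ref{eq 2.9})--(\ref{eq 2.10}), prove $\hat{T}$ is a contraction with constant $\alpha_1<1$ through the same selector-comparison computation, invoke Banach's fixed point theorem and Fan's minimax theorem, and finish with Dynkin's formula plus dominated convergence/Fatou to obtain (\ref{eq 2.13})--(\ref{eq 2.14}). Your write-up is in fact slightly more explicit than the paper's on a couple of points (why $\alpha_1<1$ needs the finiteness of $\hat{\mathscr{D}}_n$, and the $e^{-\delta t}$ domination that justifies the limit $T\to\infty$), but the strategy and all the key steps coincide.
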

 We now recall a version of the nonlinear Krein-Rutman theorem from [\cite{A1}, Section 3.1].
 Let $\hat{\mathscr{X}}$ be an ordered Banach space. In what follows $\succeq$ denotes a partial ordering in $\hat{\mathscr{X}}$ with respect to a positive cone $\hat{\mathscr{C}}$ ($\subset \hat{\mathscr{X}}$), that is $x\succeq y\Leftrightarrow x-y\in \hat{\mathscr{C}}$. Also, recall that if a map $\tilde{T}:\hat{\mathscr{X}}\rightarrow\hat{\mathscr{X}}$ is continuous and compact, it is called completely continuous.
\begin{thm}\label{theo 2.1}
 Let $\hat{\mathscr{X}}$ be as above and $\hat{\mathscr{C}}\subset \hat{\mathscr{X}}$  a nonempty closed cone that satisfies $\hat{\mathscr{C}}-\hat{\mathscr{C}}=\hat{\mathscr{X}}$. Let $\tilde{T}:\hat{\mathscr{X}}\rightarrow\hat{\mathscr{X}}$ be an order-preserving, completely continuous, 1-homogeneous map with the property that if for some nonzero $\zeta\in \hat{\mathscr{C}}$ and $N>0$, we have  $N\tilde{T}(\zeta)\succeq \zeta$.  Then there exist a nontrivial $f\in \hat{\mathscr{C}}$ and $\tilde{\lambda}>0$ satisfying $\tilde{T}f=\tilde{\lambda}f$.
\end{thm}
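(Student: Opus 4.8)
I first note that this statement is recalled from the literature, so what follows is how \emph{I} would reconstruct its proof.

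The plan is to produce the eigenpair $(f,\tilde\lambda)$ by a perturbation argument exploiting the compactness of $\tilde T$, after two harmless reductions. Since $\tilde T$ is $1$-homogeneous (and continuous) one has $\tilde T(0)=0$, and hence $\tilde T(\hat{\mathscr{C}})\subseteq\hat{\mathscr{C}}$ because $\tilde T$ is order-preserving; moreover, since $\succeq$ is a genuine partial ordering, the cone $\hat{\mathscr{C}}$ is pointed, i.e.\ $\hat{\mathscr{C}}\cap(-\hat{\mathscr{C}})=\{0\}$. Next, replacing $\zeta$ by $\zeta/\|\zeta\|$ and $\tilde T$ by $N\tilde T$ (which merely rescales the eigenvalue obtained at the end) I may assume $\|\zeta\|=1$ and $\tilde T(\zeta)\succeq\zeta$.

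For $\varepsilon\in(0,1)$ I would then solve a truncated fixed-point problem on the \emph{convex} body $\hat{\mathscr{C}}\cap\overline{B}_1$, where $\overline{B}_1$ is the closed unit ball, namely I seek a fixed point of
\[
\Phi_\varepsilon(x):=\frac{\tilde T(x)+\varepsilon\zeta}{\max\bigl\{1,\ \|\tilde T(x)+\varepsilon\zeta\|\bigr\}}.
\]
The truncation is precisely the device that lets one work on the convex set $\hat{\mathscr{C}}\cap\overline{B}_1$ rather than on the (generally non-convex) cone-sphere: $\Phi_\varepsilon$ maps $\hat{\mathscr{C}}\cap\overline{B}_1$ into itself, is continuous, and is compact since it factors through the completely continuous map $\tilde T$. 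Schauder's fixed point theorem then yields $x_\varepsilon\in\hat{\mathscr{C}}\cap\overline{B}_1$ with $\Phi_\varepsilon(x_\varepsilon)=x_\varepsilon$, and from $\tilde T(x_\varepsilon)\succeq 0$ we get $x_\varepsilon\succeq c_\varepsilon\zeta$ with $c_\varepsilon>0$, so $x_\varepsilon\neq 0$ by pointedness.

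The heart of the argument is to discard the degenerate branch of the truncation. If $\|\tilde T(x_\varepsilon)+\varepsilon\zeta\|\le 1$ then $x_\varepsilon=\tilde T(x_\varepsilon)+\varepsilon\zeta$, and iterating $\tilde T(\zeta)\succeq\zeta$ (together with $1$-homogeneity and order-preservation) shows $x_\varepsilon\succeq m\varepsilon\zeta$ for every $m\in\mathbb{N}$; dividing by $m$, letting $m\to\infty$, and using that $\hat{\mathscr{C}}$ is closed gives $-\varepsilon\zeta\in\hat{\mathscr{C}}$, contradicting pointedness. Hence only the other branch occurs: $\|\tilde T(x_\varepsilon)+\varepsilon\zeta\|>1$, so $\|x_\varepsilon\|=1$ and $\mu_\varepsilon x_\varepsilon=\tilde T(x_\varepsilon)+\varepsilon\zeta$ with $\mu_\varepsilon:=\|\tilde T(x_\varepsilon)+\varepsilon\zeta\|$, and boundedness of $\tilde T$ on bounded sets gives $1<\mu_\varepsilon\le M_0+1$, where $M_0:=\sup_{x\in\hat{\mathscr{C}},\,\|x\|\le 1}\|\tilde T(x)\|<\infty$.

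Finally I would let $\varepsilon=\varepsilon_k\downarrow 0$: the a priori bound keeps $\mu_{\varepsilon_k}$ in a compact interval, and compactness of $\tilde T$ with $\|x_{\varepsilon_k}\|=1$ makes $\{x_{\varepsilon_k}\}$ precompact (since $x_{\varepsilon_k}=\mu_{\varepsilon_k}^{-1}(\tilde T(x_{\varepsilon_k})+\varepsilon_k\zeta)$), so along a subsequence $\mu_{\varepsilon_k}\to\tilde\lambda\ge 1$ and $x_{\varepsilon_k}\to f$ with $\|f\|=1$ and $f\in\hat{\mathscr{C}}$; continuity of $\tilde T$ then gives $\tilde T(f)=\tilde\lambda f$, and undoing the rescaling returns an eigenpair for the original $\tilde T$ with eigenvalue $\tilde\lambda/N>0$. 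The main obstacle is not the fixed-point theory but keeping the approximate solutions from collapsing to $0$, or in the limit to an eigenvector with eigenvalue $0$; this is exactly where the hypothesis $N\tilde T(\zeta)\succeq\zeta$ (which forbids $\tilde T$ from annihilating the direction $\zeta$) and the pointedness of $\hat{\mathscr{C}}$ are used in an essential way.
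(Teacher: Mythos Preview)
The paper does not supply a proof of this theorem; it is merely quoted from \cite{A1}. Your reconstruction is correct and follows a standard route to nonlinear Krein--Rutman results: perturb by $\varepsilon\zeta$, apply Schauder's fixed point theorem on the bounded convex set $\hat{\mathscr{C}}\cap\overline{B}_1$, rule out the degenerate (non-normalized) branch via the iteration $x_\varepsilon\succeq m\varepsilon\zeta$ and pointedness of the cone, and then pass to the limit using compactness of $\tilde T$. The crucial step --- preventing the approximate eigenvectors from collapsing to zero or the eigenvalues from vanishing in the limit --- is handled cleanly, and your observation that the hypothesis $N\tilde T(\zeta)\succeq\zeta$ together with pointedness is exactly what forces $\mu_\varepsilon>1$ (hence $\tilde\lambda\ge 1$ after passing to the limit) is the right diagnosis of where the assumption is used.
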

\begin{lemma}\label{lemm 2.4}
	Suppose Assumption \ref{assm 2.2} holds. Consider a finite subset $\mathscr{B}$ of $S$ such that $\hat{\mathscr{K}}\subset \mathscr{B}$. Let $\tilde{\tau}(\mathscr{B})=\inf\{t>0:\xi_t\in \mathscr{B}\}$. Then for any pair of strategies $(\pi^1,\pi^2)\in \Pi^1\times \Pi^2$,  the following results hold.
	\begin{enumerate}
		\item [(i)] When Assumption \ref{assm 2.2} (a) holds:
		\begin{align}
		E^{\pi^1,\pi^2}_i\bigg[e^{\hat{\gamma} \tilde{\tau}(\mathscr{B})}V(\xi_{\tilde{\tau}(\mathscr{B})})\bigg]\leq V(i)~\forall~i\in \mathscr{B}^c.\label{eq 2.21}
		\end{align}
		\item [(ii)] When Assumption \ref{assm 2.2} (b) holds:
		\begin{align}
		E^{\pi^1,\pi^2}_i\bigg[e^{\int_{0}^{\tilde{\tau}(\mathscr{B})}\hat{\ell}(\xi_s) ds}V(\xi_{\tilde{\tau}(\mathscr{B})})\bigg]\leq V(i)~\forall~i\in \mathscr{B}^c.\label{eq 2.22}
		\end{align}
		\end{enumerate}
		\begin{proof}
			It is easy to see that the proof of (i) is analogous to that the proof of (ii) when we replace $\hat{\ell}$ with $\hat{\gamma}$. So, we  prove only part (ii). Suppose Assumption \ref{assm 2.2} (b) holds. Let  $n$ be  large enough  so that $\mathscr{B}\subset \hat{\mathscr{D}}_n$. Applying Dynkin's formula [\cite{GH4}, Appendix C.3], for $ i\in \mathscr{B}^c\cap \hat{\mathscr{D}}_n$ we have
			\begin{align*}
			&E^{\pi^1,\pi^2}_i\biggl[e^{\int_{0}^{\tilde{\tau}(\mathscr{B})\wedge T\wedge \tau_n}\hat{\ell}(\xi_s)ds}V(\tilde{\tau}(\mathscr{B})\wedge T\wedge \tau_n)\biggr]-V(i)\nonumber\\
			&=E^{\pi^1,\pi^2}_i\biggl[\int_{0}^{\tilde{\tau}(\mathscr{B})\wedge T\wedge \tau_n}e^{\int_{0}^{t}\hat{\ell}(\xi_s)ds}[\hat{\ell}(\xi_t)V(\xi_t)+\sum_{j\in S}q(j|\xi_t,\pi^1(t),\pi^2(t))V(j)]dt\biggr]\nonumber\\
			&\leq E^{\pi^1,\pi^2}_i \biggl[\int_{0}^{\tilde{\tau}(\mathscr{B})\wedge T\wedge \tau_n}e^{\int_{0}^{t}\hat{\ell}(\xi_s)ds}C I_{\hat{\mathscr{K}}}(\xi_t)dt\biggr]=0,
			\end{align*} where $\tau_n=\inf\{t\geq 0:\xi_t\notin \hat{\mathscr{D}}_n\}$\,.
			Now by Fatou's lemma, taking first $n\rightarrow\infty$ and then $T\rightarrow\infty$, we get the required result. 			
			\end{proof}
	\end{lemma}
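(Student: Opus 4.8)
The plan is to prove both parts by a single Lyapunov--Dynkin estimate, after observing that part~(i) is exactly the special case of part~(ii) in which the norm-like function $\hat{\ell}$ is replaced everywhere by the positive constant $\hat{\gamma}$ and \eqref{eq 2.6} by \eqref{eq 2.5}; so I would only carry out (ii) in detail. Fix a pair $(\pi^1,\pi^2)\in\Pi^1\times\Pi^2$ and a state $i\in\mathscr{B}^c$, and take $n$ large enough that $\mathscr{B}\subset\hat{\mathscr{D}}_n$ and $i\in\hat{\mathscr{D}}_n$, with $\tau_n=\inf\{t\ge0:\xi_t\notin\hat{\mathscr{D}}_n\}$ the first exit time of $\hat{\mathscr{D}}_n$; this spatial localizer, combined with a temporal truncation at a fixed $T>0$, is what keeps everything integrable in the presence of unbounded rates.

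First I would apply Dynkin's formula in the form of [\cite{GH4}, Appendix~C.3] to the function $V$ coupled with the multiplicative functional $t\mapsto e^{\int_0^t\hat{\ell}(\xi_s)\,ds}$ over the random interval $[0,\ \tilde{\tau}(\mathscr{B})\wedge T\wedge\tau_n]$, which yields
\[
E^{\pi^1,\pi^2}_i\!\Big[e^{\int_0^{\tilde{\tau}(\mathscr{B})\wedge T\wedge\tau_n}\hat{\ell}(\xi_s)\,ds}V(\xi_{\tilde{\tau}(\mathscr{B})\wedge T\wedge\tau_n})\Big]-V(i)
= E^{\pi^1,\pi^2}_i\!\Big[\textstyle\int_0^{\tilde{\tau}(\mathscr{B})\wedge T\wedge\tau_n} e^{\int_0^t\hat{\ell}(\xi_s)ds}\,\big(\hat{\ell}(\xi_t)V(\xi_t)+\sum_{k\in S}q(k\mid\xi_t,\pi^1(t),\pi^2(t))V(k)\big)\,dt\Big].
\]
Next, by \eqref{eq 2.6} the bracketed drift is bounded by $C\,I_{\hat{\mathscr{K}}}(\xi_t)$ pointwise on $K$, and since $\hat{\mathscr{K}}\subset\mathscr{B}$ we have $\xi_t\in\mathscr{B}^c\subset\hat{\mathscr{K}}^c$ for all $t<\tilde{\tau}(\mathscr{B})$, so $I_{\hat{\mathscr{K}}}(\xi_t)=0$ on the interval of integration; hence the right-hand side is $\le0$ and we obtain, for every $T>0$ and every admissible $n$,
\[
E^{\pi^1,\pi^2}_i\!\Big[e^{\int_0^{\tilde{\tau}(\mathscr{B})\wedge T\wedge\tau_n}\hat{\ell}(\xi_s)\,ds}V(\xi_{\tilde{\tau}(\mathscr{B})\wedge T\wedge\tau_n})\Big]\le V(i).
\]
Finally, since $V\ge1$ and $\hat{\ell}\ge0$ the random variable inside the expectation is nonnegative, and since Assumption~\ref{assm 2.1} guarantees non-explosion we have $\tau_n\uparrow\infty$ a.s., so that $\tilde{\tau}(\mathscr{B})\wedge T\wedge\tau_n\to\tilde{\tau}(\mathscr{B})$ a.s.\ as first $n\to\infty$ and then $T\to\infty$; Fatou's lemma applied to the left-hand side then gives \eqref{eq 2.22}, and \eqref{eq 2.21} follows verbatim using \eqref{eq 2.5} with $\hat{\ell}\equiv\hat{\gamma}$.

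The step I expect to be the main obstacle is the rigorous justification of the Dynkin formula above: the transition rates are unbounded, the strategies are history-dependent, and the functional is an exponential of a time integral, so one must verify the integrability hypotheses of the cited result — which is precisely the role played by the localizer $\tau_n$, the truncation $T$, and the non-explosion supplied by Assumption~\ref{assm 2.1}. A secondary, more minor point is that, on the event $\{\tilde{\tau}(\mathscr{B})=\infty\}$, the finiteness of the left-hand side together with $\hat{\ell}$ being norm-like (resp.\ $\hat{\gamma}>0$) forces this event to be $P^{\pi^1,\pi^2}_i$-null, so that $\xi_{\tilde{\tau}(\mathscr{B})}$ is a.s.\ well defined and the bounds \eqref{eq 2.21}--\eqref{eq 2.22} are meaningful as stated.
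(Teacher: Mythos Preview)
Your proof is correct and follows essentially the same route as the paper: localize in space via $\tau_n$ and in time via $T$, apply Dynkin's formula to $V$ with the multiplicative functional $e^{\int_0^t\hat{\ell}(\xi_s)ds}$, use the Lyapunov bound \eqref{eq 2.6} together with $\hat{\mathscr{K}}\subset\mathscr{B}$ to make the drift term nonpositive, then pass to the limit by Fatou. Your additional remarks on the integrability hypotheses for Dynkin and on the nullity of $\{\tilde{\tau}(\mathscr{B})=\infty\}$ are helpful elaborations but do not change the argument.
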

\begin{lemma}\label{lemm 2.3}
	Suppose Assumptions \ref{assm 2.1}, \ref{assm 2.2}, and \ref{assm 2.3} hold. Then for $n\in \mathbb{N}$, there exists a pair $(\rho_{n},\psi_{n})\in \mathbb{R}\times \mathcal{B}_{\hat{\mathscr{D}}_n}^{+}$, $\psi_{n}\gneq 0$ for the following Dirichlet nonlinear eigenequation
	\begin{align}
\rho_{n}\psi_{n}(i)&=\sup_{\mu\in P(A(i))}\inf_{\nu\in P(B(i))}\bigg[\sum_{j\in S}\psi_{n}(j)q(j|i,\mu,\nu)+c(i,\mu,\nu)\psi_{n}(i)\bigg]\nonumber\\
&=\inf_{\nu\in P(B(i))}\sup_{\mu\in P(A(i))}\bigg[\sum_{j\in S}\psi_{n}(j)q(j|i,\mu,\nu)+c(i,\mu,\nu)\psi_{n}(i)\bigg].\label{eq 2.24}
	\end{align}

	Also, for each $i\in S$ such that $\psi_{n}(i)>0$, we have
	\begin{align}
	\rho_{n}\leq \sup_{\pi^1\in \Pi^1}\inf_{\pi^2\in \Pi^2}\limsup_{T\rightarrow \infty}\frac{1}{T}\log E^{\pi^1,\pi^2}_i\bigg[e^{\int_{0}^{T}c(\xi_t,\pi^1(t),\pi^2(t))dt}\bigg].\label{eq 2.16}
	\end{align}
Additionally the sequence $\{\rho_{n}\}$ is bounded satisfying $\liminf_{n\rightarrow \infty}\rho_n\geq 0$.
	
\end{lemma}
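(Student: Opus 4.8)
The plan is threefold: (i) realize $(\rho_{n},\psi_{n})$ as a principal eigenpair of a nonlinear operator on the finite‑dimensional space $\mathcal{B}_{\hat{\mathscr{D}}_n}$ via Theorem~\ref{theo 2.1}; (ii) deduce \eqref{eq 2.16} from a Feynman–Kac identity attached to the eigenequation; (iii) extract the bounds on $\{\rho_{n}\}$ from the stability conditions. For step (i), fix $n$. Since $\hat{\mathscr{D}}_n$ is finite and $c$ is continuous on each compact $A(i)\times B(i)$ (Assumption~\ref{assm 2.3}(i)), choose a constant $\lambda>1+\max_{i\in\hat{\mathscr{D}}_n}\max_{(a,b)}c(i,a,b)$, so that $\tilde{c}:=c-\lambda$ satisfies $\tilde{c}<-1$ on $\hat{\mathscr{D}}_n$. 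Proposition~\ref{prop 2.1} (applied with this $\tilde{c}$ and $\delta=1$) then assigns to each $g\in\mathcal{B}_{\hat{\mathscr{D}}_n}$ a unique solution $\varphi$ of \eqref{eq 2.7}, and I set $\tilde{T}g:=\varphi$. From the stochastic representation \eqref{eq 2.8} one reads off immediately that $\tilde{T}(\mathcal{B}_{\hat{\mathscr{D}}_n}^{+})\subset\mathcal{B}_{\hat{\mathscr{D}}_n}^{+}$, that $\tilde{T}$ is order‑preserving and $1$‑homogeneous (both inherited from the monotonicity and linearity of the integrand in $g$), and that $\|\tilde{T}g_1-\tilde{T}g_2\|_{\hat{\mathscr{D}}_n}\le\|g_1-g_2\|_{\hat{\mathscr{D}}_n}$; finite‑dimensionality then makes $\tilde{T}$ completely continuous.

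To verify the remaining hypothesis of Theorem~\ref{theo 2.1}, take $\zeta:=I_{\hat{\mathscr{D}}_n}\in\mathcal{B}_{\hat{\mathscr{D}}_n}^{+}$, $\zeta\neq 0$. For $i\in\hat{\mathscr{D}}_n$, restricting the integral in \eqref{eq 2.8} to $[0,1\wedge\tau_n]$, using $\tilde{c}\ge -M$ on $\hat{\mathscr{D}}_n$ with $M:=\max_{\hat{\mathscr{D}}_n}\max_{(a,b)}|\tilde{c}|$, and noting that the holding rate at any state of $\hat{\mathscr{D}}_n$ is at most $\bar{q}:=\max_{\hat{\mathscr{D}}_n}q^{*}$, so that $P^{\pi^1,\pi^2}_i(\tau_n\ge 1)\ge e^{-\bar{q}}$ for every $(\pi^1,\pi^2)$, one gets $\tilde{T}\zeta(i)\ge e^{-M-\bar{q}}$, i.e. $N\tilde{T}\zeta\succeq\zeta$ with $N:=e^{M+\bar{q}}$. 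Theorem~\ref{theo 2.1} then yields $\psi_{n}\in\mathcal{B}_{\hat{\mathscr{D}}_n}^{+}$, $\psi_{n}\gneq 0$, and $\tilde{\lambda}_{n}>0$ with $\tilde{T}\psi_{n}=\tilde{\lambda}_{n}\psi_{n}$. Unfolding the definition of $\tilde{T}$ and using $1$‑homogeneity, this reads $\sup_{\mu}\inf_{\nu}\big[\sum_{j\in S}\psi_{n}(j)q(j|i,\mu,\nu)+(c(i,\mu,\nu)-\lambda)\psi_{n}(i)\big]=-\tilde{\lambda}_{n}^{-1}\psi_{n}(i)$ on $\hat{\mathscr{D}}_n$, and moving the term $-\lambda\psi_{n}(i)$ out of the extrema this is exactly \eqref{eq 2.24} with $\rho_{n}:=\lambda-\tilde{\lambda}_{n}^{-1}$, the equality of $\sup\inf$ and $\inf\sup$ following from Fan's minimax theorem as in Proposition~\ref{prop 2.1}. (If needed, strict positivity $\psi_{n}>0$ on $\hat{\mathscr{D}}_n$ follows from the irreducibility in Assumption~\ref{assm 2.2}: at a hypothetical zero of $\psi_{n}$ the eigenequation forces all transition rates into $\{\psi_{n}>0\}$ to vanish, a contradiction.)

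For \eqref{eq 2.16}, Assumption~\ref{assm 2.3} and compactness give a pair of minimax selectors $(\pi^{*1}_{n},\pi^{*2}_{n})\in\Pi^{s}_1\times\Pi^{s}_2$ in \eqref{eq 2.24}. Using the outer‑maximising selector $\pi^{*1}_{n}$, for every $\pi^2$ and $i\in\hat{\mathscr{D}}_n$ one has $\rho_{n}\psi_{n}(i)\le\sum_{j\in S}\psi_{n}(j)q(j|i,\pi^{*1}_{n}(i),\pi^2)+c(i,\pi^{*1}_{n}(i),\pi^2)\psi_{n}(i)$; applying the Dynkin formula to $e^{\int_0^t(c-\rho_{n})ds}\psi_{n}(\xi_t)$ up to $T\wedge\tau_n$ as in \eqref{eq 2.11}, the integrand is nonnegative, so $E^{\pi^{*1}_{n},\pi^2}_i\big[e^{\int_0^{T\wedge\tau_n}(c-\rho_{n})ds}\psi_{n}(\xi_{T\wedge\tau_n})\big]\ge\psi_{n}(i)$. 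Since $\psi_{n}$ vanishes off $\hat{\mathscr{D}}_n$, the term on $\{\tau_n\le T\}$ is zero, whence $\|\psi_{n}\|_{\hat{\mathscr{D}}_n}\,E^{\pi^{*1}_{n},\pi^2}_i\big[e^{\int_0^{T}c\,ds}\big]\ge e^{\rho_{n}T}\psi_{n}(i)$. Taking $\tfrac1T\log$, then $T\to\infty$ for $i$ with $\psi_{n}(i)>0$, then the infimum over $\pi^2$, and finally bounding by the supremum over $\pi^1$, gives \eqref{eq 2.16}.

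Finally, the bounds on $\{\rho_{n}\}$. For the upper bound: under Assumption~\ref{assm 2.2}(a) the right side of \eqref{eq 2.16} is at most $\|c\|_{\infty}$; under Assumption~\ref{assm 2.2}(b), $\hat{\ell}-\max_{(a,b)}c(\cdot,a,b)$ is norm‑like hence bounded below, so $c\le\hat{\ell}+m_0$ for some constant $m_0$, and a Dynkin/Fatou estimate with the weight $V$ (analogous to the proof of Lemma~\ref{lemm 2.4}, using \eqref{eq 2.6}) gives $E^{\pi^1,\pi^2}_i[e^{\int_0^{T}c\,ds}V(\xi_T)]\le e^{(C+m_0)T}V(i)$, so the right side of \eqref{eq 2.16} is at most $C+m_0$; either way $\sup_{n}\rho_{n}<\infty$. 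For the lower bound, repeat the Dynkin argument with the outer‑minimising selector $\pi^{*2}_{n}$ (now the integrand is nonpositive) to obtain, for all $\pi^1$, $E^{\pi^1,\pi^{*2}_{n}}_i[e^{\int_0^{T}c\,ds}\psi_{n}(\xi_T)I_{\{\tau_n>T\}}]\le e^{\rho_{n}T}\psi_{n}(i)$; using $c\ge 0$ and $\psi_{n}\ge(\min_{\hat{\mathscr{D}}_n}\psi_{n})I_{\hat{\mathscr{D}}_n}$ this yields $\rho_{n}\ge\sup_{\pi^1}\limsup_{T}\tfrac1T\log P^{\pi^1,\pi^{*2}_{n}}_{i_0}(\tau_n>T)$. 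Since Assumption~\ref{assm 2.2} is a \emph{blanket} condition, the state process is uniformly (over all strategies) positive recurrent, and a Lyapunov/maximal‑inequality argument shows that the exit rate of $\{\xi_t\}$ from $\hat{\mathscr{D}}_n$ is bounded above, uniformly in $(\pi^1,\pi^2)$, by some $\lambda_n$ with $\lambda_n\to 0$ as $\hat{\mathscr{D}}_n\uparrow S$; hence $P^{\pi^1,\pi^{*2}_{n}}_{i_0}(\tau_n>T)\ge e^{-\lambda_n T}$, $\rho_{n}\ge-\lambda_n$, and $\liminf_{n}\rho_{n}\ge 0$. The main obstacle is precisely this last estimate — quantifying, uniformly over the varying strategies, that truncation to $\hat{\mathscr{D}}_n$ costs only a vanishing exit rate; this genuinely uses the strength of the blanket stability in Assumption~\ref{assm 2.2}, since the no‑explosion condition of Assumption~\ref{assm 2.1} alone would only leave a fixed negative floor of order $-q^{*}(i_0)$ for $\rho_{n}$.
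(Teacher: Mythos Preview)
Your treatment of the existence of the eigenpair and of the estimate \eqref{eq 2.16} is essentially the paper's argument (with cosmetic differences in the test function and in the Lipschitz constant for $\tilde{T}$), and your upper bound on $\rho_n$ via the Lyapunov inequality is also correct. The genuine gap is in the last step, and you flag it yourself: the claim that, uniformly over all strategies, $P^{\pi^1,\pi^{*2}_n}_{i_0}(\tau_n>T)\ge e^{-\lambda_n T}$ with $\lambda_n\to 0$ is asserted but not proved. This is a statement about the principal Dirichlet eigenvalue of the killed process on $\hat{\mathscr{D}}_n$ going to zero \emph{uniformly} over the family of generators indexed by $(\pi^1,\pi^2)$; blanket stability gives uniform positive recurrence, but converting that into a uniform vanishing of the exit rate is nontrivial (it requires at least a quantitative control on the probability of hitting $\hat{\mathscr{D}}_n^c$ before returning to a fixed compact, uniformly in the control, and then a regeneration bound). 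Your argument also tacitly needs $\min_{\hat{\mathscr{D}}_n}\psi_n>0$; the irreducibility hypothesis concerns the chain on all of $S$, not the chain killed on $\hat{\mathscr{D}}_n^c$, so the one-step spreading argument you sketch does not immediately give strict positivity of $\psi_n$ throughout $\hat{\mathscr{D}}_n$.

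The paper circumvents both difficulties with a completely different route. First it normalizes $\psi_n(i_0)=1$ and reads off from \eqref{eq 2.24} at $i_0$ (using $c\ge 0$, $\psi_n\ge 0$) that $\rho_n\ge \sup_\mu\inf_\nu q(i_0|i_0,\mu,\nu)$, a crude but explicit lower bound. For $\liminf_n\rho_n\ge 0$ it argues by contradiction: along a subsequence with $\rho_n\to\hat\rho<0$, Assumption~\ref{assm 2.3}(iii) (the state $i_0$ from which all states are reachable in one step) together with the eigenequation at $i_0$ yields a uniform bound $\psi_n(j)\le \sup_{\mu,\nu}\tfrac{-q(i_0|i_0,\mu,\nu)}{q(j|i_0,\mu,\nu)}$, allowing a diagonal limit $\psi_n\to\psi$ with $\psi(i_0)=1$. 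Passing to the limit in \eqref{eq 2.24} gives $0>\hat\rho\,\psi(i)\ge\sum_j\psi(j)q(j|i,\mu,\pi^{*2}(i))$, so $\{\psi(\xi_t)\}$ is a bounded nonnegative supermartingale under $(\pi^1,\pi^{*2})$; Doob's convergence plus recurrence (from Assumption~\ref{assm 2.2}) force $\psi\equiv 1$, and plugging back contradicts $\hat\rho<0$. This compactness--supermartingale--recurrence mechanism is the idea you are missing; it replaces the quantitative exit-rate estimate entirely and is where Assumption~\ref{assm 2.3}(iii) is actually used.
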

\begin{proof}
	Let $\delta > 0$. Set $ \tilde{c} = \displaystyle{ c - \sup_{\hat{\mathscr{D}}_n}c - \delta}$\,. Let $\tilde{T}:\mathcal{B}_{\hat{\mathscr{D}}_n}\rightarrow\mathcal{B}_{\hat{\mathscr{D}}_n}$ be an operator defined as
	\begin{align}
	\tilde{T}(g)(i):=\sup_{\pi^1\in \Pi^1}\inf_{\pi^2\in \Pi^2} E^{\pi^1,\pi^2}_i\biggl[\int_{0}^{\tau_n}e^{\int_{0}^{t}\tilde{c}(\xi_s,\pi^1(s),\pi^2(s))ds}g(\xi_t)dt\biggr],~i\in \hat{\mathscr{D}}_n,\label{eq 2.17}
	\end{align} with $\tilde{T}(g)(i)=0~\text{ for }~i\in \hat{\mathscr{D}}_n^c$\,.
Let $g_1,g_2\in \mathcal{B}_{\hat{\mathscr{D}}_n}$ such that $g_1\succeq g_2$, i.e., $g_1(i) \geq g_2(i)$ for each $i$. Also, let $\tilde{T}(g_1) = \hat{\varphi}_1$ and $\tilde{T}(g_2) = \hat{\varphi}_2$\,. Then there exists $\hat{\pi}^{*1}\in \Pi^s_1$ such that 
\begin{align*}
-g_2(i)&=\sup_{\mu\in P(A(i))}\inf_{\nu\in P(B(i))}\biggl[\sum_{j\in S}\hat{\varphi}_2(j)q(j|i,\mu,\nu)+ \tilde{c}(i,\mu,\nu)\hat{\varphi}_2(i)\biggr]\nonumber\\
&=\inf_{\nu\in P(B(i))}\biggl[\sum_{j\in S}\hat{\varphi}_2(j)q(j|i,\hat{\pi}^{*1}(i),\nu)+ \tilde{c}(i,\hat{\pi}^{*1}(i),\nu)\hat{\varphi}_2(i)\biggr]~\forall i\in \hat{\mathscr{D}}_n\,.
\end{align*} Also, from the proof of Proposition \ref{prop 2.1}, we have
\begin{equation*}
\hat{\varphi}_2(i) = \tilde{T}(g_2)(i) = \inf_{\pi^2\in \Pi^2}E^{\hat{\pi}^{*1},\pi^2}_i\biggl[\int_{0}^{\tau_n}e^{\int_{0}^{t} \tilde{c}(\xi_s,\hat{\pi}^{*1}(\xi_s),\pi^2(s))ds}g_2(\xi_t)dt\biggr]\,.
\end{equation*} Thus, we deduce that
	\begin{align*}
\tilde{T}(g_1)(i)-	\tilde{T}(g_2)(i) = & \sup_{\pi^1\in \Pi^1}\inf_{\pi^2\in \Pi^2}E^{\pi^1,\pi^2}_i\biggl[\int_{0}^{\tau_n}e^{\int_{0}^{t} \tilde{c}(\xi_s,\pi^1(s),\pi^2(s))ds}g_1(\xi_t)dt\biggr]\nonumber\\
	& -\sup_{\pi^1\in \Pi^1}\inf_{\pi^2\in \Pi^2}E^{\pi^1,\pi^2}_i\biggl[\int_{0}^{\tau_n}e^{\int_{0}^{t} \tilde{c}(\xi_s,\pi^1(s),\pi^2(s))ds}g_2(\xi_t)dt\biggr]\\
	\geq & \inf_{\pi^2\in \Pi^2}E^{\hat{\pi}^{*1},\pi^2}_i\biggl[\int_{0}^{\tau_n}e^{\int_{0}^{t} \tilde{c}(\xi_s,\hat{\pi}^{*1}(\xi_s),\pi^2(s))ds}g_1(\xi_t)dt\biggr]\\
	&-\inf_{\pi^2\in \Pi^2}E^{\hat{\pi}^{*1},\pi^2}_i\biggl[\int_{0}^{\tau_n}e^{\int_{0}^{t} \tilde{c}(\xi_s,\hat{\pi}^{*1}(\xi_s),\pi^2(s))ds}g_2(\xi_t)dt\biggr]\\
 \geq & \inf_{\pi^2\in \Pi^2}E^{\hat{\pi}^{*1},\pi^2}_i\biggl[\int_{0}^{\tau_n}e^{\int_{0}^{t} \tilde{c}(\xi_s,\hat{\pi}^{*1}(\xi_s),\pi^2(s))ds}(g_1(\xi_t)-g_2(\xi_t))dt\biggr].
	\end{align*}
This gives us $\tilde{T}(g_1)\succeq \tilde{T}(g_2)$. Clearly $\tilde{T}(\lambda g) = \lambda \tilde{T}(g)$ for all $\lambda\geq 0$. Since $ \tilde{c} < -\delta$,  there exists a constant $\alpha_2>0$ such that $$\|\tilde{T}(\hat{g}_1) - \tilde{T}(\hat{g}_2)\|_{\hat{\mathscr{D}}_n}\leq \alpha_2 \|\hat{g}_1-\hat{g}_2\|_{\hat{\mathscr{D}}_n},~\text{ for any } \hat{g}_1, \hat{g}_2 \in \mathcal{B}_{\hat{\mathscr{D}}_n}.$$  Thus $\tilde{T}$ is continuous. Let $\{g_m\}$ be a bounded sequence in $\mathcal{B}_{\hat{\mathscr{D}}_n}$. Then from (\ref{eq 2.17}), for some constant $\alpha_3>0$ such that $\|\tilde{T}g_m\|_{\hat{\mathscr{D}}_n}\leq \alpha_3$. Now applying diagonalization arguments,  there exist a subsequence of $\{\tilde{T}g_m\}$, ( denoting by the same sequence without loss of generality) and a function $\phi\in \mathcal{B}_{\hat{\mathscr{D}}_n}$ such that $\|\tilde{T}g_m -\phi\|_{\hat{\mathscr{D}}_n}\rightarrow 0$ as $m\rightarrow\infty$. Hence the map $	\tilde{T}:\mathcal{B}_{\hat{\mathscr{D}}_n}\rightarrow\mathcal{B}_{\hat{\mathscr{D}}_n}$ is compact. Therefore it is completely continuous. Let $g\in \mathcal{B}_{\hat{\mathscr{D}}_n}$ such that $g(i_0)=1$ and $g(j)=0$ for all $j\neq i_0$. Then by (\ref{eq 2.17}), we have
	\begin{align*}
		\tilde{T}(g)(i_0)&\geq\sup_{\pi^1\in \Pi^1}\inf_{\pi^2\in \Pi^2}E^{\pi^1,\pi^2}_{i_0}\biggl[\int_{0}^{T_1}e^{\int_{0}^{t} \tilde{c}(\xi_s,\pi^1(s),\pi^2(s))ds}g(\xi_t)dt\biggr]\\
	&\geq \frac{g(i_0)}{\|\tilde{c}\|_{\hat{\mathscr{D}}_n}}\sup_{\pi^1\in \Pi^1}\inf_{\pi^2\in \Pi^2}E^{\pi^1,\pi^2}_{i_0}\bigg[1-e^{-\| \tilde{c}\|_{\hat{\mathscr{D}}_n}T_1}\bigg]\\
	&=g(i_0)\frac{1}{\| \tilde{c}\|_{\hat{\mathscr{D}}_n}+q^{*}(i_0)},
	\end{align*}
	where $T_1$ is the first jump time (clearly, $T_1\leq \tau_n$).
	Thus  $N\tilde{T}(g)\succeq g$ where $N={\| \tilde{c}\|_{\hat{\mathscr{D}}_n}+q^{*}(i_0)}>0$.
	Therefore by Theorem \ref{theo 2.1}, there exists a nontrivial $\psi_{n}\in \mathcal{B}_{\hat{\mathscr{D}}_n}^{+}$ where $\psi_{n}\neq 0$ and a constant $\lambda_{\hat{\mathscr{D}}_n}>0$ such that $	\tilde{T}(\psi_{n})=\lambda_{\hat{\mathscr{D}}_n}\psi_{n}$, i.e.,
	\begin{align*}
	\tilde{\rho}_{n}\psi_{n}(i)=\sup_{\mu\in P(A(i))}\inf_{\nu\in P(B(i))}\biggl[\psi_{n}(j)q(j|i,\mu,\nu)+ \tilde{c}(i,\mu,\nu)\psi_{n}(i)\biggr]~\forall i\in \hat{\mathscr{D}}_n,
	\end{align*}
	where $ \tilde{\rho}_{n}=-[\lambda_{\hat{\mathscr{D}}_n}]^{-1}$.
Therefore in terms of $c$, we have
\begin{align*}
	\rho_{n}\psi_{n}(i)=\sup_{\mu\in P(A(i))}\inf_{\nu\in P(B(i))}\biggl[\psi_{n}(j)q(j|i,\mu,\nu)+ c(i,\mu,\nu)\psi_{n}(i)\biggr]~\forall i\in \hat{\mathscr{D}}_n,
	\end{align*}
where $\rho_n = \tilde{\rho}_n + \displaystyle{ \sup_{\hat{\mathscr{D}}_n} c} + \delta$.
Now by Fan's minimax theorem, see [\cite{Fan}, Theorem 3], we have
	\begin{align*}
\rho_{n}\psi_{n}(i)&=\sup_{\mu\in P(A(i))}\inf_{\nu\in P(B(i))}\biggl[\psi_{n}(j)q(j|i,\mu,\nu)+c(i,\mu,\nu)\psi_{n}(i)\biggr]\\
&=\inf_{\nu\in P(B(i))}\sup_{\mu\in P(A(i))}\biggl[\psi_{n}(j)q(j|i,\mu,\nu)+c(i,\mu,\nu)\psi_{n}(i)\biggr]~\forall i\in \hat{\mathscr{D}}_n.
\end{align*}
This proves that (\ref{eq 2.24}) admits a unique solution. As before by the continuity of $c, q$ and the compactness of $A(i)$, there exists $\pi^{*1}_n \in \Pi^s_1$ such that (\ref{eq 2.24}), can be written as
	 \begin{align}
	 \rho_{n}\psi_{n}(i)=\inf_{\nu\in P(B(i))}\biggl[\psi_{n}(j)q(j|i,\pi^{*1}_n(i),\nu)+c(i,\pi^{*1}_n(i),\nu)\psi_{n}(i)\biggr]~\forall i\in \hat{\mathscr{D}}_n.\label{eq 2.19}
	 \end{align}
	Now applying Dynkin's formula (see [\cite{BP}, Lemma 3.1]) and using (\ref{eq 2.19}), we get
	\begin{align}
\psi_{n}(i)&\leq E^{\pi^{*1}_n,\pi^2}_i\biggl[e^{\int_{0}^{T}({c}(\xi_s,\pi^{*1}_n(\xi_s),\pi^2(s))-\rho_{n})ds}\psi_{n}(\xi_{T})I_{\{T<\tau_n\}}\biggr]\nonumber\\
&\leq (\sup_{\hat{\mathscr{D}}_n}\psi_{n})E^{\pi^{*1}_n,\pi^2}_i\biggl[e^{\int_{0}^{T}({c}(\xi_s,\pi^{*1}_n(\xi_s),\pi^2(s))-\rho_{n})ds}\biggr].\label{eq 2.20}
	\end{align}
		If $\psi_n(i)>0$ then by taking logarithm on the both sides in (\ref{eq 2.20}), dividing by $T$ and letting $T\rightarrow\infty$, we get
	\begin{align*}
\rho_{n}&\leq \limsup_{T\rightarrow \infty}\frac{1}{T}\log E^{\pi^{*1}_n,\pi^2}_i\biggl[e^{\int_{0}^{T} {c}(\xi_s,\pi^{*1}_n(\xi_s),\pi^2(s))ds}\biggr].
	\end{align*}
	Since $\pi^2\in \Pi^2$ is arbitrary, we obtain
		\begin{align*}
	\rho_{n}&\leq \inf_{\pi^2\in \Pi^2}\limsup_{T\rightarrow \infty}\frac{1}{T}\log E^{\pi^{*1}_n,\pi^2}_i\biggl[e^{\int_{0}^{T} {c}(\xi_s,\pi^{*1}_n(\xi_s),\pi^2(s))ds}\biggr]\\
	&\leq \sup_{\pi^1\in \Pi^1}\inf_{\pi^2\in \Pi^2}\limsup_{T\rightarrow \infty}\frac{1}{T}\log E^{\pi^{1},\pi^2}_i\biggl[e^{\int_{0}^{T}c(\xi_s,\pi^{1}(s),\pi^2(s))ds}\biggr].
	\end{align*}

	We now show that $J(i,c,\pi^1,\pi^2)$ is finite for every $(\pi^1,\pi^2)\in\Pi^1\times\Pi^2$ and $i\in S$.
	We only  provide a proof under Assumption 2.2 (b) and the proof under Assumption 2.2 (a) would be analogous. Now from (\ref{eq 2.6}) we get
	\begin{align}
	\sup_{(a,b)\in A(i)\times B(i)}\sum_{j\in S}V(j)q(j|i,a,b)\leq (C-\hat{\ell}(i))V(i) ~\forall i\in S.\label{eq 2.25}
	\end{align}
	Then by Dynkin formula, we get
	\begin{align}
	E^{\pi^1,\pi^2}_i\bigg[e^{\int_{0}^{T\wedge\tau_n}(\hat{\ell}(\xi_t)-C)dt}V(\xi_{T\wedge\tau_n})\bigg]\leq V(i)~\forall i\in S.\label{eq 2.26}
	\end{align}
By Fatou's lemma, taking $n\rightarrow\infty$ in (\ref{eq 2.26}), we get
	\begin{align*}
	E^{\pi^1,\pi^2}_i\bigg[e^{\int_{0}^{T}(\hat{\ell}(\xi_t)-C)dt}V(\xi_{T})\bigg]\leq V(i)~\forall i\in S.
	\end{align*}

	 Now, since $V\geq 1$, taking logarithm on both sides in the above equation, dividing both sides by $T$ and letting $T\rightarrow\infty $, we obtain
	$$J(i,\hat{\ell},\pi^1,\pi^2)\leq C~\text{ for all}~i\in S.$$
Since, $\hat{\ell}-\displaystyle \sup_{(a,b)\in A(i)\times B(i)}c(\cdot,a,b)$ is norm-like, we have 	$\displaystyle  \sup_{(a,b)\in A(i)\times B(i)}c(i,a,b)\leq \hat{\ell}(i)+k_1$ \, $\forall$ \,  $i\in S$ for some constant $k_1$. Hence we get
	\begin{align}
	J(i,c,\pi^1,\pi^2)\leq C+k_1~~\forall (\pi^1,\pi^2)\in \Pi^1\times \Pi^2, \forall i\in S.\label{eq 2.27}
	\end{align}
It is clear from (\ref{eq 2.16}) and (\ref{eq 2.27}) that $\rho_n$ has an upper bound. Next we prove that $\rho_{n}$ is bounded below. By using assumption \ref{assm 2.3} (iii) and (\ref{eq 2.24}), we have $\psi_n(i_0)>0$. Thus normalizing $\psi_{n}$, we have $\psi_{n}(i_0)=1$. Also, since $c\geq 0$, by (\ref{eq 2.24}) we get
	\begin{align*}
	\rho_{n}&\geq \sup_{\mu\in P(A(i_0))}\inf_{\nu\in P(B(i_0))}\bigg[\sum_{j\in S}\psi_{n}(j)q(j|i_0,\mu,\nu)\bigg]\\
	&\geq \sup_{\mu\in P(A(i_0))}\inf_{\nu\in P(B(i_0))}q(i_0|i_0,\mu,\nu).
	\end{align*}
	So, $\{\rho_{n}\}$ is bounded below.
	Now we claim that $\hat{\rho} := \displaystyle \liminf_{n\rightarrow \infty}\rho_{n}\geq 0$. If not, then on contrarary, $\hat{\rho}<0$. So, along some subsequence, we have (with an abuse of notation, we use the same sequence) $\rho_{n}\rightarrow\hat{\rho}$, as $n\rightarrow\infty$ and for large $n$, $\rho_n<0$. Let $\pi_n^{*2}$ be outer minimizing selector of (\ref{eq 2.24}). Thus, using (\ref{eq 2.24}), for large enough $n$, we have
	\begin{align*}
	0>\rho_{n}\psi_{n}(i_0)&=\sup_{\mu\in P(A(i_0))}\bigg[\sum_{j\in S}\psi_{n}(j)q(j|i_0,\mu,\pi^{*2}_n(i_0))+c(i_0,\mu,\pi_n^{*2}(i_0))\psi_{n}(i_0)\bigg]\\&\geq \sup_{\mu\in P(A(i_0))} \bigg[\sum_{j\in S}\psi_{n}(j)q(j|i_0,\mu,\pi^{*2}_n(i_0))\bigg]\\
	&\geq \sum_{j\in S}\psi_{n}(j)q(j|i_0,\mu,\pi_n^{*2}(i_0)).
	\end{align*}
	Now by Assumption \ref{assm 2.3} (iii), from the above equation, we get
	 \begin{align*}
	\psi_{n}(j)\leq \frac{-q(i_0|i_0,\mu,\pi_n^{*2}(i_0))}{q(j|i_0,\mu,\pi_n^{*2}(i_0))}\leq \sup_{\mu\in P(A(i_0))}\sup_{\nu\in P(B(i_0))}\frac{-q(i_0|i_0,\mu,\nu)}{q(j|i_0,\mu,\nu)}~\text{ for }~j\neq i_0.
	\end{align*}
	So, by diagonalization argument we say, there exist a subsequence (denoting by the same sequence with an abuse of notation) and a function $\psi$ with $\psi(i_0)=1$ such that $\psi_{n}(i)\rightarrow \psi(i)$, as $n\rightarrow \infty$ for all $i\in S$. By our assumption $A(i)$ is compact for each $i\in S$ and $\pi_n^{*2}$ is outer minimizing selector of (\ref{eq 2.24}). Hence we have  $\pi^{*2}_n(i)\rightarrow \pi^{*2}(i)$, for all $i\in S $, as $n \to \infty$. Therefore we have
	\begin{align}
	\rho_n\psi_{n}(i)
	&\geq \sum_{j\in S}\psi_{n}(j)q(j|i,\mu,\pi^{*2}_n(i))+c(i,\mu,\pi^{*2}_n(i))\psi_{n}(i).\label{eq 2.28}
	\end{align}
	So, taking $n\rightarrow \infty$ in the above equation, we obtain
	\begin{align}
	0>\hat{\rho}\psi(i)&\geq \sum_{j\in S}\psi(j)q(j|i,\mu,\pi^{*2}(i))+c(i,\mu,\pi^{*2}(i))\psi(i)\nonumber\\
	&\geq \sum_{j\in S}\psi(j)q(j|i,\mu,\pi^{*2}(i)).\label{eq 2.29}
	\end{align}
	Let $\pi^1\in \Pi^s_1$. Applying Dynkin formula and using (\ref{eq 2.29}), we obtain
	\begin{align*}
	&	E^{\pi^1,\pi^{*2}}_i[\psi(\xi_{t\wedge\tau_n})]-\psi(i)\\
	&=E^{\pi^1,\pi^{*2}}_i\biggl[\int_{0}^{t\wedge\tau_n}\sum_{j\in S}\psi(j)q(j|\xi_s,\pi^1(\xi_s),\pi^{*2}(\xi_s))ds\biggr]\leq 0.
	\end{align*}
	Now, using dominated convergence theorem, taking $n\rightarrow\infty$, we get $E^{\pi^1,\pi^{*2}}_i[\psi(\xi_{t})]\leq \psi(i)$.
	So, with respect to the canonical filtration of $\xi$, $\{\psi(\xi_t)\}$ is supermartingale. So, by Doob's martingale convergence theorem as $t\rightarrow\infty$, $\psi(\xi_t)$ converges. Now by Assumption \ref{assm 2.2}, $\xi$ is recurrent. Thus the skeleton process $\{\xi_n:n\in \mathbb{N}\}$ is also recurrent (see for details [\cite{A}, Proposition 5.1.1]). This implies, that the process $\{\xi_n:n\in \mathbb{N}\}$ visits every state of $S$ infinitely often. But this is possible only if $\psi\equiv 1$. Since $c\geq 0$, this contradicts (\ref{eq 2.29}). Thus, $\displaystyle \liminf_{n\rightarrow \infty}\rho_n\geq 0$.
	\end{proof}
\begin{lemma}\label{lemm 2.6}
	Suppose Assumptions \ref{assm 2.1}, \ref{assm 2.2}, and \ref{assm 2.3} hold. Then there exists $(\rho,\psi^{*})\in \mathbb{R}_+\times L^\infty_V$ with $\psi^{*}>0$, such that
	\begin{align}
	\rho\psi^{*}(i)&=\sup_{\mu\in P(A(i))}\inf_{\nu\in P(B(i))}\bigg[\sum_{j\in S}\psi^{*}(j)q(j|i,\mu,\nu)+c(i,\mu,\nu)\psi^{*}(i)\bigg]\nonumber\\
	&=\inf_{\nu\in P(B(i))}\sup_{\mu\in P(A(i))}\bigg[\sum_{j\in S}\psi^{*}(j)q(j|i,\mu,\nu)+c(i,\mu,\nu)\psi^{*}(i)\bigg], ~i\in S.\label{eq 2.30}
	\end{align}
	Also, the solution $(\rho,\psi^{*})$ has the following characteristic.
	\begin{enumerate}
		\item [(i)] $\rho\leq \displaystyle  \inf_{i\in S}\displaystyle  \displaystyle  \sup_{\pi^1\in \Pi^1}\inf_{\pi^2\in \Pi^2} \limsup_{T\rightarrow \infty}\frac{1}{ T}\ln E^{\pi^1,\pi^2}_i\biggl[e^{\int_{0}^{T} c(\xi_t,\pi^1(t),\pi^2(t))dt}\biggr]$.
		\item [(ii)] For any mini-max selector $(\pi^{*1},\pi^{*2})\in \Pi_1^s \times \Pi_2^s $ of (\ref{eq 2.30}), we have
	   \begin{align}
		\psi^{*}(i)&=\sup_{\pi^1\in \Pi^1}E^{\pi^1,\pi^{*2}}_i\bigg[e^{\int_{0}^{\tilde{\tau}(\mathscr{B})}(c(\xi_t,\pi^1(t),\pi^{*2}(\xi_t))-\rho)dt}\psi^{*}(\xi_{\tilde{\tau}(\mathscr{B})})\bigg]\nonumber\\
		&=\inf_{\pi^2\in\Pi^2}E^{\pi^{*1},\pi^2}_i\bigg[e^{\int_{0}^{\tilde{\tau}(\mathscr{B})}(c(\xi_t,\pi^{*1}(\xi_t),\pi^2(t))-\rho)dt}\psi^{*}(\xi_{\tilde{\tau}(\mathscr{B})})\bigg]~\forall i\in \mathscr{B}^c\,,\label{eq 2.31}
		\end{align}
	for some finite set $\mathscr{B}\supset \hat{\mathscr{K}}$.
	\end{enumerate}
\end{lemma}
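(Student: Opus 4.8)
The plan is to obtain $(\rho,\psi^{*})$ by a limiting argument from the Dirichlet eigenpairs $(\rho_{n},\psi_{n})$ produced by Lemma \ref{lemm 2.3}; I would treat only Assumption \ref{assm 2.2}(b), the case (a) being identical with the constant $\hat{\gamma}$ in place of $\hat{\ell}$ (cf.\ Lemma \ref{lemm 2.4}). After normalising $\psi_{n}(i_0)=1$ (legitimate since $\psi_{n}(i_0)>0$ by Assumption \ref{assm 2.3}(iii)) and passing, via the boundedness of $\{\rho_{n}\}$ and $\liminf_{n}\rho_{n}\ge 0$, to a subsequence with $\rho_{n}\to\rho\ge 0$, the crucial first step is a uniform bound $\psi_{n}\le\kappa V$. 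To obtain it I would first put $i=i_0$ in (\ref{eq 2.24}) with an outer minimising selector of its inf-sup form; using $c\ge 0$ and $\psi_{n}(i_0)=1$ one gets $\psi_{n}(j)\,q(j\,|\,i_0,\mu,\nu_n)\le\sup_{n}|\rho_{n}|+q^{*}(i_0)$ for all $j\ne i_0$, all $\mu$, and the selected $\nu_n$, hence $\psi_{n}(j)\le C_{j}:=(\sup_{n}|\rho_{n}|+q^{*}(i_0))/\inf_{(a,b)}q(j\,|\,i_0,a,b)$, which is finite and $n$-independent by Assumption \ref{assm 2.3}(iii), continuity and compactness; thus $\kappa_{\mathscr{D}}:=\sup_{n}\max_{\mathscr{D}}\psi_{n}<\infty$ for every finite $\mathscr{D}$. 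Then I would fix a finite $\mathscr{B}\supset\hat{\mathscr{K}}$ with $\hat{\ell}(i)-\max_{(a,b)\in A(i)\times B(i)}c(i,a,b)\ge\sup_{n}|\rho_{n}|$ on $\mathscr{B}^{c}$ (possible, this function being norm-like), so that $c-\rho_{n}\le\hat{\ell}$ on $\mathscr{B}^{c}$; taking in (\ref{eq 2.24}) an outer maximising selector, $\psi_{n}$ is a subsolution of the $(c-\rho_{n})$-weighted HJI operator under $(\pi^{1}_{n},\pi^{2})$ for every $\pi^{2}$, and since $\psi_{n}\equiv 0$ off $\hat{\mathscr{D}}_{n}$, Dynkin's formula up to $\tilde{\tau}(\mathscr{B})\wedge\tau_{n}$ together with $c-\rho_{n}\le\hat{\ell}$ on $\mathscr{B}^{c}$, $V\ge 1$, and Lemma \ref{lemm 2.4}(ii) yield $\psi_{n}(i)\le\kappa_{\mathscr{B}}\,E^{\pi^{1}_{n},\pi^{2}}_{i}[e^{\int_{0}^{\tilde{\tau}(\mathscr{B})}\hat{\ell}(\xi_{s})\,ds}V(\xi_{\tilde{\tau}(\mathscr{B})})]\le\kappa_{\mathscr{B}}V(i)$ for $i\in\mathscr{B}^{c}$, uniformly in $n$ and $\pi^{2}$, whence $\psi_{n}\le\kappa V$ on $S$ with $\kappa:=\kappa_{\mathscr{B}}$.

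Next, a diagonal argument (using $\psi_{n}(j)\le C_{j}$) gives a further subsequence along which $\psi_{n}\to\psi^{*}$ pointwise, with $0\le\psi^{*}\le\kappa V$ (so $\psi^{*}\in L^{\infty}_{V}$), $\psi^{*}(i_0)=1$, and $\rho\in\mathbb{R}_{+}$. To pass to the limit in (\ref{eq 2.24}) I would fix $i$, write $\sum_{j}\psi_{n}(j)q(j\,|\,i,\mu,\nu)=\sum_{j\ne i}\psi_{n}(j)q(j\,|\,i,\mu,\nu)-\psi_{n}(i)q(i,\mu,\nu)$, note the second term converges uniformly in $(\mu,\nu)$ by continuity and compactness, and for the first dominate the summands by $\kappa V(j)q(j\,|\,i,\mu,\nu)$, whose sum over $j$ is finite and continuous in $(\mu,\nu)$ by Assumption \ref{assm 2.3}(ii); a Dini-type argument makes the tail of this majorant uniformly small in $(\mu,\nu)$, so the convergence of $\sum_{j}\psi_{n}(j)q(j\,|\,i,\cdot,\cdot)$ to $\sum_{j}\psi^{*}(j)q(j\,|\,i,\cdot,\cdot)$ is uniform on $P(A(i))\times P(B(i))$, which lets me pass through $\sup_{\mu}\inf_{\nu}$ and obtain the first equality in (\ref{eq 2.30}); the second follows from Fan's minimax theorem exactly as in the proof of Lemma \ref{lemm 2.3}. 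Finally, for $\psi^{*}>0$: by continuity and compactness (\ref{eq 2.30}) has a minimax selector $(\pi^{*1},\pi^{*2})\in\Pi^{s}_{1}\times\Pi^{s}_{2}$, for which it holds with equality, i.e.\ $\sum_{j}\psi^{*}(j)q(j\,|\,i,\pi^{*1}(i),\pi^{*2}(i))=(\rho-c(i,\pi^{*1}(i),\pi^{*2}(i)))\psi^{*}(i)$; were $\psi^{*}(i^{\dagger})=0$, then, $\psi^{*}$ being nonnegative, $\psi^{*}$ would vanish at every $j$ with $q(j\,|\,i^{\dagger},\pi^{*1}(i^{\dagger}),\pi^{*2}(i^{\dagger}))>0$, and iterating with the irreducibility of $\{\xi_{t}\}$ under $(\pi^{*1},\pi^{*2})$ (Assumption \ref{assm 2.2}) would force $\psi^{*}\equiv 0$, contradicting $\psi^{*}(i_0)=1$.

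It then remains to prove (i) and (ii). For (i): since $\psi_{n}(i)\to\psi^{*}(i)>0$, for each fixed $i$ and all large $n$ one has $i\in\hat{\mathscr{D}}_{n}$ and $\psi_{n}(i)>0$, so (\ref{eq 2.16}) gives $\rho_{n}\le\sup_{\pi^{1}}\inf_{\pi^{2}}\limsup_{T}\frac{1}{T}\log E^{\pi^{1},\pi^{2}}_{i}[e^{\int_{0}^{T}c(\xi_{t},\pi^{1}(t),\pi^{2}(t))\,dt}]$; letting $n\to\infty$ and then taking $\inf_{i\in S}$ yields (i). For (ii): with a minimax selector $(\pi^{*1},\pi^{*2})$ of (\ref{eq 2.30}) and $\mathscr{B}$ as above (so $c-\rho\le\hat{\ell}$ on $\mathscr{B}^{c}$, Lemma \ref{lemm 2.4}(ii) applies, and $\tilde{\tau}(\mathscr{B})<\infty$ a.s.), $\psi^{*}$ is a supersolution of the $(c-\rho)$-weighted HJI operator under $(\pi^{1},\pi^{*2})$ for every $\pi^{1}$, a subsolution under $(\pi^{*1},\pi^{2})$ for every $\pi^{2}$, and an exact solution under $(\pi^{*1},\pi^{*2})$; applying Dynkin's formula up to $\tilde{\tau}(\mathscr{B})\wedge T\wedge\tau_{n}$, dominating $e^{\int_{0}^{t\wedge\tilde{\tau}(\mathscr{B})}(c-\rho)\,ds}\psi^{*}(\xi_{t\wedge\tilde{\tau}(\mathscr{B})})$ by $\kappa\,e^{\int_{0}^{t\wedge\tilde{\tau}(\mathscr{B})}\hat{\ell}(\xi_{s})\,ds}V(\xi_{t\wedge\tilde{\tau}(\mathscr{B})})$, and using Lemma \ref{lemm 2.4}(ii) to let $n\to\infty$ and $T\to\infty$, I would obtain $\sup_{\pi^{1}}E^{\pi^{1},\pi^{*2}}_{i}[e^{\int_{0}^{\tilde{\tau}(\mathscr{B})}(c-\rho)\,dt}\psi^{*}(\xi_{\tilde{\tau}(\mathscr{B})})]\le\psi^{*}(i)\le\inf_{\pi^{2}}E^{\pi^{*1},\pi^{2}}_{i}[e^{\int_{0}^{\tilde{\tau}(\mathscr{B})}(c-\rho)\,dt}\psi^{*}(\xi_{\tilde{\tau}(\mathscr{B})})]$ for $i\in\mathscr{B}^{c}$, while the exact-solution property under $(\pi^{*1},\pi^{*2})$ and the obvious sandwiching $\inf_{\pi^{2}}E^{\pi^{*1},\pi^{2}}_{i}\le E^{\pi^{*1},\pi^{*2}}_{i}\le\sup_{\pi^{1}}E^{\pi^{1},\pi^{*2}}_{i}$ of the same functional close the chain of inequalities; this is (\ref{eq 2.31}).

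The step I expect to be the main obstacle is the family of limit interchanges, all resting on the uniform bound $\psi_{n}\le\kappa V$ and the exponential Lyapunov estimates of Lemma \ref{lemm 2.4}: establishing that bound through the Dynkin/comparison argument with the correctly chosen finite set $\mathscr{B}$; interchanging $\lim_{n}$ with the infinite sum $\sum_{j}$, uniformly in $(\mu,\nu)$, when deriving (\ref{eq 2.30}); and interchanging $\lim_{n}$ and $\lim_{T}$ with the expectations when deriving (\ref{eq 2.31}), for which the Lyapunov function supplies the domination. By comparison, the strict positivity of $\psi^{*}$ and the bookkeeping around $\mathscr{B}$ are routine.
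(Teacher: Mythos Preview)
Your proposal is correct and follows the same overall architecture as the paper: pass to the limit in the Dirichlet eigenpairs $(\rho_n,\psi_n)$ of Lemma~\ref{lemm 2.3}, obtain a uniform bound of $\psi_n$ by the Lyapunov function $V$, extract a pointwise limit, pass to the limit in the eigenequation, use irreducibility for strict positivity, and finish (i) and (ii) via Dynkin's formula with the domination supplied by Lemma~\ref{lemm 2.4}. The one genuine methodological difference lies in how the uniform bound $\psi_n\le \kappa V$ is obtained. The paper does \emph{not} keep the normalisation $\psi_n(i_0)=1$; instead it rescales each $\psi_n$ by the largest $\hat\theta_n$ for which $\hat\theta_n\psi_n\le V$, so that $\psi_n\le V$ holds automatically and $\psi_n$ touches $V$ at some point. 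A comparison argument (Dynkin plus Lemma~\ref{lemm 2.4}) then forces the touching point into the finite set $\mathscr{B}$, which simultaneously gives nontriviality of the limit $\psi^*$ (since $V\ge 1$). Your route fixes $\psi_n(i_0)=1$, first extracts the pointwise bound $\psi_n(j)\le C_j$ from the equation at $i_0$ and Assumption~\ref{assm 2.3}(iii), and only then propagates to $\psi_n\le \kappa_{\mathscr B}V$ on $\mathscr{B}^c$ by the Dynkin comparison; nontriviality of $\psi^*$ is immediate from $\psi^*(i_0)=1$. Both arguments are sound; the paper's ``touch $V$ from below'' device is a little slicker because it delivers $\psi^*\le V$ (not merely $\psi^*\le \kappa V$) and the nontriviality in one stroke, whereas your approach separates these into two easier steps and makes the role of Assumption~\ref{assm 2.3}(iii) more transparent. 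For the limit passage in the equation, the paper works with one-sided inequalities via selectors (dominated convergence on one side, extended Fatou on the other), rather than your uniform-in-$(\mu,\nu)$ Dini argument; either is fine here.
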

\begin{proof}
	 Using Assumption \ref{assm 2.2} and the fact $c\geq 0$, there exists a finite set $\mathscr{B}$ containing $\hat{\mathscr{K}}$ such that the following hold.
	\begin{itemize}
		\item  When Assumption  \ref{assm 2.2} (a) holds:
		\begin{align}
		(\sup_{(a,b)\in A(i)\times B(i)}c(i,a,b)-\rho_{n})<\hat{\gamma}~\forall i\in \mathscr{B}^c,~\text{for all}~n~\text{large}.\label{eq 2.32}
		\end{align}
		\item  When Assumption  \ref{assm 2.2} (b) holds:
		\begin{align}
		(\sup_{(a,b)\in A(i)\times B(i)}c(i,a,b)-\rho_n)<\hat{\ell} (i)~\forall i\in \mathscr{B}^c,~\text{for all}~n~\text{large}.\label{eq 2.33}
		\end{align}
	\end{itemize}
Now we scale $\psi_n$ in such a way that it touches $V$ from below. Define
$$\hat{\theta}_n=\sup\{k>0:(V-k\psi_n)>0~\text{in}~S\}.$$
Then we see that $\hat{\theta}_n$ is finite as $\psi_{n}$ vanishes in $\hat{\mathscr{D}}_n^c$ and $\psi_{n}\gneq 0$. We claim that if we replace $\psi_{n}$ by $\hat{\theta}_n\psi_{n}$, then $\psi_{n}$ touches $V$ inside $\mathscr{B}$. If not, then  for some state $\hat{i}\in \mathscr{B}^c$, $(V-\psi_{n})(\hat{i})=0$ and $V-\psi_{n}>0$ in $\mathscr{B}\cup \hat{\mathscr{D}}_n^c$. Let $\pi^{*1}_n \in \Pi_{1}^s$ be an outer maximizing selector of (\ref{eq 2.24}). Then by Dynkin formula, we get (under Assumption \ref{assm 2.2} (b))
	\begin{align*}
\psi_n(\hat{i})&\leq E^{\pi^{*1}_n,\pi^2}_{\hat{i}}\biggl[e^{\int_{0}^{T\wedge\tilde{\tau}(\mathscr{B})}(c(\xi_s,\pi^{*1}_n(\xi_s),\pi^2(s))-\rho_n)ds}\psi_n(\xi_{T\wedge \tilde{\tau}(\mathscr{B})})I_{\{T\wedge\tilde{\tau}(\mathscr{B})<\tau_n\}}\biggr]\\
&\leq E^{\pi^{*1}_n,\pi^2}_{\hat{i}}\biggl[e^{\int_{0}^{T\wedge\tilde{\tau}(\mathscr{B})}\hat{\ell}(\xi_s)ds}\psi_n(\xi_{T\wedge \tilde{\tau}(\mathscr{B})})I_{\{T\wedge\tilde{\tau}(\mathscr{B})<\tau_n\}}\biggr].
\end{align*}
 Since $\psi_n\leq V$, in view of Lemma \ref{lemm 2.4}, by the dominated convergence theorem, taking $T\rightarrow\infty$, we get
	\begin{align*}
\psi_n(\hat{i})
\leq E^{\pi^{*1}_n,\pi^2}_{\hat{i}}\biggl[e^{\int_{0}^{\tilde{\tau}(\mathscr{B})}\hat{\ell}(\xi_s)ds}\psi_n(\xi_{ \tilde{\tau}(\mathscr{B})})\biggr].
\end{align*}
 Using this and (\ref{eq 2.22}), we have
	\begin{align*}
0=(V-\psi_n)(\hat{i})\geq E^{\pi^{*1}_n,\pi^2}_{\hat{i}}\bigg[e^{\int_{0}^{\tilde{\tau}(\mathscr{B})}\hat{\ell}(\xi_s) ds}(V-\psi_n)(\xi_{\tilde{\tau}(\mathscr{B})})\bigg]>0.
\end{align*}
Hence we arrive at a contradiction. Thus $\psi_n$ touches $V$ inside $\mathscr{B}$. Similar conclusion holds under Assumption \ref{assm 2.2} (a). Now, since $\psi_{n}\leq V$ for all large n, by diagonalization argument, there exists a subsequence (by an abuse of notation, we use the same sequence) such that, $\psi_{n}\rightarrow\psi^{*}$ for all $i\in S$, as $n\rightarrow \infty$, and $\psi^{*}\leq V$. Also, since by Lemma \ref{lemm 2.3}, the sequence $\{\rho_{n}\}$ is bounded and $\displaystyle \liminf_{n\rightarrow \infty}\rho_{n}\geq 0$, we can find a subsequence (by an abuse of notation we use the same sequence) and some $\rho\geq 0$ such that $\rho_{n}\rightarrow \rho$ as $n\rightarrow \infty$.
Thus as before, there exists a mini-max selector $(\pi^{*1}_n, \pi^{*2}_n)\in \Pi_1^s\times \Pi_2^s$ of (\ref{eq 2.24}), i.e.,
\begin{align}
&\inf_{\nu\in P(B(i))}\bigg[\sum_{j\in S}\psi_{n}(j)q(j|i,\pi^{*1}_n(i),\nu)+c(i,\pi^{*1}_n(i),\nu)\psi_{n}(i)\bigg]\nonumber\\
& = \sup_{\mu\in P(A(i))}\inf_{\nu\in P(B(i))}\bigg[\sum_{j\in S}\psi_{n}(j)q(j|i,\mu,\nu)+c(i,\mu,\nu)\psi_{n}(i)\bigg]\nonumber\\
& = \inf_{\nu\in P(B(i))}\sup_{\mu\in P(A(i))}\bigg[\sum_{j\in S}\psi_{n}(j)q(j|i,\mu,\nu)+c(i,\mu,\nu)\psi_{n}(i)\bigg]\nonumber\\
& =\sup_{\mu\in P(A(i))}\bigg[\sum_{j\in S}\psi_{n}(j)q(j|i,\mu,\pi^{*2}_n(i))+c(i,\mu,\pi^{*2}_n(i))\psi_{n}(i)\bigg].\label{L2.3E2.31}
\end{align}
Hence,
\begin{align*}
\rho_{n}\psi_{n}(i)\leq \bigg[\sum_{j\in S}\psi_{n}(j)q(j|i,\pi^{*1}_n(i),\nu)+c(i,\pi^{*1}_n(i),\nu)\psi_{n}(i)\bigg].
\end{align*}
The above implies
\begin{align}
\rho_{n}\psi_{n}(i)-\psi_{n}(i)q(i|i,\pi^{*1}_n(i),\nu)\leq \bigg[\sum_{j\neq  i}\psi_{n}(j)q(j|i,\pi^{*1}_n(i),\nu)+c(i,\pi^{*1}_n(i),\nu)\psi_{n}(i)\bigg].\label{L2.6E2.34}
\end{align}
Now, since $\psi_{n}(i)\leq V(i)$ for all $i\in S$, we have
\begin{align}
\sum_{j\neq i}\psi(j)q(j|i,\pi^{*1}_n(i),\nu)\leq \sum_{j\neq i} V(j) q(j|i,\pi^{*1}_n(i),\nu).\label{L2.6E2.35}
\end{align}
Also, since $\Pi_1^s$ and $\Pi_2^{s}$ are compact there exist $\pi^{*1}\in \Pi_1^s$ and $\pi^{*2}\in \Pi_2^s$ such that $\pi^{*1}_n\rightarrow\pi^{*1}$ and $\pi^{*2}_n\rightarrow\pi^{*2}$ as $n\rightarrow\infty$. Under given assumptions, from [\cite{GH1}, Lemma 7.2] it is clear that the functions $c(i,\mu,\nu)$, and $\displaystyle \sum_{j\in S}q(j|i,\mu,\nu)u(j)$ are continuous at $(\mu,\nu)$ on $P(A(i))\times P(B(i))$ for each fixed $u \in L^\infty_V$, $i\in S$.
Therefore by the dominated convergence theorem, letting $n\rightarrow\infty$ in (\ref{L2.6E2.34}), we obtain
\begin{align*}
\rho\psi^{*}(i)\leq \sum_{j\in S}\psi^{*}(j)q(j|i,\pi^{*1}(i),\nu)+c(i,\pi^{*1}(i),\nu)\psi^{*}(i).
\end{align*}
Hence we have
\begin{align}
\rho\psi^{*}(i)&\leq \inf_{\nu\in P(B(i))} \bigg[\sum_{j\in S}\psi^{*}(j)q(j|i,\pi^{*1}(i),\nu)+c(i,\pi^{*1}(i),\nu)\psi^{*}(i)\biggr].\nonumber\\
&\leq \sup_{\mu\in P(A(i))}\inf_{\nu\in P(B(i))} \bigg[\sum_{j\in S}\psi^{*}(j)q(j|i,\mu,\nu)+c(i,\mu,\nu)\psi^{*}(i)\biggr].\label{eq 2.34}
\end{align}
By similar arguments using (\ref{L2.3E2.31}) and extended Fatou's lemma [\cite{HL}, Lemma 8.3.7], we get
\begin{align}
\rho\psi^{*}(i)&\geq  \sup_{\mu\in P(A(i))}\bigg[\sum_{j\in S}\psi^{*}(j)q(j|i,\mu,\pi^{*2}(i))+c(i,\mu,\pi^{*2}(i))\psi^{*}(i)\biggr]\nonumber\\
&\geq \inf_{\nu\in P(B(i))} \sup_{\mu\in P(A(i))} \bigg[\sum_{j\in S}\psi^{*}(j)q(j|i,\mu,\nu)+c(i,\mu,\nu)\psi^{*}(i)\biggr].\label{eq 2.35}
\end{align}
Hence by (\ref{eq 2.34}) and (\ref{eq 2.35}), we get (\ref{eq 2.30}).
Since at some point in $\mathscr{B}$ we have $(V-\psi_{n})=0$, for all large $n$. It follows that $(V-\psi^{*})(i^*)=0$ for some $i^*\in\mathscr{B}$. Since $V\geq 1$, it is clear that $\psi^{*}$ is nontrivial. Now we claim that $\psi^{*}>0$. If not, then we must have $\psi^{*}(\tilde{i})=0$ for some $\tilde{i}\in S$.
Again as before, there exits a pair of a mini-max selector $(\pi^{*1},\pi^{*2})\in \Pi_1^s\times \Pi_2^s$ such that form (\ref{eq 2.30}), we have
\begin{align}
\rho\psi^{*}(\tilde{i})=\bigg[\sum_{j\in S}\psi^{*}(j)q(j|\tilde{i},\pi^{*1}(\tilde{i}),\pi^{*2}(\tilde{i}))+c(\tilde{i},\pi^{*1}(\tilde{i}),\pi^{*2}(\tilde{i}))\psi^{*}(\tilde{i})\biggr].\label{eq 2.37}
\end{align}
This implies
$$\sum_{j\neq \tilde{i}}\psi^{*}(j)q(j|\tilde{i},\pi^{*1}(\tilde{i}),\pi^{*2}(\tilde{i}))=0.$$
Since the Markov chain $\xi$ is irreducible under $(\pi^{*1},\pi^{*2})\in \Pi^s_1\times\Pi^s_2$, from the above equation, it follows that $\psi^{*}\equiv 0$. So, we arrive at a contradiction. This proves the claim.
Now we prove (i) and (ii).

	(i)  Since $\psi^{*}>0$ and $\psi_n(i)\rightarrow \psi^{*}(i)$ as $n\rightarrow \infty$, we have $\psi_{n}>0$ for all large enough $n$. So, using (\ref{eq 2.16}), we have $\displaystyle    \lim_{n\rightarrow \infty}\rho_{n}\leq \sup_{\pi^1\in \Pi^1}\inf_{\pi^2\in \Pi^2}J(i,c,\pi^1,\pi^2)$ for all $i\in S$.
	
(ii) By measurable selection theorem in [\cite{N}, Theorem 2.2], there exists a pair of strategies (a mini-max selector) $(\pi^{*1},\pi^{*2})\in \Pi_1^s\times \Pi_2^s$ (as in (\ref{L2.3E2.31})) satisfying
\begin{align}
\rho\psi^{*}(i)&= \sup_{\mu\in P(A(i))}\bigg[\sum_{j\in S}\psi^{*}(j)q(j|i,\mu,\pi^{*2}(i))+c(i,\mu,\pi^{*2}(i))\psi^{*}(i)\biggr]\nonumber\\
&=\inf_{\nu\in P(B(i))} \bigg[\sum_{j\in S}\psi^{*}(j)q(j|i,\pi^{*1}(i),\nu)+c(i,\pi^{*1}(i),\nu)\psi^{*}(i)\biggr].\label{eq 2.38}
\end{align}	
 Using (\ref{eq 2.38}), Lemma \ref{lemm 2.4}, and Dynkin's formula, we have
\begin{align*}
\psi^{*}(i)\geq E^{\pi^1,\pi^{*2}}_i\bigg[e^{\int_{0}^{\tilde{\tau}(\mathscr{B})\wedge T}(c(\xi_t,\pi^1(t),\pi^{*2}(\xi_t))-\rho)dt}\psi^{*}(\xi_{\tilde{\tau}(\mathscr{B})\wedge T})\bigg]~\forall i\in \mathscr{B}^c.
\end{align*}
By Fatou's lemma taking $T\rightarrow\infty$, we get
\begin{align}
\psi^{*}(i)\geq E^{\pi^1,\pi^{*2}}_i\bigg[e^{\int_{0}^{\tilde{\tau}(\mathscr{B})}(c(\xi_t,\pi^1(t),\pi^{*2}(\xi_t))-\rho)dt}\psi^{*}(\xi_{\tilde{\tau}(\mathscr{B})})\bigg],~\forall i\in \mathscr{B}^c.\label{eq 2.39}
\end{align}
Hence,
\begin{align}
\psi^{*}(i)&\geq\sup_{\pi^1\in \Pi^1} E^{\pi^1,\pi^{*2}}_i\bigg[e^{\int_{0}^{\tilde{\tau}(\mathscr{B})}(c(\xi_t,\pi^1(t),\pi^{*2}(\xi_t))-\rho)dt}\psi^{*}(\xi_{\tilde{\tau}(\mathscr{B})})\bigg]\nonumber\\
&\geq \inf_{\pi^2\in\Pi^2}\sup_{\pi^1\in \Pi^1}E^{\pi^1,\pi^{2}}_i\bigg[e^{\int_{0}^{\tilde{\tau}(\mathscr{B})}(c(\xi_t,\pi^1(t),\pi^{2}(t))-\rho)dt}\psi^{*}(\xi_{\tilde{\tau}(\mathscr{B})})\bigg],~\forall i\in \mathscr{B}^c.\label{eq 2.40}
\end{align}
Also, using (\ref{eq 2.38}), Lemma \ref{lemm 2.4}, and Dynkin's formula, we obtain
\begin{align*}
\psi^{*}(i)\leq E^{\pi^{*1},\pi^{2}}_i\bigg[e^{\int_{0}^{\tilde{\tau}(\mathscr{B})\wedge T}(c(\xi_t,\pi^{*1}(\xi_t),\pi^{2}(t))-\rho)dt}\psi^{*}(\xi_{\tilde{\tau}(\mathscr{B})\wedge T})\bigg]~\forall i\in \mathscr{B}^c.
\end{align*}
Since $\psi^{*}\leq V$, using the estimates as in Lemma \ref{lemm 2.4}, taking $T\rightarrow\infty$, by dominated convergent theorem it follows that
\begin{align}
\psi^{*}(i)&\leq  E^{\pi^{*1},\pi^{2}}_i\bigg[e^{\int_{0}^{\tilde{\tau}(\mathscr{B})}(c(\xi_t,\pi^{*1}(\xi_t),\pi^{2}(t))-\rho)dt}\psi^{*}(\xi_{\tilde{\tau}(\mathscr{B})})\bigg]~\forall i\in \mathscr{B}^c.\label{eq 2.41}
\end{align}
Hence
\begin{align}
\psi^{*}(i)&\leq \inf_{\pi^2\in \Pi^2} E^{\pi^{*1},\pi^{2}}_i\bigg[e^{\int_{0}^{\tilde{\tau}(\mathscr{B})}(c(\xi_t,\pi^{*1}(\xi_t),\pi^{2}(t))-\rho)dt}\psi^{*}(\xi_{\tilde{\tau}(\mathscr{B})})\bigg]\nonumber\\
&\leq  \sup_{\pi^1\in \Pi^1}\inf_{\pi^2\in\Pi^2}E^{\pi^1,\pi^{2}}_i\bigg[e^{\int_{0}^{\tilde{\tau}(\mathscr{B})}(c(\xi_t,\pi^1(t),\pi^{2}(t))-\rho)dt}\psi^{*}(\xi_{\tilde{\tau}(\mathscr{B})})\bigg],~\forall i\in \mathscr{B}^c.\label{eq 2.42}
\end{align}
From (\ref{eq 2.40}) and (\ref{eq 2.42}), we get (\ref{eq 2.31}).
	\end{proof}
\section{Existence of risk-sensitive average optimal strategies}
In this section we prove that any mini-max selector of the associated HJI equation is a saddle point equilibrium. Also, exploiting the stochastic representation (\ref{eq 2.31}) we completely characterize all possible saddle point equilibrium in the space of stationary Markov strategies.
\begin{thm}\label{theo 3.1}
Suppose Assumptions \ref{assm 2.1}, \ref{assm 2.2}, and \ref{assm 2.3} hold. Then for any mini-max selector $(\pi^{*1},\pi^{*2})\in \Pi^s_1\times \Pi^s_2$ of (\ref{eq 2.30}), i.e., for any pair $(\pi^{*1},\pi^{*2})\in \Pi^s_1\times \Pi^s_2$ satisfying
\begin{align}
&\inf_{\nu\in P(B(i))}\bigg[\sum_{j\in S}\psi^{*}(j)q(j|i,\pi^{*1}(i),\nu)+c(i,\pi^{*1}(i),\nu)\psi^{*}(i)\bigg]\nonumber\\
&= \sup_{\mu\in P(A(i))}\inf_{\nu\in P(B(i))}\bigg[\sum_{j\in S}\psi^{*}(j)q(j|i,\mu,\nu)+c(i,\mu,\nu)\psi^{*}(i)\bigg]\nonumber\\
&=\inf_{\nu\in P(B(i))}\sup_{\mu\in P(A(i))}\bigg[\sum_{j\in S}\psi^{*}(j)q(j|i,\mu,\nu)+c(i,\mu,\nu)\psi^{*}(i)\bigg]\nonumber\\
&=\sup_{\mu\in P(A(i))}\bigg[\sum_{j\in S}\psi^{*}(j)q(j|i,\mu,\pi^{*2}(i))+c(i,\mu,\pi^{*2}(i))\psi^{*}(i)\bigg], ~i\in S,\label{eq 2.46}
\end{align}
we have
\begin{align}
\rho&=\inf_{i\in S}\sup_{\pi^1\in \Pi^1}\limsup_{T\rightarrow \infty}\frac{1}{T}\log E^{\pi^1,\pi^{*2}}_i\bigg[e^{\int_{0}^{T}c(\xi_t,\pi^1(t),\pi^{*2}(\xi_t))dt}\bigg]\nonumber\\
&=\inf_{i\in S}\sup_{\pi^1\in \Pi^1}\inf_{\pi^2\in \Pi^2}\limsup_{T\rightarrow \infty}\frac{1}{T}\log E^{\pi^{1},\pi^{2}}_i\bigg[e^{\int_{0}^{T}c(\xi_t,\pi^{1}(t),\pi^{2}(t))dt}\bigg]\nonumber\\
&=\inf_{i\in S}\inf_{\pi^2\in \Pi^2}\sup_{\pi^1\in \Pi^1}\limsup_{T\rightarrow \infty}\frac{1}{T}\log E^{\pi^{1},\pi^{2}}_i\bigg[e^{\int_{0}^{T}c(\xi_t,\pi^{1}(t),\pi^{2}(t))dt}\bigg].
\label{eq 2.47}
\end{align}
\end{thm}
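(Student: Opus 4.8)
The plan is to read~\eqref{eq 2.47} as the assertion that $\rho$ is the value of the game and that the mini-max selector $(\pi^{*1},\pi^{*2})$ is a saddle point, and to reduce all three equalities to the single inequality
\[
\sup_{\pi^1\in\Pi^1}\limsup_{T\to\infty}\frac1T\log E^{\pi^1,\pi^{*2}}_i\Bigl[e^{\int_0^T c(\xi_t,\pi^1(t),\pi^{*2}(\xi_t))\,dt}\Bigr]\;\le\;\rho\qquad\forall\,i\in S.
\]
Indeed, Lemma~\ref{lemm 2.6}(i) already gives $\rho\le\inf_{i\in S}\sup_{\pi^1}\inf_{\pi^2}J(i,c,\pi^1,\pi^2)$; the minimax inequality gives $\inf_{i}\sup_{\pi^1}\inf_{\pi^2}J\le\inf_{i}\inf_{\pi^2}\sup_{\pi^1}J$; and since $\pi^{*2}$ is one admissible choice of $\pi^2$, $\inf_{i}\inf_{\pi^2}\sup_{\pi^1}J\le\inf_{i}\sup_{\pi^1}J(i,c,\cdot,\pi^{*2})$, which by the displayed inequality is $\le\rho$. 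Hence all four quantities coincide with $\rho$, which is precisely~\eqref{eq 2.47}.

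To prove the displayed inequality, start from the defining property~\eqref{eq 2.46} of the outer minimizing selector $\pi^{*2}$: for every $i\in S$ and every $\mu\in P(A(i))$,
\[
\sum_{j\in S}\psi^{*}(j)\,q(j\,|\,i,\mu,\pi^{*2}(i))+\bigl(c(i,\mu,\pi^{*2}(i))-\rho\bigr)\psi^{*}(i)\;\le\;0.
\]
Applying the Dynkin formula as in~\eqref{eq 2.11} to $t\mapsto e^{\int_0^t(c(\xi_s,\pi^1(s),\pi^{*2}(\xi_s))-\rho)\,ds}\psi^{*}(\xi_t)$ under $P^{\pi^1,\pi^{*2}}_i$, localized along $\tau_n$ (the integrability coming from Assumption~\ref{assm 2.2}; cf.\ Lemma~\ref{lemm 2.4}) and then letting $n\to\infty$ by conditional Fatou, the displayed inequality shows that this process is a nonnegative $P^{\pi^1,\pi^{*2}}_i$-supermartingale for every, possibly history-dependent, $\pi^1\in\Pi^1$. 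Optional stopping then yields, for every a.s.\ finite stopping time $\tau$, the weighted estimate
\[
E^{\pi^1,\pi^{*2}}_i\Bigl[e^{\int_0^{\tau}(c-\rho)\,ds}\,\psi^{*}(\xi_{\tau})\Bigr]\;\le\;\psi^{*}(i).
\]

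The main obstacle is converting this weighted estimate into the unweighted logarithmic-moment bound, the difficulty being that $\psi^{*}$, although strictly positive, need not be bounded away from $0$ on the countable set $S$ (from~\eqref{eq 2.31} one expects $\psi^{*}(i)$ to decay roughly like $e^{-\rho\,\mathbb{E}_i[\tilde\tau(\mathscr B)]}$ as $i\to\infty$). I would handle this by an excursion decomposition. Fix a finite set $\mathscr B\supset\hat{\mathscr K}$ large enough that the Lyapunov rate strictly exceeds $\rho$ on $\mathscr B^c$ --- i.e.\ $\hat{\ell}>\rho$ there under Assumption~\ref{assm 2.2}(b), which is automatic under~(a) since $\rho\le\|c\|_\infty<\hat\gamma$ --- so that Lemma~\ref{lemm 2.4} gives $E^{\pi^1,\pi^2}_j[e^{\rho\,\tilde\tau(\mathscr B)}]\le V(j)$ for $j\in\mathscr B^c$, and put $\delta:=\min_{\mathscr B}\psi^{*}>0$. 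Let $\theta_0<\theta_1<\cdots$ be the successive returns of $\xi$ to $\mathscr B$ (each a stopping time). Since $c\ge0$, on the event that $\theta_k$ is the last return before $T$ one has $e^{\int_0^T c\,dt}\le e^{\rho\theta_k}\,e^{\int_0^{\theta_k}(c-\rho)\,dt}\,e^{\int_{\theta_k}^{\theta_{k+1}}c\,dt}$; decomposing $E^{\pi^1,\pi^{*2}}_i[e^{\int_0^T c\,dt}]$ over $k$, bounding each term by applying the weighted estimate at $\theta_k$ (which gives $E^{\pi^1,\pi^{*2}}_i[e^{\int_0^{\theta_k}(c-\rho)\,dt}]\le\psi^{*}(i)/\delta$) times a uniform exponential moment of the per-cycle cost $\int_{\theta_k}^{\theta_{k+1}}c\,dt$ (finite by Lemma~\ref{lemm 2.4}, and, in the unbounded-cost case, via the norm-like domination $c\le\hat{\ell}+k_1$ after enlarging $\mathscr B$ so the drift also absorbs $k_1$), and summing the $O(T)$ non-negligible pieces, one obtains $E^{\pi^1,\pi^{*2}}_i[e^{\int_0^T c\,dt}]\le e^{\rho T}p(T)$ with $p$ of polynomial growth and independent of $\pi^1$; taking $\frac1T\log$ and $\limsup$ gives the displayed inequality, uniformly in $\pi^1$. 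The one-cycle identity $\psi^{*}(i)=\sup_{\pi^1}E^{\pi^1,\pi^{*2}}_i[e^{\int_0^{\tilde\tau(\mathscr B)}(c-\rho)\,dt}\psi^{*}(\xi_{\tilde\tau(\mathscr B)})]$ from~\eqref{eq 2.31}, iterated over excursions, gives an alternative route to the same estimate.

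With the displayed inequality established, the sandwiching of the first paragraph delivers all three equalities in~\eqref{eq 2.47}, and in particular that $(\pi^{*1},\pi^{*2})$ is a saddle point realizing the value $\rho$. The hardest point of the whole argument is exactly the excursion estimate above: keeping the exponential moments of both the excursion lengths and the accumulated cost per cycle simultaneously under control, most delicately in the unbounded-cost regime of Assumption~\ref{assm 2.2}(b).
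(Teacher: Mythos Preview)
Your reduction in the first paragraph is exactly right: Lemma~\ref{lemm 2.6}(i) plus the minimax inequality collapse~\eqref{eq 2.47} to the single upper bound $\sup_{\pi^1}J(i,c,\pi^1,\pi^{*2})\le\rho$. You also correctly pinpoint the real obstruction, namely that the supermartingale estimate $E^{\pi^1,\pi^{*2}}_i[e^{\int_0^{\tau}(c-\rho)}\psi^*(\xi_{\tau})]\le\psi^*(i)$ involves the weight $\psi^*$, which is not bounded away from zero.

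The excursion argument you propose, however, has a genuine gap. After decomposing over the index $k$ of the last return to $\mathscr B$ before $T$, conditioning on $\mathscr F_{\theta_k}$ and using the per-cycle bound $E[e^{\int_{\theta_k}^{\theta_{k+1}}c}\mid\mathscr F_{\theta_k}]\le M$, each summand is bounded by
\[
E\bigl[I_{\{\theta_k\le T<\theta_{k+1}\}}e^{\int_0^T c}\bigr]\;\le\;M\,e^{\rho T}\,E\bigl[I_{\{\theta_k\le T\}}e^{\int_0^{\theta_k}(c-\rho)}\bigr]\;\le\;M\,e^{\rho T}\,\frac{\psi^*(i)}{\delta},
\]
which is the \emph{same} constant for every $k$; summing over infinitely many $k$ diverges. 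Your claim that only $O(T)$ terms are ``non-negligible'' is precisely what needs proof, and it does not follow from the ingredients you list: the event $\{\theta_k\le T\}$ is rare for large $k$, but it is positively correlated with large values of $e^{\int_0^{\theta_k}(c-\rho)}$, so one cannot simply multiply $P(\theta_k\le T)$ by the uniform bound. The iterated one-cycle identity you mention as an alternative route yields only $E[e^{\int_{\theta_k}^{\theta_{k+1}}(c-\rho)}\mid\mathscr F_{\theta_k}]\le(\max_{\mathscr B}\psi^*)/(\min_{\mathscr B}\psi^*)$, which may exceed $1$ and hence gives no multiplicative contraction either.

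The paper avoids this difficulty by a different mechanism: it perturbs the cost to $\hat c_n\ge c$ (setting $\hat c_n=\|c\|_\infty+\alpha_3$ off $\hat{\mathscr D}_n$ in case~(a), and $\hat c_n=c+\tfrac1n[\hat\ell-\sup c]_+$ in case~(b)) so that the associated one-player eigenvalue $\tilde\rho_n$ satisfies $\hat c_n-\tilde\rho_n\ge0$ outside a finite set $\tilde{\mathscr B}$. This forces the eigenfunction $\tilde\psi_n$ to be bounded below by $\min_{\tilde{\mathscr B}}\tilde\psi_n>0$ on all of $S$, and the unweighted bound $\tilde\rho_n\ge\sup_{\pi^1}J(i,\hat c_n,\pi^1,\pi^{*2})\ge\sup_{\pi^1}J(i,c,\pi^1,\pi^{*2})$ follows directly. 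One then lets $n\to\infty$ to obtain a limit pair $(\tilde\rho,\tilde\psi)$ with $\tilde\rho\ge\rho$, and identifies $\tilde\rho=\rho$ by a touching argument: scaling so that $\psi^*-\kappa\tilde\psi\ge0$ with equality at some $\hat i_0$, the eigen-equations combined with irreducibility force $\psi^*=\kappa\tilde\psi$, whence the eigenvalues coincide. The perturbation thus replaces your excursion bookkeeping by a structural lower bound on the eigenfunction.
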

\begin{proof}
We perturb the cost function as follows.
	\begin{itemize}
		\item 	If Assumption \ref{assm 2.2} (a) holds: We define for $ (a,b)\in A(i)\times B(i)$, $i\in S$,
		$\hat{c}_n(i,a,b)=c(i,a,b)I_{\hat{\mathscr{D}}_n}(i)+(\|c\|_\infty+\alpha_3)I_{\hat{\mathscr{D}}_n^c}$. Here $\alpha_3>0$, is a small number satisfying $\|c\|_\infty+\alpha_3<\hat{\gamma}$.
		Note that $\|\hat{c}_n\|_\infty<\hat{\gamma}$.
		\item If Assumption \ref{assm 2.2} (b) holds: We define for $(a,b)\in A(i)\times B(i)$, $i\in S$,
		$\hat{c}_n(i,a,b)=c(i,a,b)+\frac{1}{n}[\hat{\ell}(i)-\displaystyle \sup_{(a,b)\in A(i)\times B(i)}c(i,a,b)]_+$.
		Note that the function $[\hat{\ell}(\cdot)-\displaystyle \sup_{(a,b)\in A(\cdot)\times B(\cdot)}c(\cdot,a,b)]_+$  is norm-like function. Also, it is easy to see that for large enough $n$, $\hat{\ell}(\cdot)-\displaystyle \sup_{(a,b)\in A(\cdot)\times B(\cdot)}\hat{c}_n(\cdot,a,b)$ is norm-like.
	\end{itemize}
In view of Lemma \ref{lemm 2.6}, it is clear that for $\pi^{*2}\in \Pi^s_2$, there exists $(\tilde{\psi}_{n},\tilde{\rho}_{n})\in  L^\infty_V \times\mathbb{R}_{+}$, $\tilde{\psi}_{n}>0$ satisfying
	\begin{align}
	{\tilde{\rho}_{n}}\tilde{\psi}_{n}(i)=\sup_{\mu\in P(A(i))}\bigg[\sum_{j\in S}\tilde{\psi}_{n}(j)q(j|i,\mu,\pi^{*2}(i))+{\hat{c}_n(i,\mu,\pi^{*2}(i))}\tilde{\psi}_{n}(i)\bigg]\label{eq 2.48}
	\end{align}
	such that
	\begin{align}
	0\leq\tilde{\rho}_{n}\leq \sup_{\pi^1\in\Pi^1}\limsup_{T\rightarrow \infty}\frac{1}{T}\log E^{\pi^{1},\pi^{*2}}_i\bigg[e^{\int_{0}^{T}\hat{c}_n(\xi_t,\pi^{1}(t),\pi^{*2}(\xi_t))dt}\bigg].\label{eq 2.49}
	\end{align}
	Also, for some finite set $\mathscr{B}_1 \supset \mathscr{B} \supset \mathscr{K}$\,, we have
	\begin{align}
\tilde{\psi}_{n}(i)=\sup_{\pi^1\in\Pi^1}E^{\pi^{1},\pi^{*2}}_i\bigg[e^{\int_{0}^{\tilde{\tau}({\mathscr{B}_1})}(\hat{c}_n(\xi_t,\pi^{1}(t),\pi^{*2}(\xi_t))-\tilde{\rho}_{n})dt}\tilde{\psi}_{n}(\xi_{\tilde{\tau}(\mathscr{B}_1)})\bigg],~i\in \mathscr{B}_1^c.\label{eq 2.45}
	\end{align}
Now from the proof of Lemma \ref{lemm 2.6}, we have a finite set $\tilde{\mathscr{B}}$, depending on $n$, containing $\hat{\mathscr{K}}$ such that the following cases happen:
\begin{itemize}
	\item Under Assumption \ref{assm 2.2} (a): From (\ref{eq 2.49}), we have $\tilde{\rho}_{n}\leq \|\hat{c}_n\|_\infty$. Thus, for $i\in {\hat{\mathscr{D}}_n}^c$, it follows that $\hat{c}_n(i,a,b)-\tilde{\rho}_{n}\geq 0$ for all $(a,b)\in A(i)\times B(i)$. Consequently, we may take $\tilde{\mathscr{B}}=\hat{\mathscr{D}}_n$ such that $\hat{c}_n(i,a,b)-\tilde{\rho}_{n}\geq 0$ in $\tilde{\mathscr{B}}^c$ for all $(a,b)\in A(i)\times B(i)$.
	\item  Under Assumption \ref{assm 2.2} (b): since $\hat{c}_n$ is norm-like function, we can choose suitable finite set $\tilde{\mathscr{B}}$ such that $(\hat{c}_n(i,a,b)-\tilde{\rho}_{n})\geq 0$ in $\tilde{\mathscr{B}}^c$ for all $(a,b)\in A(i)\times B(i)$.
\end{itemize}
For any $\pi^1\in\Pi^1$, applying Dynkin formula and using (\ref{eq 2.48}) and Lemma \ref{lemm 2.4}, we get
\begin{align*}
\tilde{\psi}_{n}(i)\geq E^{\pi^{1},\pi^{*2}}_i\bigg[e^{\int_{0}^{\tilde{\tau}(\tilde{\mathscr{B}})\wedge T}(\hat{c}_n(\xi_t,\pi^{1}(t),\pi^{*2}(\xi_t))-\tilde{\rho}_{n})dt}\tilde{\psi}_{n}(\xi_{\tilde{\tau}(\tilde{\mathscr{B}})\wedge T})\bigg].
\end{align*}
Since for $i\in \tilde{\mathscr{B}}^c$, $\hat{c}_n(i,a,b)-\tilde{\rho}_{n}\geq 0$, by Fatou's lemma taking $T\rightarrow \infty$, we get
		\begin{align*}
\tilde{\psi}_{n}(i)&\geq E^{\pi^{1},\pi^{*2}}_i\bigg[e^{\int_{0}^{\tilde{\tau}(\tilde{\mathscr{B}})}(\hat{c}_n(\xi_t,\pi^{1}(t),\pi^{*2}(\xi_t))-\tilde{\rho}_{n})dt}\tilde{\psi}_{n}
(\xi_{\tilde{\tau}(\tilde{\mathscr{B}})})\bigg]\\
	&\geq (\min_{\tilde{\mathscr{B}}}\tilde{\psi}_{n})~ \forall ~i\in \tilde{\mathscr{B}}^c.
	\end{align*}
This implies that, $\tilde{\psi}_{n}$ has a lower bound. Now, applying Dynkin formula, and using (\ref{eq 2.48}) and Lemma \ref{lemm 2.4}, we deduce that
		\begin{align*}
\tilde{\psi}_{n}(i)&\geq E^{\pi^{1},\pi^{*2}}_i\bigg[e^{\int_{0}^{T\wedge \tau_N}(\hat{c}_n(\xi_t,\pi^{1}(t),\pi^{*2}(\xi_t))-\tilde{\rho}_{n})dt}\tilde{\psi}_{n}(\xi_{T\wedge\tau_N})\bigg],
	\end{align*}
	for any $i\in S$, where $\tau_N:=\inf\{t\geq 0: \xi_t\notin \{1,2,\cdots, N\}\}$, $N\in \mathbb{N}$. By Fatou's lemma taking $N\rightarrow \infty$, we get
	\begin{align*}
\tilde{\psi}_{n}(i)&\geq E^{\pi^{1},\pi^{*2}}_i\bigg[e^{\int_{0}^{T}(\hat{c}_n(\xi_t,\pi^{1}(t),\pi^{*2}(\xi_t))-\tilde{\rho}_{n})dt}\tilde{\psi}_{n}(\xi_{T})\bigg]\\
&\geq \min_{\tilde{\mathscr{B}}}\tilde{\psi}_{n} E^{\pi^{1},\pi^{*2}}_i \bigg[e^{\int_{0}^{T}(\hat{c}_n(\xi_t,\pi^{1}(t),\pi^{*2}(\xi_t))-\tilde{\rho}_{n})dt}\bigg].
\end{align*}
Thus, taking logarithm both sides, dividing by $T$ and letting $T\to\infty$, we obtain
		\begin{align*}
\tilde{\rho}_{n} \geq \limsup_{T\rightarrow \infty}\frac{1}{T}\log E^{\pi^{1},\pi^{*2}}_i\bigg[e^{\int_{0}^{T}\hat{c}_n(\xi_t,\pi^{1}(t),\pi^{*2}(\xi_t))dt}\bigg].
	\end{align*} Since $\pi^1\in\Pi^1$ arbitrary, it follows that
	\begin{align*}
\tilde{\rho}_{n}&\geq \sup_{\pi^1\in\Pi^1}\limsup_{T\rightarrow \infty}\frac{1}{T}\log E^{\pi^{1},\pi^{*2}}_i\bigg[e^{\int_{0}^{T}\hat{c}_n(\xi_t,\pi^{1}(t),\pi^{*2}(\xi_t))dt}\bigg]\\
&\geq \sup_{\pi^1\in\Pi^1}\limsup_{T\rightarrow \infty}\frac{1}{T}\log E^{\pi^{1},\pi^{*2}}_i\bigg[e^{\int_{0}^{T}{c}(\xi_t,\pi^{1}(t),\pi^{*2}(\xi_t))dt}\bigg].
	\end{align*}
	Using this and (\ref{eq 2.49}), we get $\displaystyle \sup_{\pi^1\in\Pi^1}J(i,c,\pi^{1},\pi^{*2})\leq\sup_{\pi^1\in\Pi^1} J(i,\hat{c}_n,\pi^{1},\pi^{*2})=\tilde{\rho}_{n}$ for all $n$. From the definition of $\hat{c}_n$, it is easy to see that $\tilde{\rho}_{n}$ is a decreasing sequence which has a lower bound. Now by similar arguments as in Lemma \ref{lemm 2.6}, it follows that there exists a pair $(\tilde{\rho},\tilde{\psi})$ such that $\tilde{\rho}_{n}\rightarrow \tilde{\rho}$ and $\tilde{\psi}_{n}\rightarrow\tilde{\psi}$ as $n\rightarrow\infty$. As in Lemma \ref{lemm 2.6}, by taking $n\rightarrow\infty$ in (\ref{eq 2.48}), we get
	\begin{align}
{\tilde{\rho}}\tilde{\psi}(i)=\sup_{\mu\in P(A(i))}\bigg[\sum_{j\in S}\tilde{\psi}(j)q(j|i,\mu,\pi^{*2}(i))+{{c}(i,\mu,\pi^{*2}(i))}\tilde{\psi}(i)\bigg].\label{eq 2.50}
\end{align}
Also, we have $\displaystyle \tilde{\rho}\geq \sup_{\pi^1\in \Pi^1}J(i,c,\pi^{1},\pi^{*2})\geq \rho$. Now, we want to show that $\tilde{\rho}=\rho$. Let $\tilde{\pi}^{*1}$ be a selector in (\ref{eq 2.50}). Thus
	\begin{align}
{\tilde{\rho}}\tilde{\psi}(i)=\bigg[\sum_{j\in S}\tilde{\psi}(j)q(j|i,\tilde{\pi}^{*1}(i),\pi^{*2}(i))+{{c}(i,\tilde{\pi}^{*1}(i),\pi^{*2}(i))}\tilde{\psi}(i)\bigg].\label{T3.1E3.9}
\end{align}
In view of estimates  in Lemma \ref{lemm 2.4}, applying Dynkin's formula and the dominated convergence theorem, from (\ref{T3.1E3.9}) we deduce that there exists a finite set $\mathscr{B}_2 \supset \mathscr{B}_1$ such that
	\begin{align}
\tilde{\psi}(i) \leq E^{\tilde{\pi}^{*1},\pi^{*2}}_i\bigg[e^{\int_{0}^{\tilde{\tau}({\mathscr{B}_2})}({c}(\xi_t,\tilde{\pi}^{*1}(\xi_t),\pi^{*2}(\xi_t))-\tilde{\rho})dt}\tilde{\psi}(\xi_{\tilde{\tau}(\mathscr{B}_2)})\bigg],~\forall i\in \mathscr{B}_2^c.\label{eq 2.51}
\end{align}

Since $\tilde{\rho}\geq \rho$, arguing as in Lemma \ref{lemm 2.6} (see, (\ref{eq 2.31})) for $\mathscr{B}_2 \supset \mathscr{B}$ we have
\begin{align}
\psi^{*}(i)\geq E^{\tilde{\pi}^{*1},\pi^{*2}}_i\bigg[e^{\int_{0}^{\tilde{\tau}({\mathscr{B}_2})}({c}(\xi_t,\tilde{\pi}^{*1}(\xi_t),\pi^{*2}(\xi_t))-\tilde{\rho})dt}\psi^{*}(\xi_{\tilde{\tau}(\mathscr{B}_2)})\bigg]~\forall i\in \mathscr{B}_2^c.\label{eq 2.52}
\end{align}
Now we choose an appropriate constant $\kappa$ (e.g., $\displaystyle \kappa = \min_{\mathscr{B}_2}\frac{\psi^{*}}{\tilde{\psi}}$), so that $(\psi^{*}-\kappa\tilde{\psi}) \geq 0$ in $\mathscr{B}_2$ and for some $\hat{i}_0\in \mathscr{B}_2,$ \,$(\psi^{*}-\kappa \tilde{\psi})(\hat{i}_0) = 0$.
From  (\ref{eq 2.51}) and (\ref{eq 2.52}), we get
\begin{align}
\psi^{*}(i) - \kappa \tilde{\psi}(i)\geq E^{\tilde{\pi}^{*1},\pi^{*2}}_i\bigg[e^{\int_{0}^{\tilde{\tau}({\mathscr{B}_2})}({c}(\xi_t,\tilde{\pi}^{*1}(\xi_t),\pi^{*2}(\xi_t))-\tilde{\rho})dt}(\psi^{*} - \kappa\tilde{\psi})(\xi_{\tilde{\tau}(\mathscr{B}_2)})\bigg]~\forall i\in \mathscr{B}_2^c.\label{eq 2.53}
\end{align} From the above expression it is easy to see that $(\psi^{*}-\kappa\tilde{\psi}) \geq 0$ in $S$. Now using (\ref{eq 2.46}), (\ref{T3.1E3.9}) and the fact that $\tilde{\rho}\geq  \rho$, we get 
  \begin{align*}
\tilde{\rho}(\psi^{*} &- \kappa \tilde{\psi})(\hat{i}_0) \\
& \geq \biggl[\sum_{j\in S}(\psi^{*} - \kappa\tilde{\psi})(j)q(j|\hat{i}_0 ,\tilde{\pi}^{*1}(\hat{i}_0),\pi^{*2}(\hat{i}_0)) + c(\hat{i}_0,\tilde{\pi}^{*1}(\hat{i}_0),\pi^{*2}(\hat{i}_0))(\psi^{*} - \kappa\tilde{\psi})(\hat{i}_0)\biggr].	
\end{align*}
This implies that
\begin{equation}\label{eq 2.54}
\sum_{j\neq \hat{i}_0}(\psi^{*} - \kappa\tilde{\psi})(j)q(j|{\hat{i}_0},\tilde{\pi}^{*1}(\hat{i}_0),\pi^{*2}(\hat{i}_0)) = 0\,.
\end{equation}
Since the Markov chain $\xi$ is irreducible under $(\tilde{\pi}^{*1},\pi^{*2})$, by (\ref{eq 2.54}), we have
 $\psi^{*}=\kappa\tilde{\psi}$ in $S$. From (\ref{eq 2.46}) and (\ref{eq 2.50}) it follows that $\tilde{\rho} = \rho$. This proves (\ref{eq 2.47}).
 \end{proof}
In the next theorem we show that any mini-max selector of (\ref{eq 2.30}) is a saddle point equilibrium.
\begin{thm}\label{theo 3.2}
Suppose Assumptions \ref{assm 2.1}, \ref{assm 2.2}, and \ref{assm 2.3} hold. Then any mini-max selector $(\pi^{*1},\pi^{*2})\in \Pi^s_1\times \Pi^s_2$ of (\ref{eq 2.30}) is a saddle point equilibrium.
\end{thm}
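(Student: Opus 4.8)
The plan is to deduce everything from the principal eigenpair $(\rho,\psi^{*})$ of Lemma~\ref{lemm 2.6}. Recall that $(\pi^{*1},\pi^{*2})$ is a saddle-point equilibrium precisely when the game has a value $J^{*}$ and $\pi^{*1},\pi^{*2}$ are optimal for players $1$ and $2$, respectively. Hence it suffices to establish
\begin{align*}
\inf_{\pi^2\in\Pi^2}J(i,c,\pi^{*1},\pi^2)\;\geq\;\rho\;\geq\;\sup_{\pi^1\in\Pi^1}J(i,c,\pi^1,\pi^{*2})\qquad\forall\, i\in S,
\end{align*}
for then, combining this with the elementary chain
\begin{align*}
\inf_{\pi^2\in\Pi^2}J(i,c,\pi^{*1},\pi^2)\;\leq\; L(i)\;\leq\; U(i)\;\leq\;\sup_{\pi^1\in\Pi^1}J(i,c,\pi^1,\pi^{*2}),
\end{align*}
forces $L(i)=U(i)=\rho$ for every $i$, so $J^{*}\equiv\rho$ is the value, $\inf_{\pi^2}J(i,c,\pi^{*1},\pi^2)=J^{*}(i)$ and $\sup_{\pi^1}J(i,c,\pi^1,\pi^{*2})=J^{*}(i)$, and $(\pi^{*1},\pi^{*2})$ is a saddle point.

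The right-hand inequality has in effect already been obtained inside the proof of Theorem~\ref{theo 3.1}: the perturbed costs $\hat c_n\geq c$ produce eigenvalues $\tilde\rho_n\downarrow\rho$ with $\sup_{\pi^1}J(i,c,\pi^1,\pi^{*2})\leq\sup_{\pi^1}J(i,\hat c_n,\pi^1,\pi^{*2})=\tilde\rho_n$ for each $i$ and each $n$, and letting $n\to\infty$ gives $\sup_{\pi^1}J(i,c,\pi^1,\pi^{*2})\leq\rho$. So I would simply invoke that.

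For the left-hand inequality I would use the selector $\pi^{*1}$ in the HJI equation. By (\ref{eq 2.46}) we have $\rho\psi^{*}(i)=\inf_{\nu\in P(B(i))}\big[\sum_{j\in S}\psi^{*}(j)q(j|i,\pi^{*1}(i),\nu)+c(i,\pi^{*1}(i),\nu)\psi^{*}(i)\big]$, so for every $\nu\in P(B(i))$ and every $i\in S$,
\begin{align*}
\sum_{j\in S}\psi^{*}(j)q(j|i,\pi^{*1}(i),\nu)+c(i,\pi^{*1}(i),\nu)\psi^{*}(i)\;\geq\;\rho\,\psi^{*}(i).
\end{align*}
Hence, along any $\pi^2\in\Pi^2$, Dynkin's formula (with the usual localization by $\tau_N:=\inf\{t\geq 0:\xi_t\notin\{1,\dots,N\}\}$ and then Fatou's lemma) shows that $t\mapsto e^{\int_0^{t}(c(\xi_s,\pi^{*1}(\xi_s),\pi^2(s))-\rho)\,ds}\psi^{*}(\xi_t)$ is a submartingale under $P^{\pi^{*1},\pi^2}_i$, so that $E^{\pi^{*1},\pi^2}_i\big[e^{\int_0^{T}(c(\xi_s,\pi^{*1}(\xi_s),\pi^2(s))-\rho)\,ds}\psi^{*}(\xi_T)\big]\geq\psi^{*}(i)$ for all $T>0$.

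The hard part will be converting this Feynman--Kac inequality into the average-cost bound $\limsup_{T\to\infty}\frac{1}{T}\log E^{\pi^{*1},\pi^2}_i\big[e^{\int_0^{T}c(\xi_t,\pi^{*1}(\xi_t),\pi^2(t))\,dt}\big]\geq\rho$, the difficulty being that $\psi^{*}$ is controlled only from above (by $V$) and need not be bounded away from $0$ on all of $S$, so the factor $\psi^{*}(\xi_T)$ cannot simply be pulled out. I would handle this by splitting $E^{\pi^{*1},\pi^2}_i[\cdots]$ according to whether $\xi_T\in\mathscr{B}$ or $\xi_T\notin\mathscr{B}$, where $\mathscr{B}$ is the finite set of Lemma~\ref{lemm 2.6}: on $\{\xi_T\in\mathscr{B}\}$ one has $\psi^{*}(\xi_T)\leq\max_{\mathscr{B}}\psi^{*}<\infty$, while on $\{\xi_T\notin\mathscr{B}\}$ one bounds $\psi^{*}(\xi_T)\leq V(\xi_T)$ and uses the Lyapunov estimates of Lemma~\ref{lemm 2.4}, together with positive recurrence of $\xi$ under Assumption~\ref{assm 2.2}, to show this term is negligible on the exponential scale; equivalently, one iterates the optional-stopping inequality over the successive return times of $\xi$ to $\mathscr{B}$, on which $\psi^{*}>0$ is bounded below as $\mathscr{B}$ is finite. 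This yields $\inf_{\pi^2}J(i,c,\pi^{*1},\pi^2)\geq\rho$ for every $i$, and the proof concludes as in the first paragraph. (Alternatively, the same lower bound follows by running the argument of Theorem~\ref{theo 3.1} with the two players interchanged---fixing $\pi^{*1}$, perturbing the cost downward, and noting that $(\rho,\psi^{*})$ already solves the resulting fixed-$\pi^{*1}$ Hamilton--Jacobi--Bellman equation---so that the associated eigenvalues increase to $\rho$.)
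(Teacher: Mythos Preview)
Your overall plan is correct, and the right-hand inequality $\sup_{\pi^1}J(i,c,\pi^1,\pi^{*2})\leq\rho$ indeed follows from the proof of Theorem~\ref{theo 3.1}. The left-hand inequality, however, is where your argument has a genuine gap.

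The direct submartingale route correctly gives $\psi^{*}(i)\leq E^{\pi^{*1},\pi^2}_i\big[e^{\int_0^T(c-\rho)\,ds}\psi^{*}(\xi_T)\big]$, but neither of your proposed conversions to a $\limsup$ bound is complete. The splitting over $\{\xi_T\in\mathscr{B}\}$ requires controlling $E\big[e^{\int_0^T(c-\rho)}V(\xi_T)I_{\{\xi_T\notin\mathscr{B}\}}\big]$, and the Lyapunov estimates of Lemma~\ref{lemm 2.4} (which concern the hitting time $\tilde{\tau}(\mathscr{B})$, not the fixed time $T$) do not supply this. The return-time iteration needs the strong Markov property, which fails for history-dependent $\pi^2\in\Pi^2$; even for stationary $\pi^2$, passing from random-time bounds $E[e^{\int_0^{\sigma_k}(c-\rho)}]\geq\text{const}$ to bounds at deterministic times $T$ is not automatic. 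Your alternative---perturbing the cost \emph{downward}---is in the wrong direction: in Theorem~\ref{theo 3.1} the \emph{upward} perturbation $\hat c_n\geq c$ is precisely what forces $\hat c_n-\tilde\rho_n\geq 0$ outside a finite set and hence the uniform positive lower bound on the eigenfunction; a downward perturbation produces no such structure, so the associated eigenvalues need not be identified with the value of the min-problem.

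The paper proceeds differently. It first shows, via the touching/comparison argument of Theorem~\ref{theo 3.1} applied to the uncontrolled chain under $(\pi^{*1},\pi^{*2})$, that $J(i,c,\pi^{*1},\pi^{*2})=\rho$ for all $i$. For the inequality $\rho\leq\inf_{\pi^2}J(i,c,\pi^{*1},\pi^2)$ it invokes the single-player ergodic risk-sensitive theory of~\cite{BP}, which guarantees that $\inf_{\pi^2\in\Pi^2}J(i,c,\pi^{*1},\pi^2)$ is attained in $\Pi^s_2$. This reduces the task to \emph{stationary} $\pi^2$, for which there is an eigenpair $(\rho^{\pi^{*1},\pi^2},\psi^{\pi^{*1},\pi^2})$ with $\rho^{\pi^{*1},\pi^2}=J(i,c,\pi^{*1},\pi^2)$ and a stochastic representation at $\tilde{\tau}(\mathscr{B})$. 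Comparing that representation with the one for $\psi^{*}$ in~(\ref{eq 2.31}), the irreducibility-based touching argument forces $\psi^{\pi^{*1},\pi^2}=\eta\psi^{*}$ for some $\eta>0$; substituting back into the two eigenvalue equations then gives $\rho\leq\rho^{\pi^{*1},\pi^2}$. The appeal to~\cite{BP} is exactly what sidesteps the limit-in-$T$ difficulty you flag for history-dependent strategies.
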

\begin{proof}
Arguing as in Lemma \ref{lemm 2.6} and Theorem \ref{theo 3.1}, there exists $(\rho^{\pi^{*1},\pi^{*2}}, \psi^{\pi^{*1},\pi^{*2}}) \in \mathbb{R}_+\times L^\infty_V$ with $\psi^{\pi^{*1},\pi^{*2}}>0$ satisfying
\begin{align}
	\rho^{\pi^{*1},\pi^{*2}} \psi^{\pi^{*1},\pi^{*2}}(i) = \bigg[\sum_{j\in S}\psi^{\pi^{*1},\pi^{*2}}(j)q(j|i,\pi^{*1}(i), \pi^{*2}(i))+c(i,\pi^{*1}(i),\pi^{*2}(i))\psi^{\pi^{*1},\pi^{*2}}(i)\bigg]. \label{T3.2E3.3}
	\end{align} 
Furthermore  $\rho^{\pi^{*1},\pi^{*2}} = J(i,c,\pi^{*1},\pi^{*2})$ and for some finite set $\mathscr{B}\supset \hat{\mathscr{K}}$ (without loss of generality denoting by the same notation)
\begin{align}
\psi^{\pi^{*1},\pi^{*2}}(i) =  E^{\pi^{*1},\pi^{*2}}_i\bigg[e^{\int_{0}^{\tilde{\tau}(\mathscr{B})}(c(\xi_t,\pi^{*1}(\xi_t),\pi^{*2}(\xi_t))-\rho^{\pi^{*1},\pi^{*2}})dt}\psi^{\pi^{*1},\pi^{*2}}(\xi_{\tilde{\tau}(\mathscr{B})})\bigg]~\forall i\in \mathscr{B}^c.\label{T3.2E3.3A}
\end{align} Thus, from (\ref{eq 2.47}) it is clear that $\rho^{\pi^{*1},\pi^{*2}} \leq \rho$\,. Now, following  similar arguments as in Theorem \ref{theo 3.1} it is easy to see that $\rho^{\pi^{*1},\pi^{*2}} = \rho$\,. This implies that $J(i,c,\pi^1,\pi^{*2}) \leq \rho^{\pi^{*1},\pi^{*2}}$ for all $\pi^1\in\Pi^1$\,.
Next, from \cite{BP} it is clear that if we consider the minimization problem $\displaystyle \min_{\pi^2\in\Pi^2} J(i,c,\pi^{*1},\pi^2)$, then the optimal control exists in the space of stationary Markov strategies. Thus to complete the proof, it is enough to show that $J(i,c,\pi^{*1},\pi^{*2})\leq J(i,c,\pi^{*1},\pi^2)$ for any $\pi^2\in \Pi^s_2$\,. If not suppose that $J(i,c,\pi^{*1},\pi^{*2})> J(i,c,\pi^{*1},\pi^2)$ for some $\pi^2\in \Pi^s_2$\,. We know that for $\pi^2\in \Pi^s_2$, there exists $(\rho^{\pi^{*1},\pi^{2}}, \psi^{\pi^{*1},\pi^{2}}) \in \mathbb{R}_+\times L^\infty_V$ with $\psi^{\pi^{*1},\pi^{2}}>0$ satisfying
\begin{align}
	\rho^{\pi^{*1},\pi^{2}} \psi^{\pi^{*1},\pi^{2}}(i) = \bigg[\sum_{j\in S}\psi^{\pi^{*1},\pi^{2}}(j)q(j|i,\pi^{*1}(i), \pi^{2}(i))+c(i,\pi^{*1}(i),\pi^{2}(i))\psi^{\pi^{*1},\pi^{2}}(i)\bigg],\label{T3.2E3.3B}
	\end{align} also we have $\rho^{\pi^{*1},\pi^{2}} = J(i,c,\pi^{*1},\pi^{2})$ and for some finite set $\mathscr{B}$ ($\supset \hat{\mathscr{K}}$)
\begin{align}
\psi^{\pi^{*1},\pi^{2}}(i) =  E^{\pi^{*1},\pi^{2}}_i\bigg[e^{\int_{0}^{\tilde{\tau}(\mathscr{B})}(c(\xi_t,\pi^{*1}(\xi_t),\pi^{2}(\xi_t))- \rho^{\pi^{*1},\pi^{2}})dt}\psi^{\pi^{*1},\pi^{2}}(\xi_{\tilde{\tau}(\mathscr{B})})\bigg]~\forall i\in \mathscr{B}^c.\label{T3.3E3.3C}
\end{align} 	
From (\ref{eq 2.31}), we deduce that
\begin{align}
		\psi^{*}(i) \leq E^{\pi^{*1},\pi^2}_i\bigg[e^{\int_{0}^{\tilde{\tau}(\mathscr{B})}(c(\xi_t,\pi^{*1}(\xi_t),\pi^2(\xi_t))-\rho)dt}\psi^{*}(\xi_{\tilde{\tau}(\mathscr{B})})\bigg], ~\forall i\in \mathscr{B}^c. \label{T3.3E3.3D}
		\end{align} Now, as in Theorem \ref{theo 3.1}, using (\ref{T3.3E3.3C}) and (\ref{T3.3E3.3D}) one can deduce that $\psi^{\pi^{*1},\pi^{2}} = \eta \psi^{*}$ for some positive constant $\eta$\,. In view (\ref{eq 2.30}) and (\ref{T3.2E3.3B}), it follows that $\rho \leq \rho^{\pi^{*1},\pi^{2}}$, i.e., $J(i,c,\pi^{*1},\pi^{*2})\leq J(i,c,\pi^{*1},\pi^2)$, which is a contradiction. This completes the proof.
\end{proof}
Next we prove the converse of the above theorem.
\begin{thm}\label{theo 3.3}
Suppose Assumptions \ref{assm 2.1}, \ref{assm 2.2}, and \ref{assm 2.3} hold. If there exists a saddle point equilibrium $(\hat{\pi}^{*1},\hat{\pi}^{*2})\in \Pi^s_1\times \Pi^s_2$\,, i.e.,
\begin{align*}
J(i,c,\pi^{1},\hat{\pi}^{*2}) \leq  J(i,c,\hat{\pi}^{*1},\hat{\pi}^{*2}) \leq J(i,c,\hat{\pi}^{*1},\pi^2)\,,
\end{align*} for all $i\in S$\,, $\pi^1\in \Pi^1$ and $\pi^2\in \Pi^2$\,.
 Then $(\hat{\pi}^{*1},\hat{\pi}^{*2})$ is a mini-max selector of (\ref{eq 2.30}).
\end{thm}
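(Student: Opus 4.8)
The plan is to show that the pair $(\hat{\pi}^{*1},\hat{\pi}^{*2})$, together with the principal eigenfunction $\psi^{*}$ of the HJI equation (\ref{eq 2.30}) furnished by Lemma \ref{lemm 2.6}, realizes the whole string of equalities (\ref{eq 2.46}); this is precisely the statement that $(\hat{\pi}^{*1},\hat{\pi}^{*2})$ is a mini-max selector of (\ref{eq 2.30}). First I would identify the saddle-point value with $\rho$. Running on the stationary pair $(\hat{\pi}^{*1},\hat{\pi}^{*2})$ the construction of Lemma \ref{lemm 2.6}/Theorems \ref{theo 3.1}--\ref{theo 3.2} (and \cite{BP}) produces $(\rho^{\hat{\pi}^{*1},\hat{\pi}^{*2}},\psi^{\hat{\pi}^{*1},\hat{\pi}^{*2}})\in\mathbb{R}_{+}\times L^{\infty}_{V}$, $\psi^{\hat{\pi}^{*1},\hat{\pi}^{*2}}>0$, with $\rho^{\hat{\pi}^{*1},\hat{\pi}^{*2}}=J(i,c,\hat{\pi}^{*1},\hat{\pi}^{*2})$ for every $i$ (so $J(\cdot,c,\hat{\pi}^{*1},\hat{\pi}^{*2})\equiv\hat{\rho}$ is a constant), an eigenequation of the form (\ref{T3.2E3.3}), and a stochastic representation of the form (\ref{T3.2E3.3A}). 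Since $(\hat{\pi}^{*1},\hat{\pi}^{*2})$ is a saddle point, $\sup_{\pi^{1}\in\Pi^{1}}J(i,c,\pi^{1},\hat{\pi}^{*2})=\hat{\rho}=\inf_{\pi^{2}\in\Pi^{2}}J(i,c,\hat{\pi}^{*1},\pi^{2})$ for all $i$; choosing $\pi^{1}=\hat{\pi}^{*1}$ and $\pi^{2}=\hat{\pi}^{*2}$ respectively and using $L(i)\le U(i)$, the lower and upper values both equal $\hat{\rho}$ at every state, and a comparison with (\ref{eq 2.47}) of Theorem \ref{theo 3.1} gives $\hat{\rho}=\rho$. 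In particular $J(i,c,\hat{\pi}^{*1},\pi^{2})\ge\rho$ for every $\pi^{2}\in\Pi^{2}$ and $J(i,c,\pi^{1},\hat{\pi}^{*2})\le\rho$ for every $\pi^{1}\in\Pi^{1}$.

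Next I would argue by contradiction that $\hat{\pi}^{*1}$ is an outer maximizing selector of (\ref{eq 2.30}). If it is not, there is a state $i_{1}$ with
\[
\inf_{\nu\in P(B(i_{1}))}\Big[\sum_{j\in S}\psi^{*}(j)q(j|i_{1},\hat{\pi}^{*1}(i_{1}),\nu)+c(i_{1},\hat{\pi}^{*1}(i_{1}),\nu)\psi^{*}(i_{1})\Big]<\rho\psi^{*}(i_{1}).
\]
Choosing (by compactness of the $B(i)$ and Assumption \ref{assm 2.3}) a measurable minimizing selector $\nu^{\sharp}\in\Pi^{s}_{2}$ of $\nu\mapsto\sum_{j}\psi^{*}(j)q(j|i,\hat{\pi}^{*1}(i),\nu)+c(i,\hat{\pi}^{*1}(i),\nu)\psi^{*}(i)$ and using the sup-inf form of (\ref{eq 2.30}) one gets
\[
\rho\psi^{*}(i)\ \ge\ \sum_{j\in S}\psi^{*}(j)q(j|i,\hat{\pi}^{*1}(i),\nu^{\sharp}(i))+c(i,\hat{\pi}^{*1}(i),\nu^{\sharp}(i))\psi^{*}(i)\qquad\forall i\in S,
\]
with strict inequality at $i_{1}$; that is, $\psi^{*}$ is a strict super-solution (with eigenvalue $\rho$) of the eigenequation of the stationary pair $(\hat{\pi}^{*1},\nu^{\sharp})$. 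As explained in the last paragraph below this forces $\rho>J(i,c,\hat{\pi}^{*1},\nu^{\sharp})$, contradicting $J(i,c,\hat{\pi}^{*1},\nu^{\sharp})\ge\rho$ from the first paragraph. Hence $\inf_{\nu}[\sum_{j}\psi^{*}(j)q(j|i,\hat{\pi}^{*1}(i),\nu)+c(i,\hat{\pi}^{*1}(i),\nu)\psi^{*}(i)]=\rho\psi^{*}(i)$ for all $i$. The mirror-image argument --- using a maximizing selector $\mu^{\sharp}\in\Pi^{s}_{1}$ of $\mu\mapsto\sum_{j}\psi^{*}(j)q(j|i,\mu,\hat{\pi}^{*2}(i))+c(i,\mu,\hat{\pi}^{*2}(i))\psi^{*}(i)$, the inf-sup form of (\ref{eq 2.30}), and the bound $J(i,c,\mu^{\sharp},\hat{\pi}^{*2})\le\rho$ --- shows $\sup_{\mu}[\sum_{j}\psi^{*}(j)q(j|i,\mu,\hat{\pi}^{*2}(i))+c(i,\mu,\hat{\pi}^{*2}(i))\psi^{*}(i)]=\rho\psi^{*}(i)$ for all $i$. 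These two identities, together with the two equalities already present in (\ref{eq 2.30}), are exactly (\ref{eq 2.46}); so $(\hat{\pi}^{*1},\hat{\pi}^{*2})$ is a mini-max selector of (\ref{eq 2.30}).

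It remains to prove the comparison fact used above, and this is the main obstacle. In the super-solution case: the stationary pair $(\hat{\pi}^{*1},\nu^{\sharp})$ has its own eigenpair $(\rho^{\hat{\pi}^{*1},\nu^{\sharp}},\psi^{\hat{\pi}^{*1},\nu^{\sharp}})$, $\psi^{\hat{\pi}^{*1},\nu^{\sharp}}>0$, with $\rho^{\hat{\pi}^{*1},\nu^{\sharp}}=J(i,c,\hat{\pi}^{*1},\nu^{\sharp})$ and the stochastic representation (\ref{T3.2E3.3A}) over $\tilde{\tau}(\mathscr{B})$ for a large finite set $\mathscr{B}\supset\hat{\mathscr{K}}$ that contains $i_{1}$. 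Suppose $\rho\le\rho^{\hat{\pi}^{*1},\nu^{\sharp}}$. Then $\psi^{*}$ is a super-solution of the $\rho^{\hat{\pi}^{*1},\nu^{\sharp}}$-eigenequation of $(\hat{\pi}^{*1},\nu^{\sharp})$ which is strict at $i_{1}$; applying Dynkin's formula to $\psi^{*}$ along $(\hat{\pi}^{*1},\nu^{\sharp})$ up to $\tilde{\tau}(\mathscr{B})$, together with the limiting arguments of Lemma \ref{lemm 2.4} (using $\psi^{*}\le V$), yields $\psi^{*}(i)\ge E^{\hat{\pi}^{*1},\nu^{\sharp}}_{i}\big[e^{\int_{0}^{\tilde{\tau}(\mathscr{B})}(c(\xi_{t},\hat{\pi}^{*1}(\xi_{t}),\nu^{\sharp}(\xi_{t}))-\rho^{\hat{\pi}^{*1},\nu^{\sharp}})\,dt}\psi^{*}(\xi_{\tilde{\tau}(\mathscr{B})})\big]$ on $\mathscr{B}^{c}$; combining with the corresponding identity (\ref{T3.2E3.3A}) for $\psi^{\hat{\pi}^{*1},\nu^{\sharp}}$ and setting $w:=\psi^{*}-\kappa\psi^{\hat{\pi}^{*1},\nu^{\sharp}}$ with $\kappa=\min_{\mathscr{B}}(\psi^{*}/\psi^{\hat{\pi}^{*1},\nu^{\sharp}})$ shows $w\ge 0$ on $\mathscr{B}$, hence on all of $S$, with $w(\hat{i}_{0})=0$ for some $\hat{i}_{0}\in\mathscr{B}$. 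Subtracting $\kappa$ times the eigenequation from the super-solution inequality at $\hat{i}_{0}$, using $w(\hat{i}_{0})=0$, gives $0\ge\sum_{j\neq\hat{i}_{0}}w(j)q(j|\hat{i}_{0},\hat{\pi}^{*1}(\hat{i}_{0}),\nu^{\sharp}(\hat{i}_{0}))$, so $w$ vanishes on the one-step successors of $\hat{i}_{0}$; iterating along the transitions of the chain $(\hat{\pi}^{*1},\nu^{\sharp})$, which is irreducible by Assumption \ref{assm 2.2}, forces $w\equiv 0$, i.e. $\psi^{*}=\kappa\psi^{\hat{\pi}^{*1},\nu^{\sharp}}$ --- but then the super-solution inequality is an equality everywhere, contradicting its strictness at $i_{1}$. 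Therefore $\rho>\rho^{\hat{\pi}^{*1},\nu^{\sharp}}=J(i,c,\hat{\pi}^{*1},\nu^{\sharp})$; the sub-solution version needed for $\hat{\pi}^{*2}$ is identical with $\kappa=\max_{\mathscr{B}}(\cdot)$ and $w\le 0$. This touching/irreducibility device is the same one already used in Theorem \ref{theo 3.1} (cf. (\ref{eq 2.51})), so no new idea enters; the delicate point is carrying out the Dynkin/Fatou/dominated-convergence passage to the limit uniformly in the exhausting sets, which relies on the Lyapunov bounds of Lemma \ref{lemm 2.4} and on choosing $\mathscr{B}$ so that $c-\rho$ is dominated by $\hat{\ell}$ (respectively lies below $\hat{\gamma}$) outside $\mathscr{B}$, the two cases of Assumption \ref{assm 2.2} being treated separately as throughout.
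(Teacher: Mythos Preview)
Your argument is correct and rests on the same touching/irreducibility device as the paper (cf.\ the step around (\ref{eq 2.53})--(\ref{eq 2.54})), but the two proofs are organised differently. The paper proceeds \emph{directly}: fixing $\hat{\pi}^{*1}$, it invokes the one-player eigenpair $(\rho_{\hat{\pi}^{*1}},\psi_{\hat{\pi}^{*1}}^{*})$ solving
\[
\rho_{\hat{\pi}^{*1}}\psi_{\hat{\pi}^{*1}}^{*}(i)=\inf_{\nu\in P(B(i))}\Bigl[\sum_{j}\psi_{\hat{\pi}^{*1}}^{*}(j)q(j|i,\hat{\pi}^{*1}(i),\nu)+c(i,\hat{\pi}^{*1}(i),\nu)\psi_{\hat{\pi}^{*1}}^{*}(i)\Bigr],
\]
identifies $\rho_{\hat{\pi}^{*1}}=\rho$ from $\inf_{\pi^{2}}J(i,c,\hat{\pi}^{*1},\pi^{2})=\rho$, and then compares $\psi_{\hat{\pi}^{*1}}^{*}$ with $\psi^{*}$ along the pair $(\hat{\pi}^{*1},\pi^{*2})$ (with $\pi^{*2}$ any mini-max selector of (\ref{eq 2.30})) to obtain $\psi^{*}=\hat{\eta}\psi_{\hat{\pi}^{*1}}^{*}$; substituting back turns the one-player equation into the first line of (\ref{eq 2.46}) with no contradiction step. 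You instead argue by contradiction, introduce a specific minimizing selector $\nu^{\sharp}$, and compare $\psi^{*}$ with the \emph{fixed-pair} eigenfunction $\psi^{\hat{\pi}^{*1},\nu^{\sharp}}$. Both routes use the same Lyapunov/Dynkin estimates and the same propagation of zeros through irreducibility; the paper's version additionally delivers the proportionality $\psi^{*}\propto\psi_{\hat{\pi}^{*1}}^{*}$ as a structural byproduct, while your version avoids introducing the one-player HJI equation at the cost of tracking a strict inequality at a distinguished state $i_{1}$.
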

\begin{proof}
From Theorem \ref{theo 3.2}, we deduce that
\begin{align*}
\rho = \inf_{\pi^2\in\Pi^2}\sup_{\pi^1\in\Pi^1}J(i,c,\pi^{1},\pi^{2}) & \leq \sup_{\pi^1\in\Pi^1}J(i,c,\pi^{1},\hat{\pi}^{*2}) \leq J(i,c,\hat{\pi}^{*1},\hat{\pi}^{*2}) \\
&\leq \inf_{\pi^2\in\Pi^2}J(i,c,\hat{\pi}^{*1},\pi^{2}) \leq \sup_{\pi^1\in\Pi^1} \inf_{\pi^2\in\Pi^2} J(i,c,\pi^{1},\pi^{2}) = \rho\,.
\end{align*}
This implies that $\rho = J(i,c,\hat{\pi}^{*1},\hat{\pi}^{*2})$\, and $\displaystyle \rho = \inf_{\pi^2\in\Pi^2}J(i,c,\hat{\pi}^{*1},\pi^{2})$\,. Arguing as in Lemma \ref{lemm 2.6} and using Theorem \ref{theo 3.1}, it follows that for $\hat{\pi}^{*1}\in\Pi_1^s$ there exists $(\rho_{\hat{\pi}^{*1}}, \psi_{\hat{\pi}^{*1}}^{*})\in \mathbb{R}_+\times L^\infty_V$ with $\psi_{\hat{\pi}^{*1}}^{*} > 0$ such that
\begin{align}
	\rho_{\hat{\pi}^{*1}}\psi_{\hat{\pi}^{*1}}^{*}(i)=\inf_{\nu\in P(B(i))}\bigg[\sum_{j\in S}\psi_{\hat{\pi}^{*1}}^{*}(j)q(j|i,\hat{\pi}^{*1}(i),\nu)+c(i,\hat{\pi}^{*1}(i),\nu)\psi_{\hat{\pi}^{*1}}^{*}(i)\bigg],\label{T3.3A}
	\end{align}
 and $\rho_{\hat{\pi}^{*1}} = \rho$ (since $\displaystyle \rho = \inf_{\pi^2\in\Pi^2}J(i,c,\hat{\pi}^{*1},\pi^{2})$). Let $({\pi}^{*1},{\pi}^{*2})$ be any mini-max selector of (\ref{eq 2.30}). Then form the above, we get
\begin{align}
	\rho_{\hat{\pi}^{*1}}\psi_{\hat{\pi}^{*1}}^{*}(i) \leq \bigg[\sum_{j\in S}\psi_{\hat{\pi}^{*1}}^{*}(j)q(j|i,\hat{\pi}^{*1}(i),\pi^{*2}(i))+c(i,\hat{\pi}^{*1}(i),\pi^{*2}(i))\psi_{\hat{\pi}^{*1}}^{*}(i)\bigg].\label{T3.3B}
	\end{align} Again arguing as in Lemma \ref{lemm 2.6}, for some $\mathscr{B} \supset \hat{\mathscr{K}}$ we have
\begin{align}
\psi_{\hat{\pi}^{*1}}^{*}(i) \leq  E^{\hat{\pi}^{*1},\pi^{*2}}_i\bigg[e^{\int_{0}^{\tilde{\tau}(\mathscr{B})}(c(\xi_t,\hat{\pi}^{*1}(\xi_t),\pi^{*2}(\xi_t))-\rho)dt}\psi_{\hat{\pi}^{*1}}^{*}(\xi_{\tilde{\tau}(\mathscr{B})})\bigg]~\forall i\in \mathscr{B}^c.\label{T3.3C}
\end{align}
Since, $({\pi}^{*1},{\pi}^{*2})$ is a mini-max selector of (\ref{eq 2.30}), we have
\begin{align*}
	\rho\psi^{*}(i) \geq \bigg[\sum_{j\in S}\psi^{*}(j)q(j|i,\hat{\pi}^{*1}(i),\pi^{*2}(i))+c(i,\hat{\pi}^{*1}(i),\pi^{*2}(i))\psi^{*}(i)\bigg].
	\end{align*}
	Thus, by applying Dynkin's formula and Fatou's lemma, we obtain
\begin{align}
\psi^{*}(i) \geq  E^{\hat{\pi}^{*1},\pi^{*2}}_i\bigg[e^{\int_{0}^{\tilde{\tau}(\mathscr{B})}(c(\xi_t,\hat{\pi}^{*1}(\xi_t),\pi^{*2}(\xi_t))-\rho)dt}\psi^{*}(\xi_{\tilde{\tau}(\mathscr{B})})\bigg]~\forall i\in \mathscr{B}^c.\label{T3.3D}
\end{align}	
Using (\ref{T3.3C}) and (\ref{T3.3D}), and following the arguments as in Theorem \ref{theo 3.1} one can show that $\psi^{*} = \hat{\eta}\psi_{\hat{\pi}^{*1}}^{*}$ for some positive constant $\hat{\eta}$. Therefore, combining (\ref{eq 2.30}) and (\ref{T3.3A}) it is easy to see that $\hat{\pi}^{*1}$ is an outer maximizing selector of (\ref{eq 2.30}). By  similar arguments we can show that $\hat{\pi}^{*2}$ is an outer minimizing selector of (\ref{eq 2.30}). This completes the proof.
\end{proof}
 \section{ Example}
In this section an illustrative example is presented. In our model the transition rate is unbounded, and the cost rate is nonnegative and unbounded.
\begin{example}
	Consider a controlled birth-death system in which the state variable denotes the total population at each time $t\geq 0$. In this system there are `natural' arrival and departure rates, say $\hat{\mu}$ and $\hat{\lambda}$, respectively. Here player 1 controls arrival parameters $\hat{h}_1$ and player 2 controls departure parameters $\hat{h}_2$. At any time $t$, when the state of the system is $i\in S:=\{0,1,\cdots\}$, player 1 takes an action $a$ from a given set $A(i)$ (which is a compact subset of some Polish space $A$). This action may increase $(\hat{h}_1(i,a)\geq 0)$ or decrease $(\hat{h}_1(i,a)\leq 0)$, the arrival rate and these actions result in a payoff denoted by $\hat{c}_1(i,a)$ per unit time. Similarly, if the state is $i\in \{1,2,\cdots\}$, player 2 takes an action $b$ from a set $B(i)$ (which is a compact subset of a Polish space $B$) to increase $(\hat{h}_2(i,b)\geq 0)$ or to decrease $(\hat{h}_2(i,b)\leq 0)$, the departure rate and these actions produce a payoff denoted by $\hat{c}_2(i,b)$ per unit time. Also, in addition, assume that player 1 `owns' the system and he/she gets a reward $\hat{p}\cdot i$ for each unit of time during which the system remains in the state $i\in S$, where $\hat{p}>0$ is a fixed reward fee per customer. We also, assume that when the state of the system reaches at state $i=0$, any number of arrivals may occur. When there is no customer in the system, (i.e., $i=0$), control of departure is unnecessary.
	
	We next formulate this model as a continuous-time Markov game. The corresponding transition rate $q(j|i,a,b)$ and reward rate $c(i,a,b)$ for player 1 are given as follows: for $(0,a,b)\in K$ ($K$ as in the game model (\ref{eq 2.1})). We take
	\begin{align}
	q(j|0,a,b)=\frac{\alpha}{(j+3)^4}~\text{ for all}~j\geq 1,~\text{such that}~\sum_{j\in S}q(j|0,a,b)=0,\label{eq 4.1}
	\end{align}
	where $\alpha>0$ is some constant so that $q(0|0,a,b)\leq -3$. Also for $(i,a,b)\in K$ with $i\geq 1$,
	\begin{align}
	q(j|i,a,b)= \left\{ \begin{array}{lll}\displaystyle{}&\hat{\lambda} (i+3)^2+\hat{h}_2(i,b),
	~\text{if}~j=i-1\nonumber\\
	&-\hat{\mu} i-\hat{\lambda} (i+3)^2-\hat{h}_1(i,a)-\hat{h}_2(i,b),~\text{if}~ j=i\\
	& \hat{\mu} i+\hat{h}_1(i,a),~\text{if}~j=i+1\\
	&0,~\text{otherwise}\displaystyle{}.
	\end{array}\right.
	\end{align}
	\begin{align}
	c(i,a,b):=\hat{p}\cdot i-\hat{c}_1(i,a)+\hat{c}_2(i,b)~\text{ for }~(i,a,b)\in K.\label{eq 4.2}
	\end{align}
	We now explore conditions under which there exists a pair of optimal strategies. To do so, we make the following assumptions.
	\begin{enumerate}
		\item [(I)] Let $\hat{\lambda}\geq \max\{\hat{\mu},2\}$, $\hat{\mu}i+\hat{h}_1(i,a)>0$, and $\hat{\lambda} (i+3)^2+\hat{h}_2(i,b)>0$ for all $(i,a,b)\in K$ with $i\geq 1$; and  assume that $\hat{h}_1(0,a)>0$ and $\hat{h}_2(0,b)=0$ for all $(a,b)\in A(i)\times B(i).$
		
		\item [(II)] The functions $\hat{h}_1(\cdot,\cdot):S\times A\rightarrow [-\hat{\mu},\hat{\mu}]$, $\hat{h}_2(\cdot,\cdot):S\times B\rightarrow [-\hat{\lambda},\hat{\lambda}]$, $\hat{c}_1(i,a)$, and $\hat{c}_2(i,b)$ are continuous with their respective variables for each fixed $i\in S$.
		Also, assume that $\displaystyle \min_{(a,b)\in A(i)\times B(i)}[\hat{c}_1(i,a)-\hat{c}_2(i,b)]$ is norm-like function and $\hat{p}\cdot i-\hat{c}_1(i,a)+\hat{c}_2(i,b)\geq 0$ for $(i,a,b)\in K$. Here we take $\hat{p}<1$.
	\end{enumerate}
\end{example}

\begin{proposition}\label{Prop 6.1}
	Under conditions (I)-(II), the above controlled birth-death system satisfies the Assumptions \ref{assm 2.1}, \ref{assm 2.2}, and \ref{assm 2.3}. Hence by Theorem {\ref{theo 3.2}}, there exists a pair of optimal strategies.
\end{proposition}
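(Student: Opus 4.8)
The plan is to check that the controlled birth--death model of the Example satisfies Assumptions \ref{assm 2.1}, \ref{assm 2.2} and \ref{assm 2.3}, and then to quote Lemma \ref{lemm 2.6} and Theorem \ref{theo 3.2}. Once the assumptions hold, Lemma \ref{lemm 2.6} yields a solution $(\rho,\psi^{*})\in\mathbb{R}_{+}\times L^{\infty}_{V}$ of the HJI equation (\ref{eq 2.30}), a measurable selection argument (exactly as in the proof of Lemma \ref{lemm 2.6}) produces a mini-max selector $(\pi^{*1},\pi^{*2})\in\Pi^{s}_{1}\times\Pi^{s}_{2}$ of (\ref{eq 2.30}), and Theorem \ref{theo 3.2} then says that $(\pi^{*1},\pi^{*2})$ is a saddle-point equilibrium, i.e.\ a pair of optimal strategies. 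Of the three assumptions, \ref{assm 2.3} is immediate from~(II): $q(j|i,\cdot,\cdot)$ and $c(i,\cdot,\cdot)$ are continuous for each fixed $i$ (for $i=0$ they are even constant in $(a,b)$), and $\sum_{j\in S}V(j)q(j|i,\cdot,\cdot)$ is continuous since it is a finite sum for $i\ge1$ and a constant for $i=0$; part~(iii) holds with reference state $i_{0}=0$, because $q(j|0,a,b)=\alpha(j+3)^{-4}>0$ for every $j\neq0$ and all $(a,b)$.

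For the Lyapunov requirements I would take $\tilde V(i)=V(i)=(i+3)^{2}\ge1$. For $i\ge1$, since only the transitions $i\to i\pm1$ occur,
\begin{align*}
\sum_{j\in S}V(j)q(j|i,a,b)&=\bigl(\hat\lambda(i+3)^{2}+\hat h_{2}(i,b)\bigr)\bigl(V(i-1)-V(i)\bigr)+\bigl(\hat\mu i+\hat h_{1}(i,a)\bigr)\bigl(V(i+1)-V(i)\bigr)\\
&=-\bigl(\hat\lambda(i+3)^{2}+\hat h_{2}(i,b)\bigr)(2i+5)+\bigl(\hat\mu i+\hat h_{1}(i,a)\bigr)(2i+7).
\end{align*}
Using $\hat h_{1}\in[-\hat\mu,\hat\mu]$ and $\hat h_{2}\in[-\hat\lambda,\hat\lambda]$ from~(II), this gives
\begin{align*}
\sup_{(a,b)\in A(i)\times B(i)}\sum_{j\in S}V(j)q(j|i,a,b)\le-\hat\lambda\bigl((i+3)^{2}-1\bigr)(2i+5)+\hat\mu(i+1)(2i+7)=-2\hat\lambda\,i^{3}+O(i^{2}),
\end{align*}
while for $i=0$ the left-hand side equals the finite constant $\alpha\sum_{j\ge1}(j+3)^{-2}+9\,q(0|0,a,b)$ --- the heavy tail $\alpha(j+3)^{-4}$ of the jump out of $0$ is exactly what forces $V$ to grow no faster than a polynomial of degree $\le2$, so an exponential $V$ is not available here. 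Hence $\sup_{(a,b)}\sum_{j}V(j)q(j|i,a,b)$ is bounded above by a constant uniformly in $i$, which yields Assumption \ref{assm 2.1}(i) (e.g.\ $b_{0}=1$, with $b_{1}$ that constant); and $q^{*}(i)=\sup_{(a,b)}\bigl(\hat\mu i+\hat\lambda(i+3)^{2}+\hat h_{1}(i,a)+\hat h_{2}(i,b)\bigr)\le b_{2}(i+3)^{2}$ for a suitable $b_{2}>0$, which is Assumption \ref{assm 2.1}(ii).

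For Assumption \ref{assm 2.2} we are in case~(b), since the cost may be unbounded. I would set $\hat\ell(i):=\hat p\,i+1+K_{0}$, where $K_{0}\ge0$ is a lower bound for the norm-like function $i\mapsto\min_{(a,b)}[\hat c_{1}(i,a)-\hat c_{2}(i,b)]$ from~(II); then $\hat\ell$ is norm-like, and since $\max_{(a,b)}c(i,a,b)=\hat p\,i-\min_{(a,b)}[\hat c_{1}(i,a)-\hat c_{2}(i,b)]$, the function $\hat\ell(i)-\max_{(a,b)}c(i,a,b)=1+K_{0}+\min_{(a,b)}[\hat c_{1}(i,a)-\hat c_{2}(i,b)]$ is norm-like as well. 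It then remains to produce the blanket bound (\ref{eq 2.6}): from the estimate above, for $i\ge1$,
\begin{align*}
\sup_{(a,b)}\sum_{j\in S}V(j)q(j|i,a,b)+\hat\ell(i)V(i)\le-2\hat\lambda\,i^{3}+(\hat p+1+K_{0})\,i^{3}+O(i^{2})=-(2\hat\lambda-\hat p)\,i^{3}+O(i^{2}),
\end{align*}
and because $\hat\lambda\ge\max\{\hat\mu,2\}$ and $\hat p<1$ we have $2\hat\lambda-\hat p\ge3>0$ (and $\hat\lambda\ge\hat\mu$ controls the remaining quadratic birth contribution), so the right-hand side is $\le0$ for all $i$ outside a finite set $\hat{\mathscr{K}}=\{0,1,\dots,M\}$; enlarging $C$ to dominate the finitely many values on $\hat{\mathscr{K}}$ (including the $i=0$ term) yields (\ref{eq 2.6}).

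Finally, under any $(\pi^{1},\pi^{2})\in\Pi^{s}_{1}\times\Pi^{s}_{2}$ the chain is irreducible: by~(I), for $i\ge1$ both the birth rate $\hat\mu i+\hat h_{1}(i,a)$ and the death rate $\hat\lambda(i+3)^{2}+\hat h_{2}(i,b)$ are strictly positive, and from state $0$ every $j\ge1$ is reached at the positive rate $\alpha(j+3)^{-4}$, so all states communicate; together with $c\ge0$ from~(II) this completes Assumption \ref{assm 2.2}. The crux of the argument --- and the only place where real care is needed --- is the drift estimate: one must ensure that the cubic negative drift produced by the quadratic death rate beats both the quadratic birth drift and the term $\hat\ell(i)V(i)$, and this is exactly what the hypotheses $\hat\lambda\ge\max\{\hat\mu,2\}$ and $\hat p<1$ are designed to guarantee; the summability constraint on the $0\to j$ jump tail, which rules out an exponential Lyapunov function, is a secondary and milder point to keep track of.
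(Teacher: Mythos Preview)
Your verification follows the same blueprint as the paper's: both take the Lyapunov function $V(i)=(i+3)^2$, check the drift inequality for Assumption~\ref{assm 2.2}(b), and verify Assumptions~\ref{assm 2.1} and~\ref{assm 2.3} directly. The principal difference is the choice of $\hat\ell$. The paper takes $\hat\ell(i)=i+3$ and carries out an explicit chain of inequalities (using $\hat\lambda\ge 2$ to pass from $-\tfrac{\hat\lambda}{2}(i+3)V(i)$ to $-(i+3)V(i)$) so that the bound $\sum_j V(j)q(j|i,a,b)\le -\hat\ell(i)V(i)$ holds for \emph{every} $i\ge1$, yielding the minimal $\hat{\mathscr K}=\{0\}$; you instead take $\hat\ell(i)=\hat p\,i+1+K_0$ and argue asymptotically via the leading cubic term, allowing $\hat{\mathscr K}$ to be a larger unspecified finite set. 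Both routes are legitimate; the paper's explicit computation buys a sharper constant, while your asymptotic argument makes transparent exactly which structural features ($\hat\lambda\ge 2$, $\hat p<1$) are driving the estimate. Two small slips are worth tidying: (i) the norm-like function $i\mapsto\min_{(a,b)}[\hat c_1(i,a)-\hat c_2(i,b)]$ need not be bounded below by a \emph{nonnegative} constant---indeed at $i=0$ condition~(II) forces $\hat c_1(0,a)-\hat c_2(0,b)\le 0$---so $K_0$ may be negative and your $\hat\ell$ should be truncated on a finite set to remain in $\mathbb R_+$; (ii) in your displayed drift estimate the term $(\hat p+1+K_0)\,i^{3}$ for $\hat\ell(i)V(i)$ should be $\hat p\,i^{3}+O(i^{2})$, though your final conclusion $-(2\hat\lambda-\hat p)\,i^{3}+O(i^{2})$ is correct.
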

\begin{proof}
	Consider the Lyapunov function given by 
	\begin{align*}
	V(i):=(i+3)^2~\text{ for }~i\in S.
	\end{align*}
	We have $V(i)\geq 1$ for all $i\in S$. Now for each $i\geq 1$, and $(a,b)\in A(i)\times B(i)$, we have
	\interdisplaylinepenalty=0
	\begin{align}
	\sum_{j\in S}q(j|i,a,b)V(j)&=q(i-1|i,a,b)V(i-1)+V(i)q(i|i,a,b)+V(i+1)q(i+1|i,a,b)\nonumber\\
	&=(i+2)^2[\hat{\lambda}(i+3)^2+\hat{h}_2(i,b)]-(i+3)^2[\hat{\mu}i+\hat{\lambda} (i+3)^2+\hat{h}_1(i,a)+\hat{h}_2(i,b)]\nonumber\\
	&\quad+(i+4)^2[\hat{\mu}i+\hat{h}_1(i,a)]\nonumber\\
	&=-[\hat{\lambda}(i+3)^2+\hat{h}_2(i,b)](2i+5)+(\hat{\mu}i+\hat{h}_1(i,a))(2i+7)\nonumber\\
	&=-\hat{\lambda}(i+3)^2(i+3+i+2)-\hat{h}_2(i,b)(2i+5)+(\hat{\mu}i+\hat{h}_1(i,a))(2i+7)\nonumber\\
	&=-\hat{\lambda}(i+3)^2(i+3)-\hat{\lambda}(i+3)^2(i+2)-\hat{h}_2(i,b)(2i+5)+(\hat{\mu}i+\hat{h}_1(i,a))(2i+7)\nonumber\\
	&\leq -\hat{\lambda}(i+3)(i+3)^2-\hat{\lambda} (i+3)^2(i+2)+\hat{\lambda}(2i+5)+\hat{\lambda}i(2i+7)+\hat{\lambda}(2i+7)\nonumber\\
	&~(\text{ since} -h_2(i,b)\leq \hat{\lambda},~ \hat{\mu}\leq \hat{\lambda},~ h_1(i,a)\leq \hat{\mu}\leq \hat{\lambda}, \text{ by conditions (I) and (II)})\nonumber\\
	&=-\frac{\hat{\lambda}}{2}(i+3)V(i)+\biggl\{-\frac{\hat{\lambda}}{2}(i+3)(i+3)^2-\hat{\lambda} (i+3)^2(i+2)+\hat{\lambda}(2i+5)\nonumber\\
	&+\hat{\lambda}i(2i+7)+\hat{\lambda}(2i+7)\biggr\}\nonumber\\
	&\leq -\frac{\hat{\lambda}}{2}(i+3)V(i)~(\text{ since the term within the second bracket is negative})\nonumber\\
	&\leq -(i+3)V(i)~(\text{ by condition (I), since } \hat{\lambda}\geq 2)\nonumber\\
	&=-\hat{\ell}(i)V(i)\label{eq 4.3}
	\end{align}
	where $\hat{\ell}(i)=i+3$.
	For $i=0$,
	\begin{align}
	\sum_{j\in S}q(j|i,a,b)V(j)&=9q(0|0,a,b)+\sum_{j\geq 1}q(j|0,a,b)(j+3)^2\nonumber\\
	&\leq CI_{\hat{\mathscr{K}}}(0)-\hat{\ell}(0)V(0),\label{eq 4.4}
	\end{align}
	where $\hat{\mathscr{K}}=\{0\}$ and $C=\frac{\alpha\pi^2}{6}$.
	Now
	\begin{align}
	\sup_{(a,b)\in A(i)\times B(i)}q(i,a,b)&\leq \sup_{(a,b)\in A(i)\times B(i)}\biggl\{\hat{\mu} i+\hat{\lambda} (i+3)^2+|\hat{h}_1(i,a)|+|\hat{h}_2(i,b)|\biggr\}\nonumber\\
	&\leq \hat{\lambda} i+2\hat{\lambda}+\hat{\lambda} (i+3)^2\nonumber\\
	&\leq 2(i+3)^2\hat{\lambda}
	\leq b_2 V(i)~\forall~i\geq 1,\label{eq 4.5}
	\end{align}
	where $b_2=\max\{2\hat{\lambda},\displaystyle \sum_{j\geq 1}\frac{\alpha}{{(j+3)}^4}\}$.
	From (\ref{eq 4.3}) and (\ref{eq 4.4}), for all $i\in S$, we get
	\begin{align}
	\sum_{j\in S}q(j|i,a,b)V(j)\leq b_0V(i)+b_1,\label{eq 4.6}
	\end{align}
	where $b_1=C$ and $b_0=1$.
	Now
	\begin{align}
	\hat{\ell}(i)-\max_{(a,b)\in A(i)\times B(i)}c(i,a,b)=3+(1-\hat{p})i+\min_{(a,b)\in A(i)\times B(i)}[\hat{c}_1(i,a)-\hat{c}_2(i,b)].\label{eq 4.7}
	\end{align}
	
	From (\ref{eq 4.5}) and (\ref{eq 4.6}), Assumption \ref{assm 2.1} is verified.
	By the condition (II), equations (\ref{eq 4.3}), (\ref{eq 4.4}), (\ref{eq 4.7}), it is easy to see that Assumption \ref{assm 2.2} is verified.
	By (\ref{eq 4.1}), (\ref{eq 4.2}), the condition (II), and the definition of $q$ as defined above,  Assumption \ref{assm 2.3} (i) is verified. By (\ref{eq 4.3}) and (\ref{eq 4.4}), Assumption \ref{assm 2.3} (ii) is verified.
	Hence by Theorem {\ref{theo 3.2}}, it follows that there exists an optimal pair of stationary strategies for this controlled Birth-Death process.
\end{proof}

  		\bibliographystyle{elsarticle-num}

	 \nocite{*}
	\bibliographystyle{plain}
	\end{document}